\documentclass{amsart}
\usepackage{amssymb}
\usepackage{amscd}
\usepackage{verbatim}
\usepackage{epsfig}

\begin{document}
\newcommand\Mand{\ \text{and}\ }
\newcommand\Mor{\ \text{or}\ }
\newcommand\Mfor{\ \text{for}\ }
\newcommand\Real{\mathbb{R}}
\newcommand\RR{\mathbb{R}}
\newcommand\im{\operatorname{Im}}
\newcommand\re{\operatorname{Re}}
\newcommand\sign{\operatorname{sign}}
\newcommand\sphere{\mathbb{S}}
\newcommand\BB{\mathbb{B}}
\newcommand\HH{\mathbb{H}}
\newcommand\ZZ{\mathbb{Z}}
\newcommand\codim{\operatorname{codim}}
\newcommand\Sym{\operatorname{Sym}}
\newcommand\End{\operatorname{End}}
\newcommand\Span{\operatorname{span}}
\newcommand\Ran{\operatorname{Ran}}
\newcommand\ep{\epsilon}
\newcommand\Cinf{\cC^\infty}
\newcommand\dCinf{\dot \cC^\infty}
\newcommand\CI{\cC^\infty}
\newcommand\dCI{\dot \cC^\infty}
\newcommand\Cx{\mathbb{C}}
\newcommand\Nat{\mathbb{N}}
\newcommand\dist{\cC^{-\infty}}
\newcommand\ddist{\dot \cC^{-\infty}}
\newcommand\pa{\partial}
\newcommand\Card{\mathrm{Card}}
\renewcommand\Box{{\square}}
\newcommand\WF{\mathrm{WF}}
\newcommand\WFh{\mathrm{WF}_\semi}
\newcommand\WFb{\mathrm{WF}_\bl}
\newcommand\Vf{\mathcal{V}}
\newcommand\Vb{\mathcal{V}_\bl}
\newcommand\Vz{\mathcal{V}_0}
\newcommand\Hb{H_{\bl}}
\newcommand\Ker{\mathrm{Ker}}
\newcommand\Range{\mathrm{Ran}}
\newcommand\Hom{\mathrm{Hom}}
\newcommand\Id{\mathrm{Id}}
\newcommand\sgn{\operatorname{sgn}}
\newcommand\ff{\mathrm{ff}}
\newcommand\tf{\mathrm{tf}}
\newcommand\supp{\operatorname{supp}}
\newcommand\vol{\mathrm{vol}}
\newcommand\Diff{\mathrm{Diff}}
\newcommand\Diffd{\mathrm{Diff}_{\dagger}}
\newcommand\Diffs{\mathrm{Diff}_{\sharp}}
\newcommand\Diffb{\mathrm{Diff}_\bl}
\newcommand\DiffbI{\mathrm{Diff}_{\bl,I}}
\newcommand\Diffbeven{\mathrm{Diff}_{\bl,\even}}
\newcommand\Diffz{\mathrm{Diff}_0}
\newcommand\Psih{\Psi_{\semi}}
\newcommand\Psihcl{\Psi_{\semi,\cl}}
\newcommand\Psib{\Psi_\bl}
\newcommand\Psibc{\Psi_{\mathrm{bc}}}
\newcommand\TbC{{}^{\bl,\Cx} T}
\newcommand\Tb{{}^{\bl} T}
\newcommand\Sb{{}^{\bl} S}
\newcommand\Lambdab{{}^{\bl} \Lambda}
\newcommand\zT{{}^{0} T}
\newcommand\Tz{{}^{0} T}
\newcommand\zS{{}^{0} S}
\newcommand\dom{\mathcal{D}}
\newcommand\cA{\mathcal{A}}
\newcommand\cB{\mathcal{B}}
\newcommand\cE{\mathcal{E}}
\newcommand\cG{\mathcal{G}}
\newcommand\cH{\mathcal{H}}
\newcommand\cU{\mathcal{U}}
\newcommand\cO{\mathcal{O}}
\newcommand\cF{\mathcal{F}}
\newcommand\cM{\mathcal{M}}
\newcommand\cQ{\mathcal{Q}}
\newcommand\cR{\mathcal{R}}
\newcommand\cI{\mathcal{I}}
\newcommand\cL{\mathcal{L}}
\newcommand\cK{\mathcal{K}}
\newcommand\cC{\mathcal{C}}
\newcommand\cX{\mathcal{X}}
\newcommand\cY{\mathcal{Y}}
\newcommand\cP{\mathcal{P}}
\newcommand\cS{\mathcal{S}}
\newcommand\cZ{\mathcal{Z}}
\newcommand\cW{\mathcal{W}}
\newcommand\Ptil{\tilde P}
\newcommand\ptil{\tilde p}
\newcommand\chit{\tilde \chi}
\newcommand\yt{\tilde y}
\newcommand\zetat{\tilde \zeta}
\newcommand\xit{\tilde \xi}
\newcommand\taut{{\tilde \tau}}
\newcommand\phit{{\tilde \phi}}
\newcommand\mut{{\tilde \mu}}
\newcommand\sigmah{\hat\sigma}
\newcommand\zetah{\hat\zeta}
\newcommand\etah{\hat\eta}
\newcommand\loc{\mathrm{loc}}
\newcommand\compl{\mathrm{comp}}
\newcommand\reg{\mathrm{reg}}
\newcommand\GBB{\textsf{GBB}}
\newcommand\GBBsp{\textsf{GBB}\ }
\newcommand\bl{{\mathrm b}}
\newcommand{\sH}{\mathsf{H}}
\newcommand{\cte}{\digamma}
\newcommand\cl{\operatorname{cl}}
\newcommand\hsf{\mathcal{S}}
\newcommand\Div{\operatorname{div}}
\newcommand\hilbert{\mathfrak{X}}

\newcommand\Hh{H_{\semi}}

\newcommand\bM{\bar M}
\newcommand\Xext{X_{-\delta_0}}

\newcommand\xib{{\underline{\xi}}}
\newcommand\etab{{\underline{\eta}}}
\newcommand\zetab{{\underline{\zeta}}}

\newcommand\xibh{{\underline{\hat \xi}}}
\newcommand\etabh{{\underline{\hat \eta}}}
\newcommand\zetabh{{\underline{\hat \zeta}}}

\newcommand\zn{z}
\newcommand\sigman{\sigma}
\newcommand\psit{\tilde\psi}
\newcommand\rhot{{\tilde\rho}}

\newcommand\hM{\hat M}

\newcommand\Op{\operatorname{Op}}
\newcommand\Oph{\operatorname{Op_{\semi}}}

\newcommand\innr{{\mathrm{inner}}}
\newcommand\outr{{\mathrm{outer}}}
\newcommand\full{{\mathrm{full}}}
\newcommand\semi{\hbar}

\newcommand\elliptic{\mathrm{ell}}
\newcommand\diffordgen{k}
\newcommand\difford{2}
\newcommand\diffordm{1}
\newcommand\diffordmpar{1}
\newcommand\even{\mathrm{even}}
\newcommand\dimn{n}
\newcommand\dimnpar{n}
\newcommand\dimnm{n-1}
\newcommand\dimnp{n+1}
\newcommand\dimnppar{(n+1)}
\newcommand\dimnppp{n+3}
\newcommand\dimnppppar{n+3}

\setcounter{secnumdepth}{3}
\newtheorem{lemma}{Lemma}[section]
\newtheorem{prop}[lemma]{Proposition}
\newtheorem{thm}[lemma]{Theorem}
\newtheorem{cor}[lemma]{Corollary}
\newtheorem{result}[lemma]{Result}
\newtheorem*{thm*}{Theorem}
\newtheorem*{prop*}{Proposition}
\newtheorem*{cor*}{Corollary}
\newtheorem*{conj*}{Conjecture}
\numberwithin{equation}{section}
\theoremstyle{remark}
\newtheorem{rem}[lemma]{Remark}
\newtheorem*{rem*}{Remark}
\theoremstyle{definition}
\newtheorem{Def}[lemma]{Definition}
\newtheorem*{Def*}{Definition}

\newcommand{\mar}[1]{{\marginpar{\sffamily{\scriptsize #1}}}}
\newcommand\av[1]{\mar{AV:#1}}

\renewcommand{\theenumi}{\roman{enumi}}
\renewcommand{\labelenumi}{(\theenumi)}

\title[Microlocal analysis of asymptotically hyperbolic spaces]{Microlocal analysis of
asymptotically hyperbolic spaces and high energy resolvent estimates}
\author[Andras Vasy]{Andr\'as Vasy}
\address{Department of Mathematics, Stanford University, CA 94305-2125, USA}

\email{andras@math.stanford.edu}

\subjclass[2000]{Primary 58J50; Secondary 35P25, 35L05, 58J47}

\date{May 30, 2011. Original version: April 7, 2011.}
\thanks{The author gratefully
  acknowledges partial support from the NSF under grant number  
  DMS-0801226 and from a Chambers Fellowship at Stanford University, as well
as the hospitality of MSRI in Berkeley in Fall 2010.}

\begin{abstract}
In this paper
we describe a new method for analyzing the Laplacian on
asymptotically hyperbolic spaces, which was introduced in
\cite{Vasy-Dyatlov:Microlocal-Kerr}. This new method in particular
constructs the analytic continuation of the resolvent for even metrics
(in the sense of Guillarmou), and gives high energy estimates in strips.
The key idea is an extension across the boundary for a problem
obtained from the Laplacian shifted by the spectral parameter. The
extended problem is non-elliptic -- indeed, on the other side it is
related to the Klein-Gordon equation on an asymptotically de Sitter
space -- but nonetheless it can be analyzed by methods of Fredholm
theory. This method is a special case of a more general approach to the
analysis of PDEs which includes, for instance, Kerr-de Sitter and
Minkowski type spaces; see \cite{Vasy-Dyatlov:Microlocal-Kerr} for
details.
The present paper is self-contained, and deals with asymptotically
hyperbolic spaces
without burdening the reader with material only needed for the
analysis of the Lorentzian problems considered in \cite{Vasy-Dyatlov:Microlocal-Kerr}.
\end{abstract}

\maketitle

\section{Introduction}
In this paper
we describe a new method for analyzing the Laplacian on
asymptotically hyperbolic, or conformally compact, spaces,
which was introduced in
\cite{Vasy-Dyatlov:Microlocal-Kerr}. This new method in particular
constructs the analytic continuation of the resolvent for even metrics
(in the sense of Guillarmou \cite {Guillarmou:Meromorphic}), and gives high energy estimates in strips.
The key idea is an extension across the boundary for a problem
obtained from the Laplacian shifted by the spectral parameter. The
extended problem is non-elliptic -- indeed, on the other side it is
related to the Klein-Gordon equation on an asymptotically de Sitter
space -- but nonetheless it can be analyzed by methods of Fredholm
theory. In \cite{Vasy-Dyatlov:Microlocal-Kerr} these methods, with
some additional ingredients, were
used to analyze the wave equation on Kerr-de Sitter space-times; the
present setting is described there as the simplest application of the
tools introduced. The purpose of the present paper is to give a
self-contained treatment of conformally compact spaces, without
burdening the reader with the additional machinery required for the
Kerr-de Sitter analysis.

We start by recalling the definition of
manifolds with
{\em even} conformally compact metrics.
These are Riemannian metrics $g_0$
on the interior of an $n$-dimensional compact manifold with boundary
$X_0$ such that near the boundary $Y$,
with a product decomposition nearby and
a defining function $x$, they are
of the form
$$
g_0=\frac{dx^2+h}{x^2},
$$
where $h$ is a family of metrics on $Y=\pa X_0$ depending on $x$ in an even manner,
i.e.\ only even powers of $x$ show up in the Taylor series. (There is a much more
natural way to phrase the evenness condition, see
\cite[Definition~1.2]{Guillarmou:Meromorphic}.) We also write $X_{0,\even}$ for
the manifold $X_0$ when the smooth structure has been changed so that
$x^2$ is a boundary defining function; thus, a smooth function on $X_0$ is
even if and only if it is smooth when regarded as a function on $X_{0,\even}$.
The analytic continuation of the resolvent in this category (but without the
evenness condition) was obtained
by Mazzeo and Melrose \cite{Mazzeo-Melrose:Meromorphic} (Agmon
\cite{Agmon:Spectral} and Perry
\cite{Perry:Laplace-I,Perry:Laplace-II}
had similar results in the
restricted setting of hyperbolic quotients), with the
possibility of
some essential singularities at pure imaginary half-integers noticed by Borthwick and Perry
\cite{Borthwick-Perry:Scattering}.
Guillarmou
\cite{Guillarmou:Meromorphic} showed that for even metrics the latter do not
exist, but generically they do exist for non-even metrics, by
a more careful analysis utilizing the work of Graham and Zworski
\cite{Graham-Zworski:Scattering}. Further, if the manifold
is actually asymptotic to hyperbolic space (note that hyperbolic space is of this form
in view of the Poincar\'e model),
Melrose, S\'a Barreto and Vasy \cite{Melrose-SaBarreto-Vasy:Semiclassical} proved
high energy resolvent estimates in strips around the real axis via a parametrix
construction; these are exactly
the estimates that allow expansions for solutions of the wave equation in terms
of resonances. Estimates just on the real axis were obtained by
Cardoso and Vodev for more general conformal infinities
\cite{Cardoso-Vodev:Uniform, Vodev:Local}.
One implication of our methods is a generalization of these results:
we allow general conformal infinities, and obtain estimates in
arbitrary strips.

Below $\dCI(X_0)$ denotes `Schwartz functions' on $X_0$, i.e.\ $\CI$ functions
vanishing with all derivatives at $\pa X_0$, and $\dist(X_0)$ is the dual space
of `tempered distributions' (these spaces are naturally identified for
$X_0$ and $X_{0,\even}$), while $H^s(X_{0,\even})$ is the standard Sobolev
space on $X_{0,\even}$ (corresponding
to extension across the boundary, see e.g.\ \cite[Appendix~B]{Hor}, where these
are denoted by $\bar H^s(X_{0,\even}^\circ)$). For instance,
$\|u\|_{H^{1}(X_{0,\even})}^2=\|u\|_{L^2(X_{0,\even})}^2
+\|d u\|^2_{L^2(X_{0,\even})}$, with the norms taken with
respect to any {\em smooth} Riemannian metric on $X_{0,\even}$ (all
choices yield equivalent norms by compactness). Here we point out that
while $x^2g_0$ is a smooth non-degenerate section of the pull-back of $T^*X_0$ to
$X_{0,\even}$ (which essentially means that it is a smooth, in $X_{0,\even}$,
non-degenerate linear combination of $dx$ and $dy_j$ in local
coordinates), as $\mu=x^2$ means $d\mu=2x\,dx$, it is actually
not a smooth section of $T^*X_{0,\even}$. However, $x^{n+1}\,|dg_0|$
is a smooth non-degenerate density, so $L^2(X_{0,\even})$ (up to norm
equivalence) is the $L^2$ space given by the density
$x^{n+1}\,|dg_0|$, i.e.\ is $x^{-(n+1)/2}L^2_{g_0}(X_0)$, i.e.
$$
\|x^{-(n+1)/2}u\|_{L^2(X_{0,\even})}\sim \|u\|_{L^2_{g_0}(X)}.
$$
Further, in local coordinates $(\mu,y)$,
using $2\pa_\mu=x^{-1}\pa_x$, the $H^1(X_{0,\even})$
norm of $u$ is equivalent to
$$
\|u\|_{L^2(X_{0,\even})}^2
+\|x^{-1}\pa_x
u\|^2_{L^2(X_{0,\even})}+\sum_{j=1}^{n-1}\|\pa_{y_j}u\|^2_{L^2(X_{0,\even})}.
$$

We also let $\Hh^s(X_{0,\even})$ be the standard
semiclassical Sobolev space, i.e.\ for $h$ bounded away from $0$ this
is equipped with a norm
equivalent to the standard fixed ($h$-independent) norm on
$H^s(X_{0,\even})$,
but the uniform behavior as $h\to 0$ is different; e.g.\ locally the
$\Hh^1(X)$ norm is given by $\|u\|_{\Hh^1}^2=\sum_{j} \|hD_j u\|^2_{L^2}+\|u\|^2_{L^2}$,
see \cite{Dimassi-Sjostrand:Spectral,
  Evans-Zworski:Semiclassical}. Thus, in \eqref{eq:intro-nontrap}, for
$s=1$ (which is possible when $C<1/2$, i.e.\ if one only considers the continuation into a
small strip beyond the continuous spectrum),
\begin{equation*}\begin{split}
s=1\qquad\Longrightarrow\qquad&\|u\|_{H^{s-1}_{|\sigma|^{-1}}(X_{0,\even})}=\|u\|_{L^2(X_{0,\even})}\\
&\Mand\|u\|_{H^{s}_{|\sigma|^{-1}}(X_{0,\even})}^2=\|u\|_{L^2(X_{0,\even})}^2
+|\sigma|^{-2}\|d u\|^2_{L^2(X_{0,\even})},
\end{split}\end{equation*}
with the norms taken with
respect to any smooth Riemannian metric on $X_{0,\even}$.

\begin{thm*}(See Theorem~\ref{thm:conf-compact-high} for the full statement.)
Suppose that $X_0$ is an $\dimnpar$-dimensional
manifold with boundary $Y$ with
an even Riemannian conformally compact metric $g_0$. Then
the inverse of
$$
\Delta_{g_0}-\left(\frac{\dimnm}{2}\right)^2-\sigma^2,
$$
written as $\cR(\sigma):L^2\to L^2$,
has a meromorphic continuation from
$\im\sigma\gg0$ to $\Cx$,
$$
\cR(\sigma):\dCI(X_0)\to\dist(X_0),
$$
with poles with finite rank residues. If in addition
$(X_0,g_0)$ is non-trapping, then
non-trapping estimates hold in every strip $|\im\sigma|<C$, $|\re\sigma|\gg 0$:
for
$s>\frac{1}{2}+C$,
\begin{equation}\label{eq:intro-nontrap}
\|x^{-(\dimnm)/2+\imath\sigma} \cR(\sigma)f\|_{H^s_{|\sigma|^{-1}}(X_{0,\even})}
\leq \tilde C|\sigma|^{-1}\|x^{-(\dimnppp)/2+\imath\sigma}f\|_{H^{s-1}_{|\sigma|^{-1}}(X_{0,\even})}.
\end{equation}
If $f$ has compact support in $X_0^\circ$,
the $s-1$ norm on $f$ can be replaced by the $s-2$ norm. For suitable
$\delta_0>0$, the estimates
are valid in regions $-C<\im\sigma<\delta_0|\re\sigma|$ if the multipliers
$x^{\imath\sigma}$
are slightly adjusted.
\end{thm*}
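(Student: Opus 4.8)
The strategy is to replace the non-Fredholm problem $\Delta_{g_0}-(\tfrac{\dimnm}{2})^2-\sigma^2$ on $X_0$ by a Fredholm problem on an extended space to which microlocal propagation estimates apply, and to recover $\cR(\sigma)$ as a restriction of the inverse of that problem. First I would pass to the even smooth structure $X_{0,\even}$, in which $\mu=x^2$ is a boundary defining function, and conjugate by the indicial weight: with $a=\tfrac{\dimnm}{2}-\imath\sigma$ one computes that $\mu^{-1}x^{-a}\bigl(\Delta_{g_0}-(\tfrac{\dimnm}{2})^2-\sigma^2\bigr)x^{a}$ is, up to a smooth positive factor, a second order operator $P_\sigma$ with smooth coefficients across $Y=\{\mu=0\}$, and extend it to $P_\sigma\in\Diff^2(X)$, where $X$ is obtained by attaching a collar $\{-\delta_0<\mu\le0\}$ to $X_{0,\even}$. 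Here $P_\sigma$ is holomorphic in $\sigma$, its principal symbol is a nonzero multiple of $4\mu\xi^2+|\eta|_h^2$ in canonical coordinates $(\mu,y;\xi,\eta)$, and the $\sigma$-dependence is confined to lower order. Hence $P_\sigma$ is elliptic on the physical region $\{\mu>0\}$; its characteristic set $\Sigma$ lies in $\{\mu\le0\}$, meets $\{\mu=0\}$ exactly in the conormal bundle $N^*Y$, and is a Lorentzian-type light cone for $\mu<0$, where $P_\sigma$ is a Klein--Gordon-type operator on a collar of an asymptotically de Sitter space. The Hamilton flow is of real principal type on $\Sigma$ away from the radial set $L=N^*Y$ minus its zero section, which splits into a source $L_+$ and a sink $L_-$; every null-bicharacteristic in $\Sigma$, suitably oriented, either tends to $L$ or leaves $\{\mu\ge-\delta_0\}$ through $\{\mu=-\delta_0\}$. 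To absorb the latter I introduce a complex absorbing term $Q_\sigma\in\Psi^2(X)$ with principal symbol $\ge0$, elliptic on $\Sigma\cap\{\mu=-\delta_0\}$ and microsupported in $\{\mu<0\}$, and work with $P_\sigma-\imath Q_\sigma$.

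Next I would assemble the a priori estimates for $P_\sigma-\imath Q_\sigma$. Elliptic regularity controls $u$ microlocally off $\Sigma$; real principal type propagation controls $\Sigma$ away from $L$, pushing control into the elliptic set of $Q_\sigma$ and into a neighborhood of $L$. The crucial new ingredient is the radial point estimate at $L_\pm$: linearizing the Hamilton flow transversally to $L$ and evaluating the subprincipal symbol on $L$ produces a threshold regularity depending on $\im\sigma$, and one can always propagate $H^s$ regularity out of the source $L_+$ (given a negative-order a priori bound) and into the sink $L_-$ provided $s$ lies above the relevant threshold; requiring this uniformly over a strip $|\im\sigma|\le C$ is exactly the hypothesis $s>\tfrac12+C$. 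Combining these pieces yields, for such $s$, a global estimate $\|u\|_{H^s(X)}\le C\bigl(\|(P_\sigma-\imath Q_\sigma)u\|_{H^{s-1}(X)}+\|u\|_{H^{-N}(X)}\bigr)$; running the same argument for the adjoint $(P_\sigma-\imath Q_\sigma)^*$, for which the source/sink roles and the sign of the absorption are both reversed so that $s>\tfrac12+C$ again suffices, gives the dual estimate $\|v\|_{H^{1-s}(X)}\le C\bigl(\|(P_\sigma-\imath Q_\sigma)^*v\|_{H^{-s}(X)}+\|v\|_{H^{-N}(X)}\bigr)$. By compactness of $H^s(X)\hookrightarrow H^{-N}(X)$ these show that $P_\sigma-\imath Q_\sigma\colon\cX^s\to H^{s-1}(X)$ is Fredholm, where $\cX^s=\{u\in H^s(X):(P_\sigma-\imath Q_\sigma)u\in H^{s-1}(X)\}$, and that this is a holomorphic Fredholm family.

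For the meromorphy statement I invoke analytic Fredholm theory: it suffices to exhibit one $\sigma$ at which $P_\sigma-\imath Q_\sigma$ is invertible. For $\im\sigma\gg0$ this follows by relating the problem on $\{\mu>0\}$ to the genuine $L^2$-resolvent of $\Delta_{g_0}$, which exists there by the spectral theorem, while the propagation estimates and the complex absorption control the part in $\{\mu<0\}$ (alternatively, a direct energy estimate valid for $\im\sigma\gg0$ annihilates both kernels). Analytic Fredholm theory then makes $(P_\sigma-\imath Q_\sigma)^{-1}$ meromorphic on $\Cx$ with finite-rank residues. Since $Q_\sigma$ is microsupported in $\{\mu<0\}$, for $f$ supported in $\{\mu\ge0\}$ the restriction to $\{\mu>0\}$ of $u=(P_\sigma-\imath Q_\sigma)^{-1}f$ solves the conjugated equation there and is smooth, hence even, up to $Y$; unwinding the conjugation — which inserts the weights $x^{-(\dimnm)/2+\imath\sigma}$ on $u$ and, because of the $\mu^{-1}=x^{-2}$ normalization, $x^{-(\dimnppp)/2+\imath\sigma}$ on $f$ — identifies this with $\cR(\sigma)$, so that $\cR(\sigma)\colon\dCI(X_0)\to\dist(X_0)$ inherits meromorphy with finite-rank residues.

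For the high energy estimates, assume $(X_0,g_0)$ non-trapping and re-run the scheme in the semiclassical calculus with $h\sim|\re\sigma|^{-1}$ and $z=h\sigma$ confined to $|\im z|\le Ch$. On $\{\mu>0\}$ the semiclassical principal symbol of $h^2P_\sigma$ is, via $\mu=x^2$, that of $h^2(\Delta_{g_0}-(\tfrac{\dimnm}{2})^2-\sigma^2)$, so there the semiclassical characteristic set is the unit cosphere bundle of $g_0$ and the flow is the reparametrized geodesic flow; non-trapping means every semiclassical bicharacteristic over $\{\mu>0\}$, run in either direction, tends to the radial set $L$, whence it continues into $\{\mu<0\}$ and reaches the semiclassically elliptic set of $Q_\sigma$ near $\{\mu=-\delta_0\}$. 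Thus for $h$ small the semiclassical elliptic, real principal type and radial point estimates (the last again needing $s>\tfrac12+C$) assemble to a lossless global bound $\|u\|_{\Hh^s(X)}\le Ch^{-1}\|(P_\sigma-\imath Q_\sigma)u\|_{\Hh^{s-1}(X)}$ with no error term, the $\Hh^{-N}$ term being absorbable for small $h$. Translating back through the conjugation gives exactly \eqref{eq:intro-nontrap}; if $f$ has compact support in $X_0^\circ$ it vanishes near $Y$, where the radial point estimates are only one derivative smoothing, so the $s-1$ norm sharpens to $s-2$; and for $\im\sigma<0$, where $x^{\imath\sigma}$ is unbounded, one slightly adjusts the multipliers to get the estimate in $-C<\im\sigma<\delta_0|\re\sigma|$. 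The main obstacle throughout is the radial point analysis at $L_\pm$: extracting the sharp threshold $s>\tfrac12+C$ from the flow linearization and subprincipal symbol, and proving the associated positive-commutator estimates (above-threshold propagation into the sink and the source version, in both the standard and semiclassical settings), is the genuinely new microlocal input, the remaining elliptic, propagation, complex-absorption, Fredholm and analytic-continuation steps being more routine if bookkeeping-heavy.
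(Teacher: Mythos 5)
Your proposal captures the overall architecture of the paper's argument: pass to the even smooth structure $\mu = x^2$, conjugate by the indicial weight and divide by $\mu$ (your $\mu^{-1}x^{-a}(\,\cdot\,)x^{a}$ with $a=(n-1)/2-\imath\sigma$ is exactly the paper's $\mu^{-1/2}\mu^{\imath\sigma/2-(n+1)/4}(\,\cdot\,)\mu^{-\imath\sigma/2+(n+1)/4}\mu^{-1/2}$), extend across $Y$, identify $N^*Y\setminus o$ as a source/sink pair $L_\pm$, add complex absorption near $\mu=-\delta_0$, assemble elliptic, real-principal-type and radial-point estimates into a Fredholm statement, invoke analytic Fredholm theory with a reference point at $\im\sigma\gg 0$, and unwind the conjugation; the weight bookkeeping at the end is also right.

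There is, however, a genuine gap in your choice of complex absorption and in your treatment of the sink $L_-$, and it breaks the proof. You take $Q_\sigma$ with principal symbol $q\geq 0$ on all of $\Sigma$ and propose to propagate $H^s$ regularity ``out of the source $L_+$ \ldots and into the sink $L_-$ provided $s$ lies above the relevant threshold.'' These two choices are incompatible. For $P_\sigma-\imath Q_\sigma$, the positive-commutator argument lets one propagate regularity \emph{forward} along $\sH_p$ only where $q\geq 0$. That is the right direction in $\Sigma_+$: forward from the source $L_+$ toward $\elliptic(Q_\sigma)$. But in $\Sigma_-$ the forward flow converges to the sink $L_-$, so forward propagation pushes control \emph{toward} $L_-$, and the radial-point estimate that propagates $H^s$ control from a punctured neighborhood \emph{into} a sink holds only for $s$ \emph{below} the threshold --- this is Proposition~\ref{prop:micro-in} (with the sign of $\im\sigma$ flipped, since there it is stated for $P_\sigma^*$) --- which is the opposite of your hypothesis $s>\tfrac12+C$. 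What actually closes the argument is the paper's choice $\pm q\geq 0$ near $\Sigma_\pm$ (flagged as ``perhaps slightly unusual''): one then propagates \emph{away} from both radial sets, forward from $L_+$ and \emph{backward} from $L_-$, which is the content of Proposition~\ref{prop:micro-out} and which needs the above-threshold condition $s>\tfrac12-\im\sigma$ at both $L_+$ and $L_-$, as well as $q\leq 0$ on $\Sigma_-$ to allow backward propagation. With a sign-definite $q$ there is no way to control $\Sigma_-$ in $H^s$ with $s$ above threshold, either for $P_\sigma-\imath Q_\sigma$ or for its adjoint. Relatedly, your remark that for the adjoint ``the source/sink roles \ldots are reversed'' is inaccurate: the source/sink structure is a property of the flow of $p$ and is unchanged for $P_\sigma^*$; what reverses is the admissible direction of propagation relative to that flow (because of the sign flip in $q$) together with the sign of the subprincipal term, so that the dual condition $1-s<\tfrac12+\im\sigma$ is again $s>\tfrac12-\im\sigma$. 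Once the sign convention for $q$ is corrected and the radial-point propagation at $L_-$ is stated as propagation away (backward) rather than into, the rest of your outline --- Fredholm setup, invertibility at $\im\sigma\gg 0$, analytic continuation, semiclassical non-trapping estimates, and unwinding the weights --- lines up with the paper's proof.
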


Further, as stated in Theorem~\ref{thm:conf-compact-high}, the resolvent
is {\em semiclassically outgoing} with a loss of $h^{-1}$, in the sense
of recent results of Datchev and Vasy \cite{Datchev-Vasy:Gluing-prop}
and \cite{Datchev-Vasy:Trapped}. This means that for mild trapping (where,
in a strip near the spectrum,
one has polynomially bounded resolvent for a compactly localized version of
the trapped model) one obtains resolvent bounds of the same kind as for the
above-mentioned trapped models, and lossless estimates microlocally
away from the trapping. In particular, one obtains logarithmic losses compared
to non-trapping on the spectrum for hyperbolic trapping in
the sense of \cite[Section~1.2]{Wunsch-Zworski:Resolvent}, and
polynomial losses in strips, since
for the compactly localized model this was
recently shown by Wunsch and Zworski \cite{Wunsch-Zworski:Resolvent}.

Our method is to change the smooth structure, replacing $x$ by $\mu=x^2$,
conjugate the operator by an appropriate weight as well as remove a vanishing
factor of $\mu$, and show that the new operator continues smoothly and
non-degenerately (in an appropriate sense) across $\mu=0$, i.e.\ $Y$,
to a (non-elliptic)
problem which we can analyze utilizing
by now almost standard tools of microlocal analysis. These steps are reflected in
the form of the estimate \eqref{eq:intro-nontrap}; $\mu$ shows up in
the use of evenness,
conjugation due to the presence of $x^{-\dimnppar/2+\imath\sigma}$, and the two halves
of the vanishing factor of $\mu$ being removed in $x^{\pm 1}$ on the left and
right hand sides.

While it might seem somewhat ad hoc, this construction in fact has
origins in wave propagation in one higher dimensional (i.e.\
$n+1$-dimensional)
Lorentzian spaces -- either Minkowski space,
or de Sitter space blown up at a point at future infinity. Namely in
both cases the wave equation (and the Klein-Gordon equation on de Sitter space)
is a totally characteristic, or b-, PDE, and
after a Mellin transform this gives a PDE on the sphere at infinity
in the Minkowski case, and on the front face of the blow-up in the de
Sitter setting. These are exactly the PDE arising by the process
described in the previous paragraph, with the original manifold $X_0$
lying in the interior of the light cone in Minkowski space (so there
are two copies, at future and past infinity) and in the interior of
the backward light cone from the blow-up point in the de Sitter case;
see
\cite{Vasy-Dyatlov:Microlocal-Kerr} for more detail.
This relationship, restricted to the $X_0$-region,
was exploited in \cite[Section~7]{Vasy:De-Sitter},
where the work of Mazzeo and Melrose was used to construct the Poisson
operator on asymptotically de Sitter spaces. Conceptually the main
novelty here is that we work directly with the extended problem, which
turns out to simplify the analysis of Mazzeo and Melrose in many ways and give a new
explanation for Guillarmou's results as well as yield high energy estimates.

We briefly describe this extended operator, $P_\sigma$.
It has radial points at the 
conormal bundle $N^*Y\setminus o$ 
of $Y$ 
in the sense of microlocal analysis, i.e.\ the Hamilton vector field is radial 
at these points, i.e.\ is 
a multiple of the generator of dilations of the fibers of the cotangent bundle
there. However, tools exist to deal with these, going back to Melrose's
geometric treatment of scattering theory on asymptotically Euclidean
spaces \cite{RBMSpec}. Note that $N^*Y\setminus o$ consists of two components,
$\Lambda_+$, resp.\ $\Lambda_-$, and in $S^*X=(T^*X\setminus o)/\RR^+$
the images, $L_+$, resp.\ $L_-$, of these
are sources, resp.\ sinks, for the Hamilton flow. At $L_\pm$ one has choices regarding
the direction one wants to propagate estimates (into or out of the radial points),
which directly correspond to working with strong or weak Sobolev spaces.
For the present problem, the relevant choice is propagating estimates {\em away from}
the radial points, thus working with the `good' Sobolev spaces (which can be
taken to have as positive order as one wishes; there is a minimum amount of
regularity imposed by our choice of propagation direction, cf.\ the requirement
$s>\frac{1}{2}+C$ above \eqref{eq:intro-nontrap}).
All other points are either elliptic, or microhyperbolic.
It remains to either deal with the non-compactness of the `far end' of the
$\dimnpar$-dimensional
de Sitter space --- or instead, as is indeed more convenient when one wants to
deal with more singular geometries, adding complex absorbing potentials,
in the spirit of works of Nonnenmacher and Zworski
\cite{Nonnenmacher-Zworski:Quantum} and Wunsch and Zworski
\cite{Wunsch-Zworski:Resolvent}. In fact, the complex absorption could be
replaced by adding a space-like boundary, see \cite{Vasy-Dyatlov:Microlocal-Kerr},
but for many microlocal purposes
complex absorption is more desirable, hence we follow the latter method.
However, crucially, these complex absorbing techniques (or the addition
of a space-like boundary) already enter
in the non-semiclassical problem in our case, as we are in a non-elliptic setting.

One can reverse the direction of the
argument and analyze the wave equation on an $\dimnpar$-dimensional even
asymptotically
de Sitter space $X_0'$ by extending it across the boundary, much like the
the Riemannian conformally compact
space $X_0$ is extended in this approach. Then, performing microlocal propagation
in the opposite direction, which amounts to working with the adjoint operators
that we already need in order to prove existence of solutions for the Riemannian
spaces,
we obtain existence, uniqueness and structure results for asymptotically
de Sitter spaces, recovering a large
part
of the results of \cite{Vasy:De-Sitter}. Here we only briefly indicate this method
of analysis in Remark~\ref{rem:asymp-dS}.

In other words, we establish a Riemannian-Lorentzian duality, that will have
counterparts both in the pseudo-Riemannian setting of higher signature and in
higher rank symmetric spaces, though in the latter the analysis might become
more complicated. Note that asymptotically hyperbolic and de Sitter spaces
are not connected by a `complex rotation' (in the sense of an actual deformation);
they are smooth continuations of each other in the sense we just discussed.

To emphasize the simplicity of our method, we list all of the microlocal techniques
(which are relevant both in the classical and in the semiclassical setting)
that we use on a {\em compact manifold without boundary}; in all cases {\em only
microlocal Sobolev estimates} matter (not parametrices, etc.):
\begin{enumerate}
\item
Microlocal elliptic regularity.
\item
Microhyperbolic propagation of singularities.
\item
{\em Rough} analysis at a Lagrangian invariant under the Hamilton flow
which roughly behaves like a collection of
radial points, though the internal structure does not matter, in the spirit
of \cite[Section~9]{RBMSpec}.
\item
Complex absorbing `potentials' in the spirit of
\cite{Nonnenmacher-Zworski:Quantum} and
\cite{Wunsch-Zworski:Resolvent}.
\end{enumerate}
These are almost `off the shelf' in terms of modern microlocal analysis, and thus
our approach, from a microlocal perspective, is quite simple. We use these
to show that on the continuation across the boundary of the conformally compact
space we have a Fredholm problem, on a perhaps slightly exotic function space,
which however is (perhaps apart from the complex absorption)
the simplest possible coisotropic function space based on
a Sobolev space, with order dictated by the radial points. Also, we propagate
the estimates along bicharacteristics in different directions depending
on the component $\Sigma_\pm$
of the characteristic set under consideration; correspondingly
the sign of the complex absorbing `potential' will vary with $\Sigma_\pm$, which
is perhaps slightly unusual. However, this is completely parallel to solving the
standard Cauchy, or forward, problem for the wave equation, where one propagates
estimates in {\em opposite} directions relative to the Hamilton vector field in the
two components of the characteristic set.

The complex absorption we use modifies the operator $P_\sigma$ outside
$X_{0,\even}$. However, while $(P_\sigma-\imath Q_\sigma)^{-1}$ depends on $Q_\sigma$,
its behavior on $X_{0,\even}$, and even near $X_{0,\even}$, is independent of this
choice; see the proof of Section~\ref{sec:conf-comp-results}
for a detailed explanation. In particular, although $(P_\sigma-\imath Q_\sigma)^{-1}$
may have resonances
other than those of $\cR(\sigma)$, the resonant states of
these additional resonances are supported
outside $X_{0,\even}$, hence do not affect the singular behavior of the resolvent in
$X_{0,\even}$.

While the results are stated for the scalar equation, analogous results hold
for operators on natural vector bundles, such as the Laplacian on differential
forms. This is so because the results work if the principal symbol of the extended
problem is scalar
with the demanded properties, and the principal symbol of
$\frac{1}{2\imath}(P_\sigma-P_\sigma^*)$ is either scalar at the
`radial sets', or instead satisfies appropriate estimates (as an endomorphism of
the pull-back of the vector bundle to the cotangent bundle) at this location;
see Remark~\ref{rem:bundles}.
The only change in terms of results on asymptotically hyperbolic spaces
is that the threshold $(\dimnm)^2/4$ is shifted; in
terms of the explicit conjugation of Section~\ref{sec:conf-comp-results}
this is so because of the change in the first order term in \eqref{eq:conf-comp-Lap-form}.

In Section~\ref{sec:spaces} we describe in detail the setup of
conformally compact spaces and the extension across the boundary. Then
in Section~\ref{sec:microlocal} we describe the in detail the
necessary microlocal analysis for the extended operator.
Finally, in Section~\ref{sec:conf-comp-results} we translate these
results back to asymptotically hyperbolic spaces.

I am very grateful to Maciej Zworski, Richard Melrose, Semyon Dyatlov,
Gunther Uhlmann, Jared
Wunsch, Rafe Mazzeo,
Kiril Datchev, Colin Guillarmou and Dean Baskin for very helpful
discussions, careful reading of versions of this manuscript as well as
\cite{Vasy-Dyatlov:Microlocal-Kerr} (with special thanks to Semyon
Dyatlov in this regard; Dyatlov noticed an incomplete argument in an
earlier version of this paper), and
for their enthusiasm for this project, as well as to participants in my Topics in
Partial Differential Equations class at Stanford University in Winter Quarter
2011, where this material was covered, for their questions and
comments.

\section{Notation}\label{sec:notation}
We start by briefly recalling the basic pseudodifferential objects, in
part to establish notation.
As a general reference for microlocal analysis, we refer to
\cite{Hor}, while for semiclassical analysis, we refer to
\cite{Dimassi-Sjostrand:Spectral, Evans-Zworski:Semiclassical}.

First,
$S^\diffordgen(\RR^p;\RR^\ell)$ is
the set of $\CI$ functions on $\RR^p_z\times\RR^\ell_\zeta$
satisfying uniform
bounds
$$
|D_z^\alpha D_\zeta^\beta a|\leq C_{\alpha\beta}\langle \zeta\rangle^{\diffordgen-|\beta|},
\ \alpha\in\Nat^p,\ \beta\in\Nat^\ell.
$$
If $O\subset\RR^p$ and $\Gamma\subset\RR^\ell_\zeta$ are open,
we define $S^\diffordgen(O;\Gamma)$ by requiring
these estimates to hold
only for $z\in O$ and $\zeta\in\Gamma$.
(We could instead require uniform estimates on compact subsets;
this makes no difference here.)
The class of classical (or one-step polyhomogeneous) symbols is
the subset $S_{\cl}^\diffordgen(\RR^p;\RR^\ell)$ of $S^\diffordgen(\RR^p;\RR^\ell)$
consisting of symbols possessing an asymptotic expansion
\begin{equation}\label{eq:classical-expand}
a(z,r\omega)\sim \sum a_j(z,\omega) r^{\diffordgen-j},
\end{equation}
where $a_j\in\CI(\RR^p\times\sphere^{\ell-1})$.
Then on $\RR^n_z$, pseudodifferential
operators $A\in\Psi^{\diffordgen}(\RR^n)$ are of the form
\begin{equation*}\begin{split}
A=\Op(a);\ &(\Op(a)u)(z)
=(2\pi)^{-n}\int_{\RR^n} e^{i(z-z')\cdot\zeta} a(z,\zeta)\,u(z')\,d\zeta\,dz',\\
&\qquad u\in\cS(\RR^n),\ a\in S^\diffordgen(\RR^n;\RR^n);
\end{split}\end{equation*}
understood as an oscillatory integral. Classical pseudodifferential operators,
$A\in\Psi_{\cl}^\diffordgen(\RR^n)$, form the subset where $a$ is a classical symbol.
The principal symbol $\sigma_{\diffordgen}(A)$
of $A\in\Psi^\diffordgen(\RR^n)$ is the equivalence class
$[a]$ of $a$ in $S^\diffordgen(\RR^n;\RR^n)/S^{\diffordgen-1}(\RR^n;\RR^n)$.
For classical $a$, one can instead regard $a_0(z,\omega) r^\diffordgen$ as the
principal symbol; it is a $\CI$ function on $\RR^n\times(\RR^n\setminus \{0\})$,
which is homogeneous of degree $\diffordgen$ with respect to the $\RR^+$-action
given by dilations in the second factor, $\RR^n\setminus \{0\}$. The
principal symbol is multiplicative,
i.e.\
$\sigma_{\diffordgen+\diffordgen'}(AB)=\sigma_{\diffordgen}(A)\sigma_{\diffordgen'}(B)$. Moreover,
the principal symbol of a commutator is given by the Poisson bracket
(or equivalently by the Hamilton vector field):
$\sigma_{\diffordgen+\diffordgen'-1}(\imath[A,B])=\sH_{\sigma_{\diffordgen}(A)}\sigma_{\diffordgen'}(B)$,
with
$\sH_a=\sum_{j=1}^n((\pa_{\zeta_j}a)\pa_{z_j}-(\pa_{z_j}a)\pa_{\zeta_j})$. Note
that for $a$ homogeneous of order $\diffordgen$, $\sH_a$ is
homogeneous of order $\diffordgen-1$.

There are two very important properties: non-degeneracy (called ellipticity) and extreme
degeneracy (captured by the operator wave front set) of an operator. One
says that $A$ is elliptic at $\alpha\in \RR^n\times(\RR^n\setminus
\{0\})$ if there exists an open cone $\Gamma$ (conic with respect to
the $\RR^+$-action on
$\RR^n\setminus o$) around $\alpha$ and $R>0$, $C>0$ such that
$|a(x,\xi)|\geq C|\xi|^{\diffordgen}$ for $|\xi|>R$,
$(x,\xi)\in\Gamma$, where $[a]=\sigma_{\diffordgen}(A)$. If $A$ is
classical, and $a$ is taken to be homogeneous, this just amounts to
$a(\alpha)\neq 0$.

On the other hand, for $A=\Op(a)$ and
$\alpha\in\RR^n\times(\RR^n\setminus o)$ one says that
$\alpha\notin\WF'(A)$
if there exists an open cone $\Gamma$ around $\alpha$ such that
$a|_{\Gamma}\in S^{-\infty}(\Gamma)$, i.e.\ $a|_{\Gamma}$ is rapidly
decreasing, with all derivatives, as $|\xi|\to\infty$,
$(x,\xi)\in\Gamma$.
Note that both the elliptic set $\elliptic(A)$ of $A$ (i.e.\ the set of points where
$A$ is elliptic) and $\WF'(A)$ are conic.

Differential operators on $\RR^n$ form the subset of $\Psi(\RR^n)$
in which $a$ is polynomial in the second factor, $\RR^n_\zeta$, so locally
$$
A=\sum_{|\alpha|\leq\diffordgen} a_\alpha(z)D_z^\alpha,\qquad
\sigma_{\diffordgen}(A)=\sum_{|\alpha|=\diffordgen} a_\alpha(z)\zeta^\alpha.
$$

If $X$ is a manifold,
one can transfer these definitions to $X$ by localization and requiring that
the Schwartz kernels are $\CI$ densities away from the diagonal in $X^2=X\times X$;
then $\sigma_{\diffordgen}(A)$ is in $S^\diffordgen(T^*X)/S^{\diffordgen-1}(T^*X)$,
resp.\ $S^\diffordgen_{\hom}(T^*X\setminus o)$ when
$A\in\Psi^\diffordgen(X)$, resp.\ $A\in\Psi^{\diffordgen}_{\cl}(X)$; here $o$ is the zero section,
and $\hom$ stands for symbols homogeneous with respect to the $\RR^+$ action.
If $A$ is a differential operator, then the classical (i.e.\ homogeneous) version
of the principal symbol is a homogeneous polynomial in the fibers of the
cotangent bundle of degree $\diffordgen$. The notions of $\elliptic(A)$ and $\WF'(A)$
extend to give conic subsets of $T^*X\setminus o$; equivalently they
are subsets of the cosphere bundle $S^*X=(T^*X\setminus o)/\RR^+$.
We can also work with operators depending on a parameter $\lambda\in O$ by
replacing $a\in S^\diffordgen(\RR^n;\RR^n)$ by $a\in S^\diffordgen(\RR^n\times O;\RR^n)$,
with $\Op(a_\lambda)\in\Psi^\diffordgen(\RR^n)$ smoothly dependent on $\lambda\in O$.
In the case of differential operators, $a_\alpha$ would simply depend smoothly
on the parameter $\lambda$.

We next consider the semiclassical operator algebra.
We adopt the convention that $\semi$
denotes semiclassical objects, while $h$ is the actual semiclassical
parameter.
This algebra, $\Psih(\RR^n)$, is given by
\begin{equation*}\begin{split}
A_h=\Oph(a);\ &\Oph(a)u(z)=(2\pi h)^{-n}\int_{\RR^n} e^{i(z-z')\cdot\zeta/h} a(z,\zeta,h)\,u(z')\,d\zeta\,dz',\\
&\qquad u\in\cS(\RR^n),\ a\in \CI([0,1)_h;
S^\diffordgen(\RR^n;\RR^n_\zeta));
\end{split}\end{equation*}
its classical subalgebra, $\Psihcl(\RR^n)$ corresponds to
$a\in \CI([0,1)_h;
S_{\cl}^\diffordgen(\RR^n;\RR^n_\zeta))$.
The semiclassical principal symbol is now $\sigma_{\semi,\diffordgen}(A)=a|_{h=0}\in
S^\diffordgen(\RR^n\times\RR^n)$. In the setting of a general manifold $X$,
$\RR^n\times\RR^n$
is replaced by $T^*X$. Correspondingly, $\WFh'(A)$ and
$\elliptic_{\semi}(A)$ are subsets of $T^*X$.
We can again add an extra parameter $\lambda\in O$, so
$a\in \CI([0,1)_h;S^\diffordgen(\RR^n\times O;\RR^n_\zeta))$; then in the invariant setting
the principal symbol is $a|_{h=0}\in
S^\diffordgen(T^*X\times O)$.

Differential operators now take the form
\begin{equation}\label{eq:semicl-diff}
A_{h,\lambda}=\sum_{|\alpha|\leq\diffordgen} a_{\alpha}(z,\lambda;h)(hD_z)^\alpha.
\end{equation}
Such a family has two principal symbols, the standard one (but taking into
account the semiclassical degeneration, i.e.\ based on $(hD_z)^\alpha$ rather than
$D_z^\alpha$), which depends on $h$ and is homogeneous,
and the semiclassical one, which is at $h=0$, and is not homogeneous:
\begin{equation*}\begin{split}
&\sigma_{\diffordgen}(A_{h,\lambda})=\sum_{|\alpha|=\diffordgen} a_{\alpha}(z,\lambda;h)
\zeta^\alpha,\\
&\sigma_{\semi}(A_{h,\lambda})
=\sum_{|\alpha|\leq \diffordgen} a_{\alpha}(z,\lambda;0)\zeta^\alpha.
\end{split}\end{equation*}
However, the restriction of $\sigma_{\diffordgen}(A_{h,\lambda})$ to $h=0$ is the
principal symbol of $\sigma_{\semi}(A_{h,\lambda})$. In the special case in which
$\sigma_{\diffordgen}(A_{h,\lambda})$ is independent of $h$ (which is true in
the setting considered below), one can simply regard the usual principal
symbol as the principal part of the semiclassical symbol.

This is a convenient place to recall from \cite{RBMSpec} that it is
often useful to consider the radial compactification of the fibers of
the cotangent bundle to balls (or hemispheres, in the exposition of
\cite{RBMSpec}). Thus, one adds a sphere at infinity to the fiber
$T_q^*X$ of $T^*X$ over each $q\in X$. This sphere
is naturally identified with $S_q^*X$, and we obtain compact fibers
$\overline{T}_q^*X$
with boundary $S^*_q X$, with the smooth structure
near $S_q^*X$ arising from reciprocal polar coordinates
$(\tilde\rho,\omega)=(r^{-1},\omega)$ for $\tilde\rho>0$, but
extending to $\tilde\rho=0$, and with $S_q^*X$ given by
$\tilde\rho=0$. Thus, with $X=\RR^n$
the classical expansion \eqref{eq:classical-expand} becomes
\begin{equation*}
a(z,\tilde\rho,\omega)\sim \tilde\rho^{-\diffordgen}\sum a_j(z,\omega) \tilde\rho^{j},
\end{equation*}
where $a_j\in\CI(\RR^p\times\sphere^{\ell-1})$, so in particular for
$\diffordgen=0$, this is simply the Taylor series expansion at $S^*X$ of a
function smooth up to $S^*X=\pa\overline{T}^*X$.
In the semiclassical context then one considers
$\overline{T}^*X\times[0,1)$, and notes that `classical' semiclassical
operators of order $0$ are given locally by $\Op_{\semi}(a)$ with $a$
extending to be smooth up to the boundaries of this space, with
semiclassical symbol given by restriction to
$\overline{T}^*X\times\{0\}$, and standard symbol given by restriction
to $S^*X\times[0,1)$. Thus, the claim regarding the limit of the
semiclassical symbol at infinity is simply a matching statement of the
two symbols at the corner $S^*X\times\{0\}$ in
this compactified picture.

Finally, we recall that if
$P=\sum_{|\alpha|\leq\diffordgen}a_\alpha(z)D_z^\alpha$
is an order $\diffordgen$ differential
operator, then the behavior of $P-\lambda$ as $\lambda\to\infty$ can
be converted to a semiclassical problem by considering
$$
P_{\semi,\sigma}=h^\diffordgen(P-\lambda)
=\sum_{|\alpha|\leq\diffordgen}h^{\diffordgen-|\alpha|}a_\alpha(z)(hD_z)^\alpha-\sigma,
$$
where
$\sigma=h^\diffordgen\lambda$. Here there is freedom in choosing $h$,
e.g.\ $h=|\lambda|^{1/\diffordgen}$, in which case $|\sigma|=1$, but it
is often useful to leave some flexibility in the choice so that $h\sim
|\lambda|^{1/\diffordgen}$ only, and thus $\sigma$ is in a compact subset of
$\Cx$ disjoint from $0$. Note that
$$
\sigma_{\semi}(P_{\semi,\sigma})=\sum_{|\alpha|=\diffordgen}a_\alpha(z)\zeta^\alpha-\sigma.
$$
If we do not want to explicitly multiply by $h^\diffordgen$, we write
the full high-energy principal symbol of $P-\lambda$ as
$$
\sigma_{\full}(P_{\lambda})=\sum_{|\alpha|=\diffordgen}a_\alpha(z)\zeta^\alpha-\lambda.
$$

More generally, if
$P(\lambda)=\sum_{|\alpha|+|\beta|\leq\diffordgen}a_\alpha(z)\lambda^\beta D_z^\alpha$
is an order $\diffordgen$ differential
operator depending on a large parameter $\lambda$, we let
$$
\sigma_{\full}(P(\lambda))=\sum_{|\alpha|+|\beta|=\diffordgen}a_\alpha(z)\lambda^\beta\zeta^\alpha
$$
be the full large-parameter symbol. With $\lambda=h^{-1}\sigma$,
$$
P_{\semi,\sigma}=h^{\diffordgen}P(\lambda)=\sum_{|\alpha|+|\beta|\leq\diffordgen}h^{\diffordgen-|\alpha|-|\beta|}a_\alpha(z)\sigma^\beta
(hD_z)^\alpha
$$
is a semiclassical differential operator with semiclassical symbol
$$
\sigma_{\semi}(P_{\semi,\sigma})=\sum_{|\alpha|+|\beta|=\diffordgen}a_\alpha(z)\sigma^\beta\zeta^\alpha.
$$
Note that the full large-parameter symbol and the semiclassical symbol
are `the same', i.e.\ they are simply related to each other.

\section{Conformally compact spaces}\label{sec:spaces}

\subsection{From the Laplacian to the extended operator}\label{subsec:Laplacian-extension}
Suppose that $g_0$ is an even asymptotically hyperbolic metric on
$X_0$, with $\dim X_0=\dimn$. Then
we may choose a product decomposition near the boundary such that
\begin{equation}\label{eq:ah-g-0-prod}
g_0=\frac{dx^2+h}{x^2}
\end{equation}
there,
where $h$ is an even family of metrics; it is convenient to take $x$ to be a
globally defined boundary defining function. Then the dual metric is
$$
G_0=x^2(\pa_x^2+H),
$$
with $H$ the dual metric family of $h$ (depending on $x$ as a
parameter), and
$$
|dg_0|=\sqrt{|\det g_0|}\,dx\,dy=x^{-\dimn}\sqrt{|\det h|}\,dx\,dy
$$
so
\begin{equation}\label{eq:conf-comp-Lap-form}
\Delta_{g_0}=(xD_x)^2+\imath(\dimnm+x^2\gamma) (xD_x)+x^2\Delta_{h},
\end{equation}
with $\gamma$ even, and $\Delta_h$ the $x$-dependent family of
Laplacians of $h$ on $Y$.

We show now that if we change the smooth structure on $X_0$ by
declaring that only even functions of $x$ are smooth, i.e.\
introducing $\mu=x^2$ as the boundary defining function, then after a
suitable conjugation and division by a vanishing factor the resulting
operator smoothly and non-degenerately continues across the boundary,
i.e.\ continues to $\Xext=(-\delta_0,0)_\mu\times Y \sqcup X_{0,\even}$, where
$X_{0,\even}$ is the manifold $X_0$ with the new smooth structure.

First, changing to coordinates $(\mu,y)$, $\mu=x^2$, we
obtain
\begin{equation}\label{eq:Lap-in-mu}
\Delta_{g_0}=4(\mu D_{\mu})^2+2\imath(\dimnm+\mu\gamma) (\mu D_{\mu})+\mu\Delta_{h},
\end{equation}
Now we conjugate by $\mu^{-\imath\sigman/2+\dimnppar/4}$ to obtain
\begin{equation*}\begin{split}
&\mu^{\imath\sigman/2-\dimnppar/4}(\Delta_{g_0}-\frac{(\dimnm)^2}{4}-\sigman^2)\mu^{-\imath\sigman/2+\dimnppar/4}\\
&=4(\mu D_{\mu}-\sigman/2-\imath \dimnppar/4)^2+2\imath(\dimnm+\mu\gamma) (\mu D_{\mu}-\sigman/2-\imath \dimnppar/4)\\
&\qquad\qquad\qquad\qquad+\mu\Delta_{h}-\frac{(\dimnm)^2}{4}-\sigman^2\\
&=4(\mu D_\mu)^2-4\sigman(\mu D_\mu)+\mu\Delta_h
-4\imath (\mu D_\mu) +2\imath\sigman-1\\
&\qquad\qquad\qquad\qquad+2\imath\mu\gamma (\mu D_{\mu}-\sigman/2-\imath \dimnppar/4).
\end{split}\end{equation*}
Next we multiply by $\mu^{-1/2}$ from both sides to obtain
\begin{equation}\begin{split}\label{eq:ah-prefinal-conj-form}
&\mu^{-1/2}\mu^{\imath\sigman/2-\dimnppar/4}(\Delta_{g_0}-\frac{(\dimnm)^2}{4}-\sigman^2)
\mu^{-\imath\sigman/2+\dimnppar/4}\mu^{-1/2}\\
&=4\mu D_\mu^2-\mu^{-1}-4\sigman D_\mu-2\imath\sigman\mu^{-1}+\Delta_h
-4\imath D_\mu+2\mu^{-1}+2\imath\sigman\mu^{-1}-\mu^{-1}\\
&\qquad\qquad+2\imath\gamma (\mu D_{\mu}-\sigman/2-\imath (\dimnm)/4)\\
&=4\mu D_\mu^2-4\sigman D_\mu+\Delta_h
-4\imath D_\mu+2\imath\gamma (\mu D_{\mu}-\sigman/2-\imath (\dimnm)/4).
\end{split}\end{equation}

This operator is in $\Diff^2(X_{0,\even})$, and now it continues
smoothly across the boundary, by extending $h$ and $\gamma$
in an arbitrary smooth manner. This form suffices for analyzing the
problem for $\sigma$ in a compact set, or indeed for $\sigma$ going to
infinity in a strip near the
reals. However, it is convenient to modify it
as we would like the resulting operator to be semiclassically
elliptic when $\sigma$ is away from the reals. We achieve this
via conjugation by a smooth function, with exponent
depending on $\sigma$. The latter would
make no difference even semiclassically in the real regime as it is conjugation
by an elliptic semiclassical FIO. However, in the
non-real regime (where we would like ellipticity) it does matter; the present operator is
not semiclassically elliptic at the zero section.
So finally we conjugate by $(1+\mu)^{\imath\sigman/4}$ to obtain
\begin{equation}\begin{split}\label{eq:ah-final-conj-form}
P_\sigma=4(1+a_1)\mu D_\mu^2-4(1+a_2)\sigman D_\mu&-(1+a_3)\sigman^2+\Delta_h\\
&-4\imath D_\mu+b_1\mu D_{\mu}+b_2\sigman+c_1
\end{split}\end{equation}
with $a_j$ smooth, real, vanishing at $\mu=0$, $b_j$ and $c_1$ smooth. In
fact, we have $a_1\equiv 0$, but it is sometimes convenient to have
more flexibility in the form of the operator since this means that we
do not need to start from the relatively rigid form
\eqref{eq:conf-comp-Lap-form}.

Writing covectors as
$$
\xi\,d\mu+\eta\,dy,
$$ 
the principal symbol of $P_\sigma\in\Diff^2(\Xext)$, including in the
high energy sense ($\sigma\to\infty$), is
\begin{equation}\begin{split}\label{eq:p-sig-symbol-dS}
p_{\full}&=4(1+a_1)\mu\xi^2-4(1+a_2)\sigman\xi-(1+a_3)\sigman^2+|\eta|_{\mu,y}^2,
\end{split}\end{equation}
and is real for $\sigma$ real.
The Hamilton vector field is
\begin{equation}\begin{split}\label{eq:Ham-vf-p-sig-dS}
\sH_{p_\full}&=4(2(1+a_1)\mu\xi-(1+a_2)\sigman)\pa_\mu+\tilde\sH_{|\eta|^2_{\mu,y}}\\
&\qquad-\Big(4(1+a_1+\mu\frac{\pa a_1}{\pa\mu})\xi^2-4\frac{\pa
  a_2}{\pa\mu}\sigman\xi+\frac{\pa a_3}{\pa\mu}\sigman^2
+\frac{\pa|\eta|^2_{\mu,y}}{\pa\mu}\Big)\pa_\xi\\
&\qquad-\Big(4\frac{\pa a_1}{\pa y}\mu\xi^2-4\frac{\pa a_2}{\pa
  y}\sigman\xi
-\frac{\pa a_3}{\pa y}\sigman^2\Big)\pa_\eta,
\end{split}\end{equation}
where $\tilde\sH$ indicates that this is the Hamilton vector field in
$T^*Y$, i.e.\ with $\mu$ considered a parameter.
Correspondingly, the standard, `classical', principal symbol is
\begin{equation}\begin{split}\label{eq:p-sig-symbol-dS-standard}
p=\sigma_2(P_\sigma)
&=4(1+a_1)\mu\xi^2+|\eta|_{\mu,y}^2,
\end{split}\end{equation}
which is real, independent of $\sigma$,
while the Hamilton vector field is
\begin{equation}\begin{split}\label{eq:Ham-vf-p-sig-dS-standard}
\sH_p&=8(1+a_1)\mu\xi\pa_\mu+\tilde\sH_{|\eta|^2_{\mu,y}}\\
&\qquad-\Big(4(1+a_1+\mu\frac{\pa a_1}{\pa\mu})\xi^2
+\frac{\pa|\eta|^2_{\mu,y}}{\pa\mu}\Big)\pa_\xi-4\frac{\pa a_1}{\pa y}\mu\xi^2\pa_\eta.
\end{split}\end{equation}

It is useful to keep in mind that as
$\Delta_{g_0}-\sigma^2-(\dimnm)^2/4$ is formally self-adjoint relative
to the metric density $|dg_0|$ for $\sigma$ real, so the same holds
for $\mu^{-1/2}(\Delta_{g_0}-\sigma^2-(\dimnm)^2/4)\mu^{-1/2}$ (as
$\mu$ is real), and indeed for
its conjugate by $\mu^{-\imath\sigman/2}(1+\mu)^{\imath\sigman/4}$ for
$\sigma$ real since this is merely unitary conjugation. As for $f$
real, $A$ formally self-adjoint relative to $|dg_0|$, $f^{-1}Af$ is
formally self-adjoint relative to $f^2|dg_0|$, we then deduce that for
$\sigman$ real,
$P_\sigma$ is formally self-adjoint relative to
$$
\mu^{(\dimn+1)/2}|dg_0|=\frac{1}{2}|dh|\,|d\mu|,
$$
as
$x^{-\dimn}\,dx=\frac{1}{2}\mu^{-(\dimn+1)/2}\,d\mu$. Note that
$\mu^{(\dimn+1)/2}|dg_0|$
thus extends to a $\CI$ density to $\Xext$, and we deduce that with
respect to the extended density,
$\sigma_1(\frac{1}{2\imath}(P_\sigma-P_\sigma^*))|_{\mu\geq 0}$ vanishes
when $\sigma\in\RR$. Since in general $P_\sigma-P_{\re\sigma}$ differs
from $-4\imath(1+a_2)\im\sigman D_\mu$ by a zeroth order operator, we
conclude that
\begin{equation}\label{eq:subpr-at-rad}
\sigma_1\Big(\frac{1}{2\imath}(P_\sigma-P_\sigma^*)\Big)\Big|_{\mu=0}=-4(\im\sigman)\xi.
\end{equation}

We still need to check that $\mu$ can be appropriately chosen in the interior
away from the region of validity of the product decomposition \eqref{eq:ah-g-0-prod}
(where we had no requirements so far on $\mu$). This only matters for semiclassical
purposes, and (being smooth and non-zero in the interior)
the factor $\mu^{-1/2}$ multiplying from both sides does not affect
any of the relevant properties (semiclassical ellipticity and possible non-trapping
properties), so can be ignored --- the same is true for $\sigma$-independent
powers of $\mu$.

Thus, near $\mu=0$, but $\mu$ bounded away from $0$, the only
semiclassically non-trivial action we have done was to conjugate the
operator by $e^{-\imath\sigma\phi}$ where
$e^{\phi}=\mu^{1/2}(1+\mu)^{-1/4}$; we need to extend $\phi$ into the interior.
But the semiclassical principal symbol of the conjugated operator is,
with $\sigma=z/h$,
\begin{equation}\label{eq:semicl-pr-hyp}
(\zeta-
z\,d\phi,\zeta-z\,d\phi)_{G_0}-z^2=|\zeta|^2_{G_0}-2z(\zeta,d\phi)_{G_0}
-(1-|d\phi|^2_{G_0})z^2.
\end{equation}
For $z$ non-real this is elliptic if $|d\phi|_{G_0}<1$. Indeed, if \eqref{eq:semicl-pr-hyp}
vanishes then from the vanishing imaginary part we get
\begin{equation}\label{eq:semicl-pr-hyp-im}
2\im z ((\zeta,d\phi)_{G_0}+(1-|d\phi|^2_{G_0})\re z)=0,
\end{equation}
and then the real part is
\begin{equation}\begin{split}\label{eq:semicl-pr-hyp-re}
&|\zeta|^2_{G_0}-2\re z (\zeta,d\phi)_{G_0}
-(1-|d\phi|^2_{G_0})((\re z)^2-(\im z)^2)\\
&=|\zeta|^2_{G_0}
+(1-|d\phi|^2_{G_0})((\re z)^2+(\im z)^2),
\end{split}\end{equation}
which cannot vanish if $|d\phi|_{G_0}<1$.
But, reading off the dual metric from the principal symbol of \eqref{eq:Lap-in-mu},
$$
\frac{1}{4}
\left|d(\log\mu-\frac{1}{2}\log(1+\mu))\right|^2_{G_0}=\left(1-\frac{\mu}{2(1+\mu)}\right)^2<1
$$
for $\mu>0$, with a strict bound as long as $\mu$ is bounded away from
$0$. Correspondingly, $\mu^{1/2}(1+\mu)^{-1/4}$ can be extended to a function
$e^\phi$ on all of $X_0$ so that semiclassical ellipticity for $z$
away from the reals is preserved,
and we may even require that $\phi$ is constant on
a fixed (but arbitrarily large) compact subset of $X_0^\circ$. Then,
after conjugation by $e^{-\imath\sigma\phi}$,
\begin{equation}\label{eq:P-semicl-form}
P_{h,z}=e^{\imath z\phi/h}\mu^{-\dimnppar/4-1/2}(h^2\Delta_{g_0}-z)\mu^{\dimnppar/4-1/2} e^{-\imath z\phi/h}
\end{equation}
is
semiclassically elliptic in $\mu>0$ (as well as in $\mu\leq 0$, $\mu$
near $0$, where this is already guaranteed), as desired.

\begin{rem}\label{rem:bundles}
We have not considered vector bundles over $X_0$.
However, for instance for the Laplacian on the differential form bundles it is
straightforward to check that slightly changing the power of $\mu$ in
the conjugation the resulting operator extends smoothly across $\pa
X_0$, has scalar principal symbol of the form
\eqref{eq:p-sig-symbol-dS},
and the principal symbol of $\frac{1}{2\imath}(P_\sigma-P_\sigma^*)$,
which plays a role below, is also as in
the scalar setting, so all the results in fact go through.
\end{rem}

\subsection{Local dynamics near the radial set}
Let
$$
N^*S\setminus o=\Lambda_+\cup\Lambda_-,
\qquad \Lambda_\pm=N^*S\cap\{\pm\xi>0\},\qquad S=\{\mu=0\};
$$
thus $S\subset\Xext$ can be identified with $Y=\pa X_0(=\pa X_{0,\even})$.
Note that $p=0$ at $\Lambda_\pm$ and $\sH_p$ is radial there since
$$
N^*S=\{(\mu,y,\xi,\eta):\ \mu=0,\ \eta=0\},
$$
so
$$
\sH_p|_{N^*S}=-4\xi^2\pa_\xi.
$$
This  corresponds to $dp=4\xi^2\,d\mu$ at $N^*S$, so the
characteristic set $\Sigma=\{p=0\}$ is smooth at $N^*S$.

Let $L_\pm$ be the image of $\Lambda_\pm$ in $S^*\Xext$.
Next we analyze the Hamilton flow at $\Lambda_\pm$. First,
\begin{equation}\label{eq:eta-loc}
\sH_p|\eta|^2_{\mu,y}=8(1+a_1)\mu\xi\pa_\mu|\eta|^2_{\mu,y}-4\frac{\pa a_1}{\pa y}\mu\xi^2\cdot_h\eta
\end{equation}
and
\begin{equation}\label{eq:mu-exact-ev}
\sH_p\mu=8(1+a_1)\xi\mu.
\end{equation}
In terms of linearizing the flow at $N^*S$, $p$ and $\mu$ are
equivalent as $dp=4\xi^2\,d\mu$ there,
so one can simply use $\hat p=p/|\xi|^2$ (which is homogeneous
of degree $0$, like $\mu$), in place of $\mu$.
Finally,
\begin{equation}
\sH_p|\xi|=-4\sgn(\xi)+b,
\end{equation}
with $b$ vanishing at $\Lambda_\pm$.

\begin{figure}[ht]
\begin{center}
\mbox{\epsfig{file=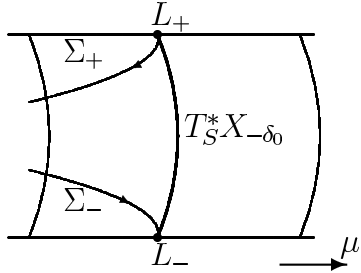}}
\end{center}
\caption{The cotangent bundle of $\Xext$ near $S=\{\mu=0\}$.
It is drawn in a fiber-radially compactified view. The boundary of the fiber
compactificaton is the cosphere
bundle $S^*\Xext$; it is the surface of the cylinder shown.
$\Sigma_\pm$ are the components of the (classical) characteristic set containing
$L_\pm$. They lie in $\mu\leq 0$, only meeting $S^*_S\Xext$ at $L_\pm$.
Semiclassically, i.e.\ in the interior of $\overline{T}^*\Xext$, for $z=h^{-1}\sigma>0$,
only the component of the semiclassical characteristic set containing $L_+$ can
enter $\mu>0$. This is reversed for $z<0$.}
\label{fig:event-horizon-bundle1}
\end{figure}

It is convenient to rehomogenize \eqref{eq:eta-loc} in terms
of $\hat\eta=\eta/|\xi|$. This can be phrased more invariantly by
working with $S^*\Xext=(T^*\Xext\setminus o)/\RR^+$, briefly discussed
in Section~\ref{sec:notation}. Let $L_\pm$ be the image of
$\Lambda_\pm$ in $S^*\Xext$. Homogeneous degree zero
functions on $T^*\Xext\setminus o$, such as $\hat p$,
can be regarded as functions on $S^*\Xext$. For semiclassical purposes, it
is best to consider $S^*\Xext$ as the boundary at fiber infinity of the
fiber-radial compactification $\overline{T}^*\Xext$ of $T^*\Xext$,
also discussed in Section~\ref{sec:notation}.
Then at fiber infinity near $N^*S$, we can take $(|\xi|^{-1},\etah)$
as (projective, rather than polar)
coordinates on the fibers of the cotangent bundle, with $\rhot=|\xi|^{-1}$
defining $S^*\Xext$ in $\overline{T}^*\Xext$.
Then $W=|\xi|^{-1}\sH_p$ is a $\CI$ vector field in this region and
\begin{equation}\label{eq:eta-hat-loc}
|\xi|^{-1}\sH_p|\hat\eta|^2_{\mu,y}
=2|\hat\eta|^2_{\mu,y}\sH_p|\xi|^{-1}+|\xi|^{-3}\sH_p|\eta|^2_{\mu,y}
=8(\sgn\xi) |\hat\eta|^2+\tilde a,
\end{equation}
where $\tilde a$ vanishes cubically at $N^*S$.
In similar notation we have
\begin{equation}\label{eq:weight-definite}
\sH_p\tilde\rho=4\sgn(\xi)+\tilde a',\qquad \tilde\rho=|\xi|^{-1},
\end{equation}
and
\begin{equation}\label{eq:dS-Ham-2}
|\xi|^{-1}\sH_p\mu=8(\sgn\xi)\mu+\tilde a'',
\end{equation}
with $\tilde a'$ smooth (indeed, homogeneous degree zero without the
compactification) vanishing at $N^*S$, and $\tilde a''$ is also
smooth, vanishing quadratically at $N^*S$.
As the vanishing of $\hat\eta,|\xi|^{-1}$ and $\mu$ defines
$\pa N^*S$, we conclude that
$L_-=\pa\Lambda_-$ is a sink, while $L_+=\pa\Lambda_+$ is a source,
in the sense that all nearby
bicharacteristics (in fact, including semiclassical (null)bicharacteristics, since
$\sH_p|\xi|^{-1}$ contains the additional information needed; see
\eqref{eq:Ham-vf-p-h-oT}) converge
to $L_\pm$ as the parameter along the bicharacteristic goes to $\mp\infty$.
In particular, the quadratic defining function of $L_\pm$ given by 
$$
\rho_0=\widehat{\tilde p}+\hat p^2,\ \text{where}\ \hat p=|\xi|^{-2}p,
\ \widehat{\tilde p}=|\etah|^2,
$$
satisfies
\begin{equation}\label{eq:rho-0-property}
(\sgn\xi)W\rho_0\geq 8\rho_0+\cO(\rho_0^{3/2}).
\end{equation}

We also need information on the principal symbol of
$\frac{1}{2\imath}(P_\sigma-P_\sigma^*)$
at the radial points. At $L_\pm$ this is given by
\begin{equation}\label{eq:subpr-symbol-at-Lag}
\sigma_1(\frac{1}{2\imath}(P_\sigma-P_\sigma^*))|_{N^*S}=-(4\sgn(\xi))\im\sigman|\xi|;
\end{equation}
here $(4\sgn(\xi))$ is pulled out due to \eqref{eq:weight-definite}, namely its
size relative to $\sH_p|\xi|^{-1}$ matters.
This corresponds to the fact
that $(\mu\pm \imath 0)^{\imath\sigman}$, which are Lagrangian
distributions
associated to $\Lambda_\pm$, solve the PDE \eqref{eq:ah-final-conj-form}
modulo an error that is two orders lower than what one might a priori expect,
i.e.\ $P_\sigma (\mu\pm \imath 0)^{\imath\sigman}\in (\mu\pm \imath 0)^{\imath\sigman}\CI(\Xext)$.
Note that $P_\sigma$ is second order, so one should lose two orders a
priori, i.e.\ get an element of $(\mu\pm \imath 0)^{\imath\sigman-2}\CI(\Xext)$;
the characteristic nature of $\Lambda_\pm$ reduces the loss to $1$, and the
particular choice of exponent eliminates the loss. This has much in common
with $e^{\imath\lambda/x}x^{(n-1)/2}$ being an approximate solution in asymptotically
Euclidean scattering, see \cite{RBMSpec}.

\subsection{Global behavior of the characteristic set}\label{subsec:global-char}
By \eqref{eq:p-sig-symbol-dS-standard}, points with $\xi=0$ cannot lie
in the characteristic set.
Thus, with
$$
\Sigma_\pm=\Sigma\cap\{\pm\xi>0\},
$$
$\Sigma=\Sigma_+\cup\Sigma_-$
and $\Lambda_\pm\subset\Sigma_\pm$. Further, the characteristic set
lies in $\mu\leq 0$, and intersects $\mu=0$ only in $\Lambda_\pm$.

Moreover, as
$\sH_p\mu=8(1+a_1)\xi\mu$
and $\xi\neq 0$ on $\Sigma$, and $\mu$ only vanishes at
$\Lambda_+\cup\Lambda_-$
there, for $\ep_0>0$ sufficiently small the $\CI$ function
$\mu$ provides a negative global escape function on $\mu\geq -\ep_0$
which is decreasing
on $\Sigma_+$, increasing on $\Sigma_-$. Correspondingly, bicharacteristics
in $\Sigma_-$ travel from $\mu=-\ep_0$ to $L_-$, while in $\Sigma_+$ they travel from
$L_+$ to $\mu=-\ep_0$.

\subsection{High energy, or semiclassical, asymptotics}\label{subsec:global-semi}
We are also interested in the high energy behavior, as $|\sigma|\to\infty$.
For the associated semiclassical problem one obtains a family of operators
$$
P_{h,z}=h^2P_{h^{-1}z},
$$
with $h=|\sigma|^{-1}$, and $z$ corresponding to $\sigma/|\sigma|$ in the unit
circle in $\Cx$.
Then the semiclassical principal symbol $p_{\semi,z}$ of $P_{h,z}$ is a function
on $T^*\Xext$, whose asymptotics at fiber infinity of $T^*\Xext$ is given
by the classical principal symbol $p$.
We are interested in $\im\sigma\geq -C$, which in semiclassical
notation corresponds to $\im z\geq -Ch$.
It is sometimes convenient to think of $p_{\semi,z}$, and its rescaled Hamilton vector field,
as objects on $\overline{T}^*\Xext$.
Thus,
\begin{equation}\begin{split}\label{eq:p-h-symbol-dS}
p_{\semi,z}=\sigma_{2,\semi}(P_{h,z})
&=4(1+a_1)\mu\xi^2-4(1+a_2)\zn\xi-(1+a_3)\zn^2+|\eta|_{\mu,y}^2,
\end{split}\end{equation}
so
\begin{equation}\label{eq:im-p-h-symbol-dS}
\im p_{\semi,z}=-2\im \zn(2(1+a_2)\xi+(1+a_3)\re \zn).
\end{equation}
In particular, for $z$ non-real, $\im p_{\semi,z}=0$ implies
$2(1+a_2)\xi+(1+a_3)\re \zn=0$, so
\begin{equation}\begin{split}\label{eq:re-p-h-symbol-dS}
\re p_{\semi,z}=((1+a_1)(1+a_3)^2(1+a_2)^{-2}\mu&+(1+2a_2)(1+a_3))(\re
\zn)^2\\
&+(1+a_3)(\im \zn)^2+|\eta|^2_{\mu,y}>0
\end{split}\end{equation}
near $\mu=0$,
i.e.\ $p_{\semi,z}$ is semiclassically elliptic on $T^*\Xext$, but {\em not}
at fiber infinity, i.e.\ at $S^*\Xext$ (standard ellipticity is lost only
in $\mu\leq 0$, of course). In $\mu>0$ we have semiclassical
ellipticity (and automatically classical ellipticity) by our choice
of $\phi$ following \eqref{eq:semicl-pr-hyp}.
Explicitly, if we introduce
for instance
\begin{equation}\label{eq:oT-coords}
(\mu,y,\nu,\hat\eta),\qquad \nu=|\xi|^{-1},\ \hat\eta=\eta/|\xi|,
\end{equation}
as valid projective coordinates
in a (large!) neighborhood of $L_\pm$ in $\overline {T}^* \Xext$, then
\begin{equation*}\begin{split}
\nu^{2}p_{\semi,z}=4(1+a_1)\mu-4(1+a_2)(\sgn\xi)\zn\nu-(1+a_3)\zn^2\nu^2+|\hat\eta|_{y,\mu}^2
\end{split}\end{equation*}
so
$$
\nu^{2}\im p_{\semi,z}=-4(1+a_2)(\sgn\xi)\nu\im \zn-2(1+a_3)\nu^2\re \zn\im \zn
$$
which automatically vanishes at $\nu=0$, i.e.\ at $S^*\Xext$. Thus,
for $\sigma$ large and pure imaginary, the semiclassical problem adds no
complexity to the `classical' quantum problem, but of course it does not
simplify it. In fact, we need somewhat more information at the characteristic
set, which is thus at $\nu=0$ when $\im z$ is bounded away from $0$:
\begin{equation*}\begin{split}
&\nu\ \text{small},\ \im z\geq 0\Rightarrow (\sgn\xi)\im p_{\semi,z}\leq 0
\Rightarrow \pm\im p_{\semi,z}\leq 0\ \text{near}\ \Sigma_{\semi,\pm},\\
&\nu\ \text{small},\ \im z\leq 0\Rightarrow (\sgn\xi)\im p_{\semi,z}\geq 0
\Rightarrow \pm\im p_{\semi,z}\geq 0\ \text{near}\ \Sigma_{\semi,\pm},\\
\end{split}
\end{equation*}
which, as we recall in Section~\ref{sec:microlocal},
means that for $P_{h,z}$ with $\im z>0$ one can
propagate estimates forwards along the bicharacteristics where $\xi>0$ (in
particular, away from $L_+$, as the latter is a source) and
backwards where $\xi<0$ (in particular, away from $L_-$,
as the latter is a sink), while for $P^*_{h,z}$ the directions are
reversed since its semiclassical symbol is $\overline{p_{\semi,z}}$. The
directions are also reversed if $\im z$ switches sign. This
is important because it gives invertibility for $z=\imath$ (corresponding
to $\im\sigma$ large positive, i.e.\ the physical halfplane), but does not give
invertibility for $z=-\imath$ negative.

We now return to the claim that even semiclassically, for $z$ almost
real (i.e.\ when $z$ is not bounded away from the reals; we are not
fixing $z$ as we let $h$ vary!), when the operator is not semiclassically elliptic on $T^*\Xext$
as mentioned above,
the characteristic
set can be divided into two components $\Sigma_{\semi,\pm}$,
with $L_\pm$ in different
components.
The vanishing of the factor following $\im z$ in
\eqref{eq:im-p-h-symbol-dS} gives a hypersurface that separates $\Sigma_{\semi}$
into two parts. Indeed,
this is the hypersurface given by
\begin{equation}\label{eq:dS-sep-hyp}
2(1+a_2)\xi+(1+a_3)\re \zn=0,
\end{equation}
on which, by \eqref{eq:re-p-h-symbol-dS}, $\re p_{\semi,z}$ cannot
vanish, so
$$
\Sigma_{\semi}=\Sigma_{\semi,+}\cup\Sigma_{\semi,-},
\qquad \Sigma_{\semi,\pm}=\Sigma_{\semi}\cap\{\pm (2(1+a_2)\xi+(1+a_3)\re \zn)>0\}.
$$
Farther in $\mu>0$, the hypersurface is given, due to \eqref{eq:semicl-pr-hyp-im},
by
$$
(\zeta,d\phi)_{G_0}+(1-|d\phi|^2_{G_0})\re z=0,
$$
and on it, by \eqref{eq:semicl-pr-hyp-re}, the real part is
$|\zeta|^2_{G_0}+(1-|d\phi|^2_{G_0})((\re z)^2+(\im z)^2)>0$;
correspondingly
$$
\Sigma_{\semi}=\Sigma_{\semi,+}\cup\Sigma_{\semi,-},
\qquad 
\Sigma_{\semi,\pm}=\Sigma_{\semi}\cap\{\pm ((\zeta,d\phi)_{G_0}+(1-|d\phi|^2_{G_0})\re z)>0\}.
$$
In fact, more generally, the real part is
\begin{equation*}\begin{split}
&|\zeta|^2_{G_0}-2\re z (\zeta,d\phi)_{G_0}-(1-|d\phi|^2_{G_0})((\re
z)^2-(\im z)^2)\\
&=|\zeta|^2_{G_0}-2\re z ((\zeta,d\phi)_{G_0}+(1-|d\phi|^2_{G_0})\re z)
+(1-|d\phi|^2_{G_0})((\re z)^2+(\im z)^2),
\end{split}\end{equation*}
so for $\pm\re z>0$, $\mp((\zeta,d\phi)_{G_0}+(1-|d\phi|^2_{G_0})\re
z)>0$ implies that $p_{\semi,z}$ does not vanish. Correspondingly,
only one of the two components of $\Sigma_{\semi,\pm}$ enter $\mu>0$,
namely for $\re z>0$, only $\Sigma_{\semi,+}$ enters, while for $\re
z<0$, only $\Sigma_{\semi,-}$ enters.

We finally need more information about the global semiclassical
dynamics.

\begin{lemma}
There exists $\ep_0>0$ such that the following holds.
All semiclassical
null-bicharacteristics in $(\Sigma_{\semi,+}\setminus
L_+)\cap\{-\ep_0\leq\mu\leq\ep_0\}$
go to either
$L_+$ or to $\mu=\ep_0$ in the backward direction and to $\mu=\ep_0$ or $\mu=-\ep_0$
in the forward direction, while all semiclassical
null-bicharacteristics
in $(\Sigma_{\semi,-}\setminus L_-)\cap\{-\ep_0\leq\mu\leq\ep_0\}$
go to $L_-$ or $\mu=\ep_0$ in the forward
direction and to $\mu=\ep_0$ or $\mu=-\ep_0$ in the backward direction.

For $\re z>0$, only
$\Sigma_{\semi,+}$
enters $\mu>0$, so the $\mu=\ep_0$ possibility only applies to
$\Sigma_{\semi,+}$ then, while for $\re z<0$, the analogous remark
applies to $\Sigma_{\semi,-}$.
\end{lemma}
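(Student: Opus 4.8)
The strategy is to use the function $\mu$ itself as an escape function in the region $\{-\ep_0 \le \mu \le \ep_0\}$, exactly as in the classical case of Subsection~\ref{subsec:global-char}, but now carefully tracking the semiclassical Hamilton vector field $\sH_{p_{\semi,z}}$ rather than the homogeneous one. First I would compute $\sH_{p_{\semi,z}}\mu$ from \eqref{eq:p-h-symbol-dS}: since $\mu$ depends only on position, $\sH_{p_{\semi,z}}\mu = \partial_\xi p_{\semi,z} = 8(1+a_1)\mu\xi - 4(1+a_2)\zn$. The key point is that on $\Sigma_{\semi,\pm}$ one controls the sign of $2(1+a_2)\xi + (1+a_3)\re\zn$, and one wants to conclude that, for $\ep_0$ small, $\sH_{p_{\semi,z}}\mu$ has the appropriate sign (negative on $\Sigma_{\semi,+}$, positive on $\Sigma_{\semi,-}$) away from $L_\pm$. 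Near $\mu=0$ and away from fiber infinity the dominant term is $-4(1+a_2)\zn$; combined with the defining inequality for $\Sigma_{\semi,\pm}$ and the separating hypersurface \eqref{eq:dS-sep-hyp} on which $\re p_{\semi,z}\ne 0$ (so that $\Sigma_{\semi}$ genuinely avoids it), one gets monotonicity of $\mu$ along bicharacteristics in each component.

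The argument then splits into two regimes. Away from fiber infinity (the semiclassically elliptic region off the characteristic set is irrelevant; we only care about $\Sigma_{\semi}$), $\mu$ is strictly monotone along each null-bicharacteristic in $\Sigma_{\semi,\pm}\setminus L_\pm$ with the stated sign, so a bicharacteristic cannot remain in the compact slab $\{-\ep_0\le\mu\le\ep_0\}$ forever: in the backward direction it must reach $\mu=\ep_0$ (for $\Sigma_{\semi,+}$) or leave toward $L_+$, and symmetrically in forward time it reaches $\mu=\ep_0$ or $\mu=-\ep_0$. Near fiber infinity, i.e.\ near $S^*\Xext$ and hence near $L_\pm$, one uses instead the homogeneous dynamics already analyzed in Subsection~\ref{subsec:global-char}: the rescaled vector field $W=|\xi|^{-1}\sH_p$ governs the limiting behavior (this is exactly the content of \eqref{eq:dS-Ham-2} and \eqref{eq:rho-0-property}, together with the remark that $\sH_p|\xi|^{-1}$ supplies the missing semiclassical information), and $L_+$ is a source, $L_-$ a sink. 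So a bicharacteristic entering a small conic neighborhood of $L_+$ in backward time converges to $L_+$; one patches the two regimes by choosing the fiber-infinity neighborhood small enough that outside it the non-fiber-infinity monotonicity estimate is valid with a uniform positive lower bound on $|\sH_{p_{\semi,z}}\mu|$.

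The last paragraph of the lemma is immediate from the discussion preceding it: for $\re z>0$ the real-part computation \eqref{eq:semicl-pr-hyp-re} (and the more general version displayed just before the lemma) shows $p_{\semi,z}$ cannot vanish in $\mu>0$ on $\Sigma_{\semi,-}$, so $\Sigma_{\semi,-}\cap\{\mu>0\}=\emptyset$; hence for $\Sigma_{\semi,-}$ the endpoint alternative ``$\mu=\ep_0$'' never occurs and bicharacteristics there must instead terminate at $L_-$ (forward) and reach $\mu=-\ep_0$ (backward). The symmetric statement for $\re z<0$ is identical. \textbf{The main obstacle} I anticipate is the uniformity: $z$ ranges over (a neighborhood of) the unit circle and is allowed to approach the reals as $h\to 0$, so the sign/lower-bound estimates on $\sH_{p_{\semi,z}}\mu$ must be uniform in $z$ down to $\im z = 0$ — one cannot use $\im z$ in the denominator. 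This forces the argument for monotonicity of $\mu$ to rely only on the real structure of $p_{\semi,z}$ (the defining inequalities of $\Sigma_{\semi,\pm}$ and the non-vanishing of $\re p_{\semi,z}$ on the separatrix), together with choosing $\ep_0$ small enough, uniformly in $z$, which is exactly where the product structure \eqref{eq:ah-g-0-prod} near $\mu=0$ and the explicit form \eqref{eq:ah-final-conj-form} of $P_\sigma$ get used.
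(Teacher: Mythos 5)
Your plan follows the paper's overall architecture (fiber‑infinity source/sink analysis of $L_\pm$ patched with the escape function $\mu$ away from fiber infinity), but there is a genuine error in the central claim of your second paragraph, and the patching step is missing the key compactness observation.

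The error: you assert that ``$\sH_{p_{\semi,z}}\mu$ has the appropriate sign (negative on $\Sigma_{\semi,+}$, positive on $\Sigma_{\semi,-}$) away from $L_\pm$'', arguing this from the defining inequality of $\Sigma_{\semi,\pm}$ and the separatrix \eqref{eq:dS-sep-hyp}. That reasoning cannot be right: from \eqref{eq:Ham-vf-p-h-dS}, for $z$ real one has
$$
\sH_{p_{\semi,z}}\mu = 4\bigl(2(1+a_1)\mu\xi-(1+a_2)z\bigr),
$$
and at $\mu=0$ this equals $-4(1+a_2)z$, whose sign is determined purely by $\sgn(\re z)$, not by which component of $\Sigma_\semi$ one is in. For $\re z>0$, $\sH_{p_{\semi,z}}\mu<0$ near $\mu=0$ off fiber infinity \emph{uniformly} over the whole characteristic set — there is no sign flip on $\Sigma_{\semi,-}$. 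What saves the day (and what the paper actually uses) is that $\Sigma_{\semi,-}\cap T^*\Xext\cap\{\mu=0\}=\emptyset$ for $\re z>0$: on $\Sigma_{\semi,-}$ one has $4(1+a_2)z\xi+(1+a_3)z^2<0$, and the real-part equation at $\mu=0$ forces $|\eta|^2<0$. So $\Sigma_{\semi,-}$ off fiber infinity is bounded away from $\mu=0$, and the entire dynamics of $\Sigma_{\semi,-}$ inside the slab $\{|\mu|\le\ep_0\}$ takes place in the conic neighborhood $\{\rhot^2+\rho_0<\ep_1\}$ of $L_-$, where the source/sink analysis alone determines the asymptotics. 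Your sign claim for $\Sigma_{\semi,-}$ is therefore vacuous, but since you appear to believe you have proved it, rather than that the region it concerns is empty, the argument for the $\Sigma_{\semi,-}$ alternatives ($L_-$ forward, $\mu=-\ep_0$ backward) is not actually established by your proposal.

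The second gap is in the patching. You say one should pick the fiber-infinity neighborhood so small that ``outside it the non-fiber-infinity monotonicity estimate is valid with a uniform positive lower bound on $|\sH_{p_{\semi,z}}\mu|$''. What one actually needs is the observation that $\Sigma_{\semi}\cap\overline{T}^*\Xext\setminus\{\rhot^2+\rho_0<\ep_1\}$ is a compact subset of $T^*\Xext$ (bounded in the fiber variables), on which $\mu$ is bounded away from $0$ once one removes a small $\mu$-neighborhood $U$ where $\sH_{p_{\semi,z}}\mu<0$ is retained by continuity from $\mu=0$. Only then does one get $\ep_0$ for which the slab is covered by $U$ together with the conic neighborhood, and the case analysis (bicharacteristic disjoint from vs.\ meeting $\{\rhot^2+\rho_0<\ep_1\}$) goes through. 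A lower bound on $|\sH_{p_{\semi,z}}\mu|$ is neither what is needed nor available near $L_\pm$, where this quantity degenerates (one must rescale by $\nu$). Your last paragraph addressing $\re z\gtrless 0$ is correct and essentially the same as the paper's.
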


\begin{proof}
We assume that $\re z>0$ for the sake of definiteness.
Observe that
the semiclassical Hamilton vector field is
\begin{equation}\begin{split}\label{eq:Ham-vf-p-h-dS}
\sH_{p_{\semi,z}}&=4(2(1+a_1)\mu\xi-(1+a_2)\zn)\pa_\mu+\tilde\sH_{|\eta|^2_{\mu,y}}\\
&\qquad-\Big(4(1+a_1+\mu\frac{\pa a_1}{\pa\mu})\xi^2-4\frac{\pa
  a_2}{\pa\mu}\zn\xi+\frac{\pa a_3}{\pa\mu}z^2+\frac{\pa|\eta|^2_{\mu,y}}{\pa\mu}\Big)\pa_\xi\\
&\qquad-\Big(4\frac{\pa a_1}{\pa y}\mu\xi^2-4\frac{\pa a_2}{\pa
  y}z\xi
-\frac{\pa a_3}{\pa y}z^2\Big)\pa_\eta;
\end{split}\end{equation}
here we are concerned about $z$ real.
Near $S^*\Xext=\pa\overline{T}^*\Xext$, using the coordinates
\eqref{eq:oT-coords}
(which are valid near the characteristic set)
\begin{equation}\begin{split}\label{eq:Ham-vf-p-h-oT}
W_\semi=\nu\sH_{p_{\semi,z}}&=4(2(1+a_1)\mu(\sgn\xi)-(1+a_2)\zn\nu)\pa_\mu+\nu\tilde\sH_{|\eta|^2_{\mu,y}}\\
&\qquad+(\sgn\xi)\Big(4(1+a_1+\mu\frac{\pa a_1}{\pa\mu})-4\frac{\pa
  a_2}{\pa\mu}\zn(\sgn\xi)\nu+\frac{\pa a_3}{\pa\mu}z^2\nu^2\\
&\qquad\qquad\qquad\qquad+\frac{\pa|\etah|^2_{\mu,y}}{\pa\mu}\Big)(\nu\pa_\nu+\etah\pa_{\etah})\\
&\qquad-\Big(4\frac{\pa a_1}{\pa y}\mu-4(\sgn\xi)\frac{\pa a_2}{\pa
  y}z\nu
-\frac{\pa a_3}{\pa y}z^2\nu^2\Big)\pa_{\etah},
\end{split}\end{equation}
with $\nu\tilde\sH_{|\eta|^2_{\mu,y}}=\sum_{ij}
H_{ij}\etah_i\pa_{y_j}-\sum_{ijk} \frac{\pa H_{ij}}{\pa
  y_k}\etah_i\etah_j\pa_{\etah_k}$ smooth. Thus, $W_\semi$ is a smooth
vector field on the compactified cotangent bundle,
$\overline{T}^*\Xext$ which is tangent to its boundary, $S^*\Xext$,
and $W_\semi-W=\nu W^\sharp$ (with $W$ considered as a homogeneous degree zero
vector field) with $W^\sharp$ smooth and tangent to $S^*\Xext$. In particular,
by \eqref{eq:weight-definite} and \eqref{eq:rho-0-property}, using
that $\rhot^2+\rho_0$ is a quadratic defining function of $L_\pm$,
$$
(\sgn\xi) W_\semi(\rhot^2+\rho_0)\geq
8(\rhot^2+\rho_0)-\cO((\rhot^2+\rho_0)^{3/2})
$$
shows that
there is $\ep_1>0$ such that in $\rhot^2+\rho_0\leq\ep_1$, $\xi>0$,
$\rhot^2+\rho_0$ is strictly increasing along the Hamilton flow except
at $L_+$, while in $\rhot^2+\rho_0\leq\ep_1$, $\xi<0$,
$\rhot^2+\rho_0$ is strictly decreasing along the Hamilton flow except
at $L_-$. Indeed, all null-bicharacteristics in this neighborhood of
$L_\pm$ except the constant ones at $L_\pm$ tend to $L_\pm$ in one
direction and to $\rhot^2+\rho_0=\ep_1$ in the other direction.

Choosing $\ep'_0>0$ sufficiently small, the characteristic set in
$\overline{T}^*\Xext
\cap \{-\ep'_0\leq\mu\leq\ep'_0\}$ is disjoint from
$S^*\Xext\setminus \{\rhot^2+\rho_0\leq\ep_1\}$, and indeed only
contains points in $\Sigma_{\semi,+}$ as $\re z>0$.
Since
$\sH_{p_{\semi,z}}\mu=4(2(1+a_1)\mu\xi-(1+a_2)\zn) $,
it is negative on $\overline{T}^*_{\{\mu=0\}}\Xext\setminus S^*\Xext$.
In
particular, there is a neighborhood $U$ of $\mu=0$ in
$\Sigma_{\semi,+}\setminus S^*\Xext$ on
which the same sign is preserved; since the characteristic set in
$\overline{T}^*\Xext\setminus \{\rhot^2+\rho_0<\ep_1\}$ is compact,
and is indeed a subset of $T^*\Xext\setminus
\{\rhot^2+\rho_0<\ep_1\}$, we deduce that $|\mu|$ is bounded below
on $\Sigma\setminus(U\cup \{\rhot^2+\rho_0<\ep_1\})$, say
$|\mu|\geq\ep''_0>0$ there,
so with $\ep_0=\min(\ep_0',\ep_0'')$,
$\sH_{p_{\semi,z}}\mu<0$ on $\Sigma_{\semi,+}\cap\{-\ep_0\leq\mu\leq \ep_0\}\setminus \{\rhot^2+\rho_0^2<\ep_1\}$.
As $\sH_{p_{\semi,z}}\mu<0$ at
$\mu=0$, bicharacteristics can only cross $\mu=0$ in the outward
direction.

Thus, if $\gamma$ is a bicharacteristic in $\Sigma_{\semi,+}$, there
are two possibilities. If $\gamma$
is disjoint from $\{\rhot^2+\rho_0<\ep_1\}$, it has to go to
$\mu=\ep_0$ in the backward direction and to $\mu=-\ep_0$ in the
forward direction. If $\gamma$ has a point in
$\{\rhot^2+\rho_0<\ep_1\}$, then it has to go to $L_+$ in the
backward direction and to $\rhot^2+\rho_0=\ep_1$ in the forward
direction; if $|\mu|\geq \ep_0$ by the time $\rhot^2+\rho_0=\ep_1$ is
reached,
the result is proved, and otherwise $\sH_{p_{\semi,z}}\mu<0$ in
$\rhot^2+\rho_0\geq\ep_1$,
$|\mu|\leq\ep_0$, shows that the bicharacteristic goes to $\mu=-\ep_0$
in the forward direction.

If $\gamma$ is a bicharacteristic in $\Sigma_{\semi,-}$, only the
second possibility exists, and the bicharacteristic cannot leave
$\{\rhot^2+\rho_0<\ep_1\}$
in $|\mu|\leq\ep_0$, so it reaches $\mu=-\ep_0$ in the backward
direction (as the characteristic set is in $\mu\leq 0$).
\end{proof}

If we assume that $g_0$ is a non-trapping metric, i.e.\
bicharacteristics of $g_0$ in $T^*X_0^\circ\setminus o$ tend to $\pa
X_0$ in both the forward and the backward directions, then $\mu=\ep_0$
can be excluded from the statement of the lemma, and the above
argument gives the following stronger conclusion:
for sufficiently small $\ep_0>0$, and for $\re z>0$,
any bicharacteristic in $\Sigma_{\semi,+}$ in $-\ep_0\leq \mu$ has to go to
$L_+$ in the backward direction, and to $\mu=-\ep_0$
in the forward direction (with the exception of the constant
bicharacteristics at $L_+$), while in $\Sigma_{\semi,-}$, all
bicharacteristics in $-\ep_0\leq\mu$ lie in $-\ep_0\leq
\mu\leq 0$, and go to $L_-$ in the forward direction and to
$\mu=-\ep_0$ in the backward direction (with the exception of the constant
bicharacteristics at $L_-$).

In fact, for applications, it is also useful to remark that for
sufficiently small $\ep_0>0$, and for $\alpha\in T^*X_0$,
\begin{equation}\label{eq:mu-convex-ah}
0<\mu(\alpha)<\ep_0,\ p_{\semi,z}(\alpha)=0\Mand
(\sH_{p_{\semi,z}}\mu)(\alpha)=0\Rightarrow (\sH_{p_{\semi,z}}^2\mu)(\alpha)<0.
\end{equation}
Indeed,
as $\sH_{p_{\semi,z}}\mu=4(2(1+a_1)\mu\xi-(1+a_2)\zn)$, the hypotheses imply
$\zn=2(1+a_1)(1+a_2)^{-1}\mu\xi$ and
\begin{equation*}\begin{split}
0&=p_{\semi,z}\\
&=4(1+a_1)\mu\xi^2-8(1+a_1)\mu\xi^2-4(1+a_1)^2(1+a_2)^{-2}(1+a_3)\mu^2\xi^2
+|\eta|_{\mu,y}^2\\
&=-4(1+a_1)\mu\xi^2-4(1+a_1)^2(1+a_2)^{-2}(1+a_3)\mu^2\xi^2
+|\eta|_{\mu,y}^2,
\end{split}\end{equation*}
so $|\eta|^2_{\mu,y}=4(1+b)\mu\xi^2$, with $b$ vanishing at
$\mu=0$. Thus, at points where $\sH_{p_{\semi,z}}\mu$ vanishes,
writing $a_j=\mu\tilde a_j$,
\begin{equation}\label{eq:mu-convex-ah-2}
\sH_{p_{\semi,z}}^2\mu=8(1+a_1)\mu\sH_{p_{\semi,z}}\xi+8\mu^2\xi
\sH_{p_{\semi,z}}\tilde a_1-4\zn\mu
\sH_{p_{\semi,z}}\tilde a_2=8(1+a_1)\mu\sH_{p_{\semi,z}}\xi+\cO(\mu^2\xi^2).
\end{equation}
Now
\begin{equation*}\begin{split}
\sH_{p_{\semi,z}}\xi&=-(4(1+a_1+\mu\frac{\pa a_1}{\pa\mu})\xi^2-4\frac{\pa
  a_2}{\pa\mu}\zn\xi+\frac{\pa a_3}{\pa\mu}z^2+\frac{\pa|\eta|^2_{\mu,y}}{\pa\mu}).
\end{split}\end{equation*}
Since $\zn\xi$ is $\cO(\mu\xi^2)$ due to $\sH_{p_{\semi,z}}\mu=0$,
$\zn^2$ is $\cO(\mu^2\xi^2)$ for the same reason, and
$|\eta|^2$ and $\pa_\mu|\eta|^2$ are $\cO(\mu\xi^2)$ due to
$p_{\semi,z}=0$,
we deduce that $\sH_{p_{\semi,z}}\xi<0$ for sufficiently small
$|\mu|$, so
\eqref{eq:mu-convex-ah-2} implies \eqref{eq:mu-convex-ah}.
Thus, $\mu$ can be used for gluing
constructions as in \cite{Datchev-Vasy:Gluing-prop}.

\subsection{Complex absorption}\label{subsec:complex-absorb-dS}
The final step of fitting $P_\sigma$ into our general microlocal framework is
moving the problem to a compact manifold, and adding a complex absorbing
second order operator. We thus consider a compact manifold without boundary
$X$ for which $X_{\mu_0}=\{\mu>\mu_0\}$, $\mu_0=-\ep_0<0$, with
$\ep_0>0$ as above, is identified as an open subset with smooth boundary;
it is convenient to take $X$ to be the double
of $X_{\mu_0}$, so there are two copies of $X_{0,\even}$ in $X$.

In the case of hyperbolic space, this doubling process can be realized
from the perspective of $(n+1)$-dimensional Minkowski space. Then, as
mentioned in the introduction, the Poincar\'e model shows up in two
copies, namely in the interior of the future and past light cone
inside the sphere at infinity, while de Sitter space as the
`equatorial belt', i.e.\ the exterior of the light cone at the sphere
at infinity. One can take the Minkowski equatorial plane, $t=0$, as
$\mu=\mu_0$, and place the complex absorption there, thereby
decoupling the future and past hemispheres. See
\cite{Vasy-Dyatlov:Microlocal-Kerr} for more detail.

It is convenient to separate the `classical' (i.e.\ quantum!) and `semiclassical'
problems, for in the former setting trapping for $g_0$ does not matter, while in the
latter it does.

\begin{figure}[ht]
\begin{center}
\mbox{\epsfig{file=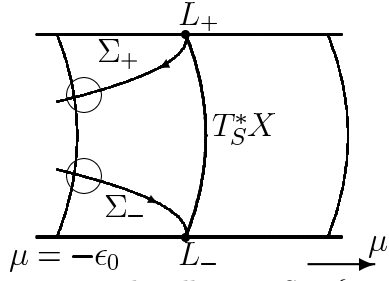}}
\end{center}
\caption{The cotangent bundle near $S=\{\mu=0\}$.
It is drawn in a fiber-radially compactified view, as in Figure~\ref{fig:event-horizon-bundle1}. The circles on the left show the support of $q$; it has opposite signs on
the two disks corresponding to the opposite directions of propagation relative
to the Hamilton vector field.}
\label{fig:event-horizon-damp1}
\end{figure}

We then introduce a `complex
absorption' operator $Q_\sigma\in\Psi_{\cl}^2(X)$
with real principal symbol $q$ supported in, say, $\mu<-\ep_1$, with
the Schwartz kernel also supported in the corresponding region (i.e.\
in both factors on the product space this condition holds on the support)
such that $p\pm\imath q$
is elliptic near $\pa X_{\mu_0}$, i.e.\ near $\mu=\mu_0$,
and which satisfies that $\pm q\geq 0$ near $\Sigma_{\pm}$. This can
easily be done since $\Sigma_\pm$ are disjoint, and away from these
$p$ is elliptic, hence so is $p\pm \imath q$ regardless of the choice
of $q$; we simply need to make $q$ to have support sufficiently close
to $\Sigma_\pm$, elliptic on $\Sigma_\pm$ at $\mu=-\ep_0$,
with the appropriate sign near $\Sigma_\pm$.
Having done this, we extend $p$ and $q$ to $X$ in such a way that
$p\pm\imath q$ are elliptic near $\pa X_{\mu_0}$; the region
we added is thus irrelevant at the level of bicharacteristic dynamics
(of $p$) in so far as it is decoupled from the dynamics in $X_0$,
and indeed also for analysis as we see shortly (in so far as we have
two essentially decoupled copies of the same problem). This is accomplished,
for instance, by using the doubling construction to define $p$ on
$X\setminus X_{\mu_0}$ (in a smooth fashion at $\pa X_{\mu_0}$,
as can be easily arranged; the holomorphic dependence of $P_\sigma$ on
$\sigma$ is still easily preserved), and
then, noting that the characteristic set of $p$ still has two
connected components, making $q$ elliptic on the characteristic set of
$p$ near $\pa X_{\mu_0}$, with the same sign in each component as near
$\pa X_{\mu_0}$.
(An alternative would be to make $q$ elliptic on the characteristic set of
$p$ near $X\setminus X_{\mu_0}$; it is just slightly more complicated
to write down such a $q$ when the high energy behavior is taken into
account.
With the present choice, due to the
doubling, there are essentially two copies of the problem on $X_0$:
the original, and the one from the doubling.)
Finally we take $Q_\sigma$ be
any operator with principal symbol $q$ with Schwartz kernel satisfying
the desired support conditions and which depends on $\sigma$
holomorphically.
We may choose $Q_\sigma$ to be independent of $\sigma$ so $Q_\sigma$
is indeed holomorphic; in this case we may further replace it by
$\frac{1}{2}( Q_\sigma+Q_\sigma^*)$ if self-adjointness is desired.

In view of Subsection~\ref{subsec:global-char} we have arranged the following.
For $\alpha\in S^*X\cap\Sigma$,
let $\gamma_+(\alpha)$, resp.\ $\gamma_-(\alpha)$
denote the image of the forward, resp.\ backward,
half-bicharacteristic of $p$
from $\alpha$. We write $\gamma_\pm(\alpha)\to L_\pm$
(and say $\gamma_\pm(\alpha)$ tends to $L_\pm$) if given any neighborhood $O$
of $L_\pm$, $\gamma_\pm(\alpha)\cap O\neq\emptyset$; by the source/sink property
this implies that the points on the curve are in $O$ for sufficiently large (in absolute
value) parameter values. Then, with $\elliptic(Q_\sigma)$
denoting the elliptic set of $Q_\sigma$,
\begin{multline}\label{eq:classical-non-trap}
\alpha\in\Sigma_-\setminus L_-\Rightarrow \gamma_+(\alpha)\to L_-\Mand
\gamma_-(\alpha)\cap\elliptic(Q_\sigma)\neq\emptyset,\\
\alpha\in\Sigma_+\setminus L_+\Rightarrow \gamma_-(\alpha)\to L_+\Mand
\gamma_+(\alpha)\cap\elliptic(Q_\sigma)\neq\emptyset.\\
\end{multline}
That is, all forward and backward half-(null)bicharacteristics of
$P_\sigma$ either enter the elliptic set of $Q_\sigma$, or go to $\Lambda_\pm$,
i.e.\ $L_\pm$ in $S^*X$.
The point of the arrangements
regarding $Q_\sigma$ and the flow is that we are able to propagate estimates
forward near where $q\geq 0$, backward near where $q\leq 0$, so by our hypotheses
we can always propagate estimates for $P_\sigma-\imath Q_\sigma$
from $\Lambda_\pm$ towards the elliptic set of
$Q_\sigma$.
On the
other hand, for $P_\sigma^*+\imath Q_\sigma^*$, we can propagate estimates
from the elliptic set of $Q_\sigma$ towards $\Lambda_\pm$.
This behavior
of $P_\sigma-\imath Q_\sigma$ vs.\ $P_\sigma^*+\imath Q_\sigma^*$ is important
for duality reasons.

An alternative to the complex absorption would be simply adding a boundary at $\mu=\mu_0$;
this is easy to do since this is a space-like hypersurface, but this is
slightly unpleasant from the point of view of microlocal analysis as one has
to work on a manifold with boundary (though as mentioned this is easily done,
see \cite{Vasy-Dyatlov:Microlocal-Kerr}).

For the semiclassical problem, when $z$ is almost real (namely
when $\im z$ is bounded away from $0$ we only need to make sure we do
not mess up the semiclassical ellipticity in $T^*\Xext$)
we need to increase the requirements on $Q_\sigma$, and what we need
to do depends on whether $g_0$ is non-trapping.

If $g_0$ is non-trapping, we choose $Q_\sigma$
such that $h^2Q_{h^{-1}z}\in\Psihcl^2(X)$ with
semiclassical principal symbol $q_{\semi,z}$, and in addition to the
above requirement for the classical symbol, we need
semiclassical ellipticity
near $\mu=\mu_0$, i.e.\ that
$p_{\semi,z}-\imath q_{\semi,z}$ and its complex conjugate
are elliptic near $\pa X_{\mu_0}$, i.e.\ near $\mu=\mu_0$,
and which satisfies that for $z$ real $\pm q_{\semi,z}\geq 0$ on $\Sigma_{\semi,\pm}$.
Again, we extend $P_\sigma$ and $Q_\sigma$ to $X$ in such a way that
$p-\imath q$
and $p_{\semi,z}-\imath q_{\semi,z}$ (and thus their complex
conjugates)
are elliptic near $\pa X_{\mu_0}$; the region
we added is thus irrelevant. This is straightforward to arrange if one
ignores that one wants $Q_\sigma$ to be holomorphic: one easily constructs a
function $q_{\semi,z}$ on $T^*X$ (taking into account the disjointness of
$\Sigma_{\semi,\pm}$), and defines $Q_{h^{-1}z}$ to be $h^{-2}$
times the semiclassical quantization of $q_{\semi,z}$ (or any other
operator with the same semiclassical and standard principal symbols). Indeed, for our
purposes this would suffice since we want high energy estimates for
the analytic continuation resolvent on the original space $X_0$ (which
we will know exists by the non-semiclassical argument), and as we shall see, the
resolvent is given by the same formula in terms of $(P_\sigma-\imath
Q_\sigma)^{-1}$ independently whether $Q_\sigma$ is holomorphic in
$\sigma$ (as long as it satisfies the other properties), so there is
no need to ensure the holomorphy of $Q_\sigma$. However, it is
instructive to have an example of a holomorphic family $Q_\sigma$ in a
strip at least: in view of \eqref{eq:im-p-h-symbol-dS} we can take
(with $C>0$)
$$ 
q_{h,z}=2(2(1+a_2)\xi+(1+a_3)\zn)(\xi^2+|\eta|^2+z^2+C^2h^2)^{1/2}\chi(\mu),
$$ 
where $\chi\geq 0$ is supported near $\mu_0$; the corresponding full
symbol is
$$
\sigma_{\full}(Q_\sigma)=2(2(1+a_2)\xi+(1+a_3)\sigma)(\xi^2+|\eta|^2+\sigma^2)^{1/2}\chi(\mu),
$$
and $Q_\sigma$ is taken as a quantization of this full symbol.
Here the square root is defined on $\Cx\setminus[0,-\infty)$, with
real part of the result being positive, and
correspondingly
$q_{h,z}$ is defined away from $h^{-1}z\in\pm \imath
[C,+\infty)$. Note that $\xi^2+|\eta|^2+\sigma^2$ is an elliptic
symbol in $(\xi,\eta,\re\sigma,\im\sigma)$ as long as
$|\im\sigma|<C'|\re\sigma|$, so the corresponding statement also holds
for its square root.
While $q_{h,z}$ is only holomorphic away from $h^{-1}z\in\pm \imath [C,+\infty)$, the
full (and indeed the semiclassical and standard principal) symbols are
actually holomorphic in cones near infinity, and indeed e.g.\ via convolutions
by the Fourier transform of a compactly supported function can be
extended to be holomorphic in $\Cx$, but this is of no importance here.

If $g_0$ is trapping, we need to add complex absorption inside $X_0$
as well, at $\mu=\ep_0$, so we relax the requirement that $Q_\sigma$
is supported in $\mu<-\ep_0/2$ to support in $|\mu|>\ep_0/2$,
but we require in addition to the other
classical requirements that $p_{\semi,z}-\imath q_{\semi,z}$ and its complex conjugate are
elliptic near $\mu=\pm\ep_0$,
and which satisfies that $\pm q_{\semi,z}\geq 0$ on
$\Sigma_{\semi,\pm}$. This can be achieved as above for $\mu$ near
$\mu_0$.
Again, we extend $P_\sigma$ and $Q_\sigma$ to $X$ in such a way that
$p-\imath q$
and $p_{\semi,z}-\imath q_{\semi,z}$ (and thus their complex
conjugates) are elliptic near $\pa X_{\mu_0}$.

In either of these semiclassical cases we have arranged that for
sufficiently small $\delta_0>0$,
$p_{\semi,z}-\imath q_{\semi,z}$ and its complex conjugate are {\em semiclassically
  non-trapping} for $|\im z|<\delta_0$, namely
the bicharacteristics from any point in $\Sigma_{\semi}\setminus (L_+\cup L_-)$
flow to $\elliptic(q_{\semi,z})\cup L_-$ (i.e.\ either enter $\elliptic(q_{\semi,z})$ at some finite
time, or tend to $L_-$) in the forward direction, and to $\elliptic(q_{\semi,z})\cup L_+$
in the backward direction. Here $\delta_0>0$ arises from the
particularly simple choice of $q_{\semi,z}$ for which semiclassical
ellipticity is easy to check for $\im z>0$ (bounded away from $0$) and
small; a more careful analysis would give a specific value of
$\delta_0$, and a more careful choice of $q_{\semi,z}$ would give a
better result.

\section{Microlocal analysis}\label{sec:microlocal}

\subsection{Elliptic and microhyperbolic points}
First, recall the basic elliptic and microhyperbolic regularity
results. Let $\WF^s(u)$ denote the $H^s$ wave front set of a
distribution
$u\in\dist(X)$, i.e.\ $\alpha\notin\WF^s(u)$ if there exists
$A\in\Psi^0(X)$ elliptic at $\alpha$ such that $Au\in H^s$. Elliptic
regularity states that
$$
P_\sigma-\imath Q_\sigma\ \text{elliptic at}\ \alpha,\
\alpha\notin\WF^{s-2}((P_\sigma-\imath Q_\sigma)u)\Rightarrow
\alpha\notin\WF^s(u).
$$
In particular, if $(P_\sigma-\imath Q_\sigma)u\in H^{s-2}$ and
$p-\imath q$ is elliptic at $\alpha$ then $\alpha\notin\WF^s(u)$.
Analogous conclusions apply to $P_\sigma^*+\imath Q_\sigma^*$; since
both $p$ and $q$ are real, $p-\imath q$ is elliptic if and only if
$p+\imath q$ is.

We also have real principal type propagation, in the usual form valid
outside $\supp q$:
$$
\WF^s(u)\setminus(\WF^{s-1}((P_\sigma-\imath Q_\sigma)u)\cup\supp q)
$$
is a union of maximally extended bicharacteristics of $\sH_p$ in the
characteristic set $\Sigma=\{p=0\}$ of $P_\sigma$. Putting it differently,
$$
\alpha\notin\WF^s(u)\cup \WF^{s-1}((P_\sigma-\imath Q_\sigma)u)\cup\supp
q\Rightarrow \tilde\gamma(\alpha)\cap\WF^s(u)=\emptyset,
$$
where $\tilde\gamma(\alpha)$ is the component of the bicharacteristic $\gamma(\alpha)$
of $p$ in the complement of $\WF^{s-1}((P_\sigma-\imath Q_\sigma)u)\cup\supp
q$. If $(P_\sigma-\imath Q_\sigma)u\in H^{s-1}$, then $\WF^{s-1}((P_\sigma-\imath
Q_\sigma)u)=\emptyset$ can be dropped from all statements above; if
$q=0$ one can thus replace $\tilde\gamma$ by $\gamma$.

In general, the result does not hold for non-zero $q$. However, it
holds in one direction (backward/forward) of propagation along $\sH_p$
if $q$ has the correct sign. Thus, let $\tilde\gamma_\pm(\alpha)$ be a
forward ($+$) or backward ($-$) bicharacteristic from $\alpha$,
defined on an interval $I$. If $\pm q\geq 0$ on a neighborhood of
$\tilde\gamma_\pm(\alpha)$ (i.e.\ $q\geq 0$ on a neighborhood of
$\tilde\gamma_+(\alpha)$, or $q\leq 0$ on a neighborhood of
$\tilde\gamma_-(\alpha)$) then (for the corresponding sign)
$$
\alpha\notin\WF^s(u)\Mand  \WF^{s-1}((P_\sigma-\imath Q_\sigma)u)\cap\tilde\gamma_\pm(\alpha)=\emptyset\Rightarrow \tilde\gamma_\pm(\alpha)\cap\WF^s(u)=\emptyset,
$$
i.e.\ one can propagate regularity forward if $q\geq 0$, backward if
$q\leq 0$. A proof of this claim that is completely analogous to
H\"ormander's positive commutator proof in the real principal type
setting can easily be given; see \cite{Nonnenmacher-Zworski:Quantum}
and \cite{Datchev-Vasy:Gluing-prop} in the semiclassical setting;
the changes are minor in the `classical' setting. Note that at points
where $q\neq 0$, just $\alpha\notin \WF^{s-1}((P_\sigma-\imath
Q_\sigma)u)$ implies $\alpha\notin\WF^{s+1}(u)$ (stronger than stated
above), but at points with $q=0$ such an elliptic estimate is
unavailable (unless $P_\sigma$ is elliptic).

As $P_\sigma^*+\imath Q_\sigma^*$ has symbol $p+\imath q$, one can
propagate regularity in the opposite direction as compared to
$P_\sigma-\imath Q_\sigma$.
Thus, if $\mp q\geq 0$ on a neighborhood of
$\tilde\gamma_\pm(\alpha)$ (i.e.\ $q\leq 0$ on a neighborhood of
$\tilde\gamma_+(\alpha)$, or $q\geq 0$ on a neighborhood of
$\tilde\gamma_-(\alpha)$) then (for the corresponding sign)
$$
\alpha\notin\WF^s(u)\Mand  \WF^{s-1}((P_\sigma^*+\imath Q_\sigma^*)u)\cap\tilde\gamma_\pm(\alpha)=\emptyset\Rightarrow \tilde\gamma_\pm(\alpha)\cap\WF^s(u)=\emptyset.
$$

\subsection{Analysis near $\Lambda_\pm$}

The last ingredient in the classical setting is an analogue of Melrose's regularity
result at radial sets which have the same features as ours. Although
it is not stated in this generality in Melrose's paper \cite{RBMSpec},
the proof is easily adapted. Thus, the results are:

At $\Lambda_\pm$, for $s\geq m>(\diffordm-\im\sigma)/2$, we can propagate estimates
{\em away} from $\Lambda_\pm$:

\begin{prop}\label{prop:micro-out}
Suppose $s\geq m>(\diffordm-\im\sigma)/2$, and
$\WF^m(u)\cap \Lambda_\pm=\emptyset$. Then
$$
\Lambda_\pm\cap\WF^{s-1}(P_\sigma u)=\emptyset\Rightarrow \Lambda_\pm\cap\WF^s(u)=\emptyset.
$$
\end{prop}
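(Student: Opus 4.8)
The plan is to prove the statement microlocally near $L_\pm\subset S^*\Xext$ by a positive commutator estimate, adapting Melrose's treatment of radial sets in \cite[Section~9]{RBMSpec}. Since $L_+$ and $L_-$ are interchanged by $\xi\mapsto-\xi$, which also reverses the Hamilton flow, it is enough to treat, say, $L_+$; and since $P_\sigma$ is elliptic away from $\Sigma$, microlocal elliptic regularity lets me assume from the start that $u$ is microlocally in $H^s$ off $\Sigma$, so the work is confined to a small conic neighborhood of $\Lambda_+$ inside $\Sigma$. I would take the commutant
\[
\mathsf A=\Op(a),\qquad a=|\xi|^{s-1/2}\,\chi(\rho_0/\delta)\,\psi,
\]
where $\rho_0$ is the quadratic defining function of $L_+$ from \eqref{eq:rho-0-property}, $\chi\in\CI_c(\RR)$ is nonnegative, equal to $1$ near $0$, with $\chi'\le 0$ on $[0,\infty)$, $\delta>0$ is small, and $\psi$ is a homogeneous-degree-zero cutoff equal to $1$ near $L_+$ and supported near $\Lambda_+$. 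Because $u$ is known a priori only to be microlocally in $H^m$ near $L_+$ with $m\le s$, I would insert a regularizer $\Lambda_\ep=\Op\big((1+\ep|\xi|^2)^{-\kappa}\big)$, $\ep\in(0,1]$, of small order $-2\kappa$, and run the argument with $\mathsf A_\ep=\mathsf A\Lambda_\ep$ so that all pairings are finite, letting $\ep\to0$ at the end. (Throughout, Sobolev norms are understood microlocally on a fixed neighborhood of $L_+$.)

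The core of the argument is the principal symbol of the symmetrized commutator, a self-adjoint operator,
\[
\mathcal B_\ep=\imath\big(\mathsf A_\ep^*\mathsf A_\ep P_\sigma-P_\sigma^*\mathsf A_\ep^*\mathsf A_\ep\big)=\imath[\mathsf A_\ep^*\mathsf A_\ep,P_\sigma^*]+\imath\,\mathsf A_\ep^*\mathsf A_\ep(P_\sigma-P_\sigma^*),
\]
whose symbol is $-\sH_p(a^2\lambda_\ep^2)-2a^2\lambda_\ep^2\,\sigma_1\!\big(\tfrac{1}{2\imath}(P_\sigma-P_\sigma^*)\big)$, with $\lambda_\ep$ the symbol of $\Lambda_\ep$. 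Evaluating near $L_+$ using $\sH_p|_{N^*S}=-4\xi^2\pa_\xi$, the relation $\sH_p\tilde\rho=4\sgn\xi+\tilde a'$ of \eqref{eq:weight-definite}, and $\sigma_1\!\big(\tfrac{1}{2\imath}(P_\sigma-P_\sigma^*)\big)|_{N^*S}=-(4\sgn\xi)\im\sigma\,|\xi|$ of \eqref{eq:subpr-symbol-at-Lag}, the leading contribution on $\{\psi=1\}$ is a multiple of $|\xi|^{2s}\chi(\rho_0/\delta)^2$ whose coefficient is positive precisely under the threshold hypothesis $s\ge m>(\diffordm-\im\sigma)/2$; there is in addition a term proportional to $-\chi'(\rho_0/\delta)\chi(\rho_0/\delta)(\sgn\xi)W\rho_0$, which is nonnegative on $\supp a$ because $L_+$ is a source (so by \eqref{eq:rho-0-property} $\rho_0$ increases along the forward flow, hence $\chi(\rho_0/\delta)$ decreases), and a regularization correction bounded by $\ep|\xi|^2(1+\ep|\xi|^2)^{-1}\le 1$ times the main term, hence dominated once $\kappa$ is taken small (again using strictness of the threshold). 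So, modulo a symbol microsupported off $L_+$, $\sigma(\mathcal B_\ep)$ is a nonnegative symbol elliptic of order $2s$ near $L_+$, with bounds uniform in $\ep$, and a sharp Gårding inequality gives $\langle\mathcal B_\ep u,u\rangle\ge c\|\mathsf B_\ep u\|_{L^2}^2-C\|u\|_{H^{s-1/2}}^2$, with $\mathsf B_\ep\in\Psi^s$ uniformly bounded and elliptic near $L_+$.

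On the other hand $\langle\mathcal B_\ep u,u\rangle=2\re\langle\imath\mathsf A_\ep^*\mathsf A_\ep P_\sigma u,u\rangle$, which, using $\Lambda_+\cap\WF^{s-1}(P_\sigma u)=\emptyset$ and splitting orders so that $P_\sigma u$ enters in $H^{s-1}$, is bounded by $\tfrac c2\|\mathsf B_\ep u\|_{L^2}^2+C\|P_\sigma u\|_{H^{s-1}}^2+(\text{lower order in }u)$. Absorbing $\|\mathsf B_\ep u\|^2$, letting $\ep\to0$ with all constants uniform, and invoking elliptic regularity off $\Sigma$, I obtain the a priori estimate $\|u\|_{H^s}\lesssim\|P_\sigma u\|_{H^{s-1}}+\|u\|_{H^{s-1/2}}$, valid whenever the right side is finite. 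This is not yet the conclusion because of the $H^{s-1/2}$ term, but applying it first with $s$ replaced by $\min(m+\tfrac12,s)$ — for which the error lies in $H^{\min(m,\,s-1/2)}\subseteq H^m$ and is controlled by hypothesis — and then iterating in steps of $\tfrac12$, I reach $H^s$ near $L_+$ after finitely many steps; the Gårding positivity holds at each step since all intermediate orders are $\ge m>(\diffordm-\im\sigma)/2$. The step I expect to be the main obstacle is keeping the signs consistent throughout — the interplay of the source structure of $L_+$, the sign with which $\im\sigma$ enters through $\sigma_1(\tfrac{1}{2\imath}(P_\sigma-P_\sigma^*))$, and the threshold — together with making the Gårding bound for $\mathcal B_\ep$ uniform in $\ep$; strictness of $m>(\diffordm-\im\sigma)/2$ is exactly what supplies the room for both.
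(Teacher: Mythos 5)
Your proposal is correct and takes essentially the same approach as the paper: a positive commutator argument with a commutant of order $s-1/2$ localized by a cutoff in the quadratic defining function $\rho_0$ of $L_\pm$, regularized by a small negative-order factor that is admissible precisely because $m$ is strictly above threshold, with the contribution of $\sigma_1\big(\tfrac{1}{2\imath}(P_\sigma-P_\sigma^*)\big)$ shifting that threshold, followed by the standard $\tfrac12$-step bootstrap. The only (cosmetic) differences are that the paper chooses $\phi$ with $\sqrt{-\phi\phi'}$ smooth so as to avoid invoking sharp G\aa rding, and inserts one further regularizer to justify the integration by parts in the pairing, whereas you appeal to sharp G\aa rding directly and leave the pairing regularization implicit.
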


This is completely analogous to Melrose's estimates in asymptotically Euclidean
scattering theory at the radial sets \cite[Section~9]{RBMSpec}.
Note that the $H^s$ regularity of $u$ at $\Lambda_\pm$
is `free' in the
sense that we do not need to impose $H^s$ assumptions on $u$ anywhere; merely
$H^m$ at $\Lambda_\pm$ does the job; of course, on $P_\sigma u$ one must make the
$H^{s-\diffordm}$ assumption, i.e.\ the loss of one derivative compared to the elliptic setting.
At the cost of changing regularity, one can
propagate estimate {\em towards} $\Lambda_\pm$. Keeping in mind that
taking $P_\sigma^*$ in place of $P_\sigma$, principal symbol of
$\frac{1}{2\imath}(P_\sigma-P_\sigma^*)$
switches sign,
we have the following:

\begin{prop}\label{prop:micro-in}
For $s<(\diffordm+\im\sigma)/2$, and $O$ a neighborhood of
$\Lambda_\pm$,
$$
\WF^s(u)\cap (O\setminus\Lambda_\pm)=\emptyset,\ \WF^{s-1}(P_\sigma^*
u)\cap\Lambda_\pm=\emptyset\Rightarrow \WF^s(u)\cap\Lambda_\pm=\emptyset.
$$
\end{prop}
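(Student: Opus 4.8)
The plan is a positive-commutator estimate at $\Lambda_\pm$, which is the mirror image --- for the adjoint $P_\sigma^*$, and in the complementary Sobolev range --- of the estimate underlying Proposition~\ref{prop:micro-out}; it is the exact analogue of Melrose's ``sink'' radial estimate in \cite[Section~9]{RBMSpec}. Where Proposition~\ref{prop:micro-out} propagates regularity \emph{out of} $\Lambda_\pm$ (so $s$ may be taken arbitrarily large, only the weak hypothesis $\WF^m(u)\cap\Lambda_\pm=\emptyset$ being needed), here one propagates \emph{into} $\Lambda_\pm$ from the punctured neighbourhood $O\setminus\Lambda_\pm$; this reverses the sign forced on the commutant and hence produces the \emph{upper} bound $s<(\diffordm+\im\sigma)/2$. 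I would work near one component, say $\Lambda_+$ (so $\xi>0$; $\Lambda_-$ is handled by the reflection $\xi\mapsto-\xi$), in the coordinates $(\mu,y,\tilde\rho,\hat\eta)$ of \eqref{eq:oT-coords}, using the quadratic defining function $\rho_0=|\hat\eta|^2+\hat p^2$ of $L_+$ appearing in \eqref{eq:rho-0-property}.

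Concretely, I would choose a commutant $A=A_\sigma\in\Psi^{s-1/2}(X)$, elliptic at $\Lambda_+$, whose symbol near $L_+$ is $\tilde\rho^{-(s-1/2)}\chi(\rho_0/\delta)$, with $\chi\ge0$, $\chi(0)=1$, $\chi'\le0$, arranged so that $\supp d\chi$ --- the only place where, after rescaling by $\tilde\rho$, the commutant symbol is not constant along the flow --- lies in $O\setminus\Lambda_+$, where $\WF^s(u)$ is empty by hypothesis. Since $u$ is a priori merely a distribution, regularize with $A_\epsilon=A\Lambda_\epsilon$, where $\Lambda_\epsilon$ is uniformly bounded in $\Psi^0(X)$, converges to $\Id$, and lies in $\Psi^{-\infty}(X)$ for $\epsilon>0$, so that the pairings below are finite. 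Pairing $P_\sigma^*u$ against $A_\epsilon^*A_\epsilon u$, and using that $A_\epsilon^*A_\epsilon$ is self-adjoint, yields an identity whose right side is $\big\langle\tfrac{1}{2\imath}\big(P_\sigma A_\epsilon^*A_\epsilon-A_\epsilon^*A_\epsilon P_\sigma^*\big)u,u\big\rangle$; the principal symbol of the operator appearing here is $-\tfrac12\sH_p(a_\epsilon^2)+a_\epsilon^2\,\sigma_1\!\big(\tfrac{1}{2\imath}(P_\sigma-P_\sigma^*)\big)$, modulo one lower order and the commutators with $\Lambda_\epsilon$. A computation using \eqref{eq:weight-definite} to differentiate the weight and \eqref{eq:subpr-symbol-at-Lag} for the skew-adjoint term shows that, on $\Lambda_+$, this symbol is a fixed nonzero multiple of $\big((\diffordm+\im\sigma)/2-s\big)\,\tilde\rho^{-2s}$ --- hence of a single definite sign precisely when $s<(\diffordm+\im\sigma)/2$ --- together with a term proportional to $\chi'(\rho_0/\delta)\,\sH_p\rho_0$ which, by \eqref{eq:rho-0-property}, has a definite sign and is supported in $O\setminus\Lambda_+$. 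The sharp G{\aa}rding inequality then bounds $\|A_\epsilon u\|_{L^2}^2$, uniformly in $\epsilon$, in terms of: $|\langle P_\sigma^*u,A_\epsilon^*A_\epsilon u\rangle|$, controlled using $\WF^{s-1}(P_\sigma^*u)\cap\Lambda_+=\emptyset$ and Cauchy--Schwarz (and absorbed into the left side); quantities microsupported in $O\setminus\Lambda_+$, finite by the hypothesis $\WF^s(u)\cap(O\setminus\Lambda_+)=\emptyset$; and the regularizer errors. Letting $\epsilon\to0$ gives $Au\in L^2$, i.e.\ $\Lambda_+\cap\WF^s(u)=\emptyset$, and the identical argument at $\Lambda_-$ finishes the proof.

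The step I expect to be the real obstacle is the control of the regularizer commutators as $\epsilon\to0$: unlike in Proposition~\ref{prop:micro-out}, there is no a priori Sobolev control on $u$ at $\Lambda_\pm$ here at all, so one must verify that the terms produced by commuting $\Lambda_\epsilon$ past $P_\sigma$ and past the cutoffs are either uniformly bounded in $\epsilon$ or carry a sign allowing them to be discarded rather than absorbed --- this is the familiar delicacy that makes ``propagate towards'' harder to set up cleanly than ``propagate away.'' It is handled just as in \cite[Section~9]{RBMSpec}, and in the closely related semiclassical settings in \cite{Nonnenmacher-Zworski:Quantum} and \cite{Datchev-Vasy:Gluing-prop}.
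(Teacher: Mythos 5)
Your proposal follows the same positive-commutator strategy as the paper, with the same commutant order $s-1/2$, the same localizer in $\rho_0$, the same use of \eqref{eq:weight-definite} and \eqref{eq:subpr-symbol-at-Lag} to identify the threshold, and the same disposal of the $\phi'$ (your $\chi'$) term via the hypothesis $\WF^s(u)\cap(O\setminus\Lambda_\pm)=\emptyset$; the paper in fact presents Propositions~\ref{prop:micro-out} and~\ref{prop:micro-in} as two sign cases of a single commutator computation, deferring details to \cite[Section~9]{RBMSpec}. Two minor stylistic differences: the paper avoids the sharp G{\aa}rding inequality by choosing $\phi$ so that $\sqrt{-\phi\phi'}$ is smooth, whereas you invoke G{\aa}rding; and the paper quantizes a commutant of the form $c=\phi(\rho_0)\tilde\rho^{-s+1/2}$ directly rather than your factored $A=A_\sigma$ elliptic at $\Lambda_+$ — equivalent.

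However, your closing paragraph misidentifies where the delicacy lies, and this is the one place where your plan as written has a gap. You propose to regularize with a family $\Lambda_\epsilon\in\Psi^{-\infty}$, uniformly bounded in $\Psi^0$, converging to $\Id$, and then worry (correctly, for that choice) that the commutators $[\Lambda_\epsilon,P_\sigma]$ produce terms of the same leading order with no controllable sign. The paper avoids this entirely by taking the regularizer to be the explicit symbol $(1+\epsilon\tilde\rho^{-1})^{-\delta}$, multiplying the commutant. Then the regularizer's contribution to $\sH_p c_\epsilon^2$ is computable at the principal-symbol level — it produces the term $\delta\,\epsilon(\tilde\rho+\epsilon)^{-1}\phi^2$ in \eqref{eq:rad-comm-expand} — and, crucially, this term has the \emph{same} sign as the main term precisely in the regime $s<(\diffordm+\im\sigma)/2$ of Proposition~\ref{prop:micro-in}. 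So, contrary to your expectation, the paper remarks that in this proposition ``regularization is not an issue'': since $s-\diffordm/2+\im\sigma<0$ already, adding $-\delta$ only improves the sign, and $\delta$ may be taken as large as one likes, so there is no lower threshold, no loss, and no iteration in $s$. (It is in Proposition~\ref{prop:micro-out}, where the regularizer term has the unfavorable sign, that one pays: the threshold $m>(\diffordm-\im\sigma)/2$ appears and one must iterate in steps of $\leq 1/2$.) The genuinely new feature of Proposition~\ref{prop:micro-in} relative to~\ref{prop:micro-out} is instead the one you do identify correctly: the $\phi'$ term now has the wrong sign and must be placed in $O\setminus\Lambda_\pm$ and controlled by the a priori hypothesis there. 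To make your proof watertight you should replace the generic $\Psi^{-\infty}$ regularizer by the symbolic one $(1+\epsilon\tilde\rho^{-1})^{-\delta}$, after which the concern you raise dissolves.
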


\begin{proof}[Proof of Propositions~\ref{prop:micro-out}-\ref{prop:micro-in}.]
The proof is a positive commutator estimate. Consider
commutants $C_\ep^*C_\ep$ with $C_\ep\in\Psi^{s-\diffordmpar/2-\delta}(X)$ for $\ep>0$,
uniformly bounded in $\Psi^{s-\diffordmpar/2}(X)$ as $\ep\to 0$; with the
$\ep$-dependence used to regularize the argument. More precisely, let
$$
c=\phi(\rho_0)\tilde\rho^{-s+\diffordmpar/2},\qquad c_\ep=c(1+\ep \tilde\rho^{-1})^{-\delta},
$$
where $\phi\in\CI_c(\RR)$ is identically $1$ near $0$, $\phi'\leq 0$ and
$\phi$ is supported sufficiently close to $0$ so that
\begin{equation}\label{eq:rad-localizer-est}
\rho_0\in\supp d\phi\Rightarrow \pm\tilde\rho\,\sH_p\rho_0>0;
\end{equation}
such $\phi$ exists by \eqref{eq:rho-0-property}. To avoid using the
sharp
G{\aa}rding inequality,
we choose $\phi$ so that $\sqrt{-\phi\phi'}$
is $\CI$.
Note that the sign of $\sH_p \tilde\rho^{-s+\diffordmpar/2}$ depends on the sign of
$-s+\diffordmpar/2$ which explains the difference between $s>\diffordmpar/2$
and $s<\diffordmpar/2$
in Propositions~\ref{prop:micro-out}-\ref{prop:micro-in} when
there are no other contributions to the threshold value of $s$. The contribution of
the principal symbol of $\frac{1}{2\imath}(P_\sigma-P_\sigma^*)$,
however, shifts the critical value $\diffordmpar/2$.

Now let $C\in\Psi^{s-\diffordmpar/2}(X)$
have principal symbol $c$, and have $\WF'(C)\subset \supp \phi\circ\rho_0$, and
let $C_\ep=C S_\ep$, $S_\ep\in\Psi^{-\delta}(X)$ uniformly bounded in
$\Psi^0(X)$ for $\ep>0$, converging
to $\Id$ in $\Psi^{\delta'}(X)$ for $\delta'>0$ as $\ep\to 0$, with principal symbol
$(1+\ep \tilde\rho^{-1})^{-\delta}$. Thus, the principal symbol of $C_\ep$ is $c_\ep$.

First, consider Proposition~\ref{prop:micro-out}.
Then
\begin{equation*}\begin{split}
&\sigma_{2s}(\imath(P^*_\sigma C_\ep^*C_\ep-C_\ep^*C_\ep P_\sigma))
=\sigma_1(\imath(P^*_\sigma-P_\sigma)) c_\ep^2
+2c_\ep\sH_p c_\ep\\
&=\pm 8\left(-\im\sigma\phi
+\left(-s+\frac{\diffordm}{2}\right)\phi\pm\frac{1}{4}(\tilde\rho\sH_p\rho_0)\phi'
+\delta\frac{\ep}{\tilde\rho+\ep}\phi\right)
\phi\tilde\rho^{-2s}(1+\ep \tilde\rho^{-1})^{-\delta},
\end{split}\end{equation*}
so
\begin{equation}\begin{split}\label{eq:rad-comm-expand}
\pm\sigma_{2s}&(\imath(P^*_\sigma C_\ep^*C_\ep-C_\ep^*C_\ep P_\sigma))\\
&\leq
-8\left(s-\frac{\diffordm}{2}+\im\sigma-\delta\right) \tilde\rho^{-2s}(1+\ep \tilde\rho^{-1})^{-\delta}\phi^2\\
&\qquad\qquad+2(\pm\tilde\rho\sH_p\rho_0)
\tilde\rho^{-2s}(1+\ep \tilde\rho^{-1})^{-\delta}\phi'\phi
.
\end{split}\end{equation}
Here the first term on the right hand side
is negative if $s-\diffordmpar/2+\im\sigma-\delta>0$
and this is the same sign as that of $\phi'$ term;
the presence of $\delta$ (needed for the regularization) is the reason for the
appearance of $m$ in the estimate.
Thus,
\begin{equation*}
\pm\imath(P^*_\sigma C_\ep^*C_\ep-C_\ep^*C_\ep P_\sigma)=-S_\ep^*(B^*B+ B_1^*B_1+B_{2,\ep}^* B_{2,\ep})S_\ep+F_\ep,
\end{equation*}
with $B,B_1,B_{2,\ep}\in\Psi^s(X)$, $B_{2,\ep}$ uniformly bounded in $\Psi^s(X)$ as
$\ep\to 0$,
$F_\ep$ uniformly bounded in
$\Psi^{2s-1}(X)$, and $\sigma_s(B)$ an elliptic multiple of $\phi(\rho_0)\tilde\rho^{-s}$.
Computing the pairing, using an extra regularization (insert a
regularizer $\Lambda_r\in\Psi^{-1}(X)$, uniformly bounded in
$\Psi^0(X)$, converging to $\Id$ in $\Psi^\delta(X)$ to justify
integration by parts, and use that $[\Lambda_r,P^*_\sigma]$ is
uniformly bounded in $\Psi^{1}(X)$, converging to $0$ strongly,
cf.\ \cite[Lemma~17.1]{Vasy:Propagation-2} and its use in \cite[Lemma~17.2]{Vasy:Propagation-2}) yields
\begin{equation}\label{eq:reg-of-pairing}
\langle \imath(P^*_\sigma C_\ep^*C_\ep-C_\ep^*C_\ep P_\sigma)u,u\rangle=
\langle \imath C_\ep^*C_\ep u,P_\sigma u\rangle-\langle\imath P_\sigma,C_\ep^*C_\ep u\rangle.
\end{equation}
Using Cauchy-Schwartz on the right hand side, a standard functional analytic argument
(see, for instance, Melrose \cite[Proof of Proposition~7 and Section~9]{RBMSpec})
gives an estimate for $Bu$, showing $u$ is in $H^s$ on the elliptic set of $B$,
provided $u$ is microlocally in $H^{s-\delta}$. A standard
inductive argument, starting with $s-\delta=m$ and improving regularity by $\leq 1/2$
in each step proves Proposition~\ref{prop:micro-out}.

For Proposition~\ref{prop:micro-in}, when applied to $P_\sigma$ in
place of $P_\sigma^*$ (so the assumption is $s<(\diffordm-\im\sigma)/2$), the argument
is similar, but we want to change the sign of the first term on the right hand side of
\eqref{eq:rad-comm-expand}, i.e.\ we want it to be positive. This
is satisfied if
$s-\diffordmpar/2+\im\sigma-\delta<0$, hence (as $\delta>0$)
if $s-\diffordmpar/2+\im\sigma<0$, so regularization is not an issue.
On the other hand, $\phi'$ now has
the wrong sign, so one needs to make an assumption on $\supp d\phi$;
one can arrange that this is in $O\setminus\Lambda$ by making $\phi$
have sufficiently small support, but identically $1$ near $0$.
Since the details are standard,
see \cite[Section~9]{RBMSpec}, we leave these to the reader. When
interchanging $P_\sigma$ and $P_\sigma^*$, we need to take into account the switch of
the sign of the principal symbol of
$\frac{1}{2\imath}(P_\sigma-P_\sigma^*)$, which causes the sign change
in front of $\im\sigma$ in the statement of the proposition.
\end{proof}

\subsection{Global estimates}
For our Fredholm results, we actually need estimates. However, these
can be easily obtained from regularity results as in e.g.\ \cite[Proof
of Theorem~26.1.7]{Hor}
by the closed graph theorem. It should be noted that of course one
really proved versions of the relevant estimates when proving
regularity, but the closed graph theorem provides a particularly
simple way of combining these (though it comes at the cost of using a
theorem which in principle is unnecessary).

So suppose $s\geq
m>(\diffordm-\im\sigma)/2$,
$u\in H^m$ and $(P_\sigma-\imath Q_\sigma)u\in H^{s-1}$.
The above results give that, first,
$\WF^s(u)$ (indeed, $\WF^{s+1}(u)$) is disjoint from the elliptic set
of $P_\sigma-\imath Q_\sigma$. Next $\Lambda_\pm$ is disjoint from
$\WF^s(u)$, hence so is a neighborhood of $\Lambda_\pm$ as the
complement of the wave
front set is open. Thus by propagation of singularities and
\eqref{eq:classical-non-trap},
taking
into account the sign of $q$ along $\Sigma_\pm$,
$\WF^s(u)\cap\Sigma_\pm=\emptyset$.
Now, by the regularity result, the inclusion map
$$
\cZ_s=\{u\in H^m:\ (P_\sigma-\imath Q_\sigma)u\in H^{s-1}\}\to H^m,
$$
in fact maps to $H^s$.

Note that $\cZ_s$ is complete with the norm
$\|u\|_{\cZ_s}^2=\|u\|_{H^m}^2+\|(P_\sigma-\imath
Q_\sigma)u\|^2_{H^{s-1}}$. Indeed, $\{u_j\}_{j=1}^\infty$ Cauchy in
$\cZ_s$ means $u_j\to u$ in $H^m$ and $ (P_\sigma-\imath
Q_\sigma)u_j\to v\in H^{s-1}$. By the first convergence, $(P_\sigma-\imath Q_\sigma)u_j\to
(P_\sigma-\imath Q_\sigma)u$ in $H^{m-2}$, thus, as $s-1\geq m-2$,
$ (P_\sigma-\imath
Q_\sigma)u_j\to v$ in $H^{m-2}$ shows $(P_\sigma-\imath
Q_\sigma)u=v\in H^{s-1}$, and thus, $(P_\sigma-\imath Q_\sigma)u_j\to
(P_\sigma-\imath Q_\sigma)u$ in $H^{s-1}$, so $u_j\to u$ in $\cZ_s$.

The graph of the inclusion map, considered as a subset of $\cZ_s\times
H^s$ is closed, for $(u_j,u_j)\to(u,v)\in\cZ_s\times H^s$ implies in
particular $u_j\to u$ and $u_j\to v$ in $H^m$, so $u=v\in\cZ_s\cap
H^s$. Correspondingly, by the closed graph theorem, the inclusion map
is continuous, i.e.
\begin{equation}\label{eq:unique-est}
\|u\|_{H^s}\leq C(\|(P_\sigma-\imath
Q_\sigma)u\|_{H^{s-\diffordm}}+\|u\|_{H^m}),\qquad u\in\cZ_s.
\end{equation}
This estimate implies that $\Ker(P_\sigma-\imath Q_\sigma)$ in
$H^s$
is finite dimensional since elements of this kernel lie in $\cZ_s$,
and since on the unit ball of this closed subspace
of $H^s$ (for $P_\sigma-\imath Q_\sigma:H^s\to H^{s-2}$ is
continuous),
$\|u\|_{H^s}\leq C\|u\|_{H^m}$, and the inclusion $H^s\to
H^m$ is compact. Further, elements of $\Ker(P_\sigma-\imath Q_\sigma)$
are in $\CI(X)$ by our regularity result, and thus this space is
independent of the choice of $s$.

On the other hand, for the adjoint operator, $P_\sigma^*+\imath Q_\sigma^*$
we have that if $s'<(\diffordm+\im\sigma)/2$
(recall
that replacing $P_\sigma$ by its
adjoint switches the sign of the principal symbol of $\frac{1}{2\imath}(P_\sigma-P_\sigma^*)$),
$u\in H^{-N}$ and $(P_\sigma^*+\imath Q_\sigma^*)u\in H^{s'-1}$ then first
$\WF^{s'}(u)$ (indeed, $\WF^{s'+1}(u)$) is disjoint from the elliptic set
of $P_\sigma^*+\imath Q_\sigma^*$.
Next, by propagation of singularities and \eqref{eq:classical-non-trap}, taking
into account the sign of $q$ along $\Sigma_\pm$, namely the sign of
the imaginary part of the principal symbol switched by taking the adjoints,
$\WF^{s'}(u)\cap(\Sigma_\pm\setminus\Lambda_\pm)=\emptyset$.
Finally, by the result at the radial points
$\Lambda_\pm$ is disjoint from
$\WF^{s'}(u)$.
Thus, the inclusion map
$$
\cW_{s'}=\{u\in H^{-N}:\ (P_\sigma^*+\imath Q_\sigma^*)u\in H^{s'-1}\}\to H^{-N},
$$
in fact maps to $H^{s'}$.
We deduce, as above, by the closed graph theorem, that
\begin{equation}\label{eq:exist-est}
\|u\|_{H^{s'}}\leq C(\|(P_\sigma^*+\imath
Q_\sigma^*)u\|_{H^{s'-\diffordm}}+\|u\|_{H^{-N}}),
\qquad u\in\cW_{s'}.
\end{equation}
As above, this estimate implies that $\Ker(P_\sigma^*+\imath
Q_\sigma^*)$ in $H^{s'}$ is
finite dimensional. Indeed, by our regularity results (elliptic
regularity, propagation of singularities, and then regularity at the
radial set) elements of $\Ker(P_\sigma^*+\imath Q_\sigma^*)$ have wave front set
in $\Lambda_+\cup\Lambda_-$ and lie in $\cap_{s'<(\diffordm+\im\sigma)/2}H^{s'}$.

The dual of $H^s$ for $s>(\diffordm-\im\sigma)/2$, is
$H^{-s}=H^{s'-\diffordm}$, $s'=\diffordm-s$, so
$s'<(\diffordm+\im\sigma)/2$ in this case,
while the dual of $H^{s-\diffordm}$, $s>(\diffordm-\im\sigma)/2$, is $H^{\diffordm-s}=H^{s'}$, with $s'=\diffordm-s<(\diffordm+\im\sigma)/2$ again.
Thus, the spaces (apart from the residual spaces $H^m$ and
$H^{-N}$,
into which the inclusion is
compact) in the left, resp.\ right, side of \eqref{eq:exist-est}, are exactly
the duals of those on the right, resp.\ left, side of \eqref{eq:unique-est}.
Thus, by a standard functional analytic argument,
see e.g.\ \cite[Proof of Theorem~26.1.7]{Hor},
namely dualization
and using the compactness
of the inclusion $H^{s'}\to H^{-N}$ for $s'>-N$, \eqref{eq:exist-est}
gives the $H^s$-solvability, $s=1-s'$ (i.e.\ we demand $u\in H^s$), of
$$
(P_\sigma-\imath Q_\sigma)u=f,\ s>(\diffordm-\im\sigma)/2,
$$
for $f$ in the annihilator (in $H^{s-1}=(H^{s'})^*$ with duality induced
by the $L^2$ inner product) of the finite dimensional subspace
$\Ker(P_\sigma^*+\imath Q_\sigma^*)$ of $H^{s'}$.

Recall from \cite[Proof of Theorem~26.1.7]{Hor}
that this argument has two parts: first for any complementary subspace
$V$ of
$\Ker(P_\sigma^*+\imath Q_\sigma^*)$ in $H^{s'}$ (i.e.\ $V$ is closed,
$V\cap\Ker(P_\sigma^*+\imath Q_\sigma^*)=\{0\}$, and
$V+\Ker(P_\sigma^*+\imath Q_\sigma^*)= H^{s'}$, e.g.\ $V$ is the
$H^{s'}$ orthocomplement of $\Ker(P_\sigma^*+\imath Q_\sigma^*)$),
one can drop $\|u\|_{H^{-N}}$ from
the right hand side of \eqref{eq:exist-est} when $u\in V\cap\cW_{s'}$
at the cost of replacing
$C$ by a larger constant $C'$. Indeed, if no $C'$ existed, one would have
a sequence $u_j\in V\cap\cW_{s'}$ such that $\|u_j\|_{H^{s'}}=1$ and
$\|(P_\sigma^*+\imath Q_\sigma^*)u_j\|_{H^{s'-\diffordm}}\to 0$, so
$(P_\sigma^*+\imath Q_\sigma^*)u_j\to 0$ in $H^{s'-\diffordm}$.
By weak
compactness of the $H^{s'}$ unit ball, there is a weakly convergent
subsequence
$u_{j_\ell}$ converging to some $u\in H^{s'}$, by the closedness (which implies
weak closedness) of $V$,
$u\in V$, so $(P_\sigma^*+\imath
Q_\sigma^*)u_{j_\ell}\to (P_\sigma^*+\imath Q_\sigma^*)u$ weakly in $H^{s'-\difford}$,
and thus $(P_\sigma^*+\imath Q_\sigma^*)u=0$ so
$u\in V\cap\Ker(P_\sigma^*+\imath Q_\sigma^*)=\{0\}$.
On the other hand, by
compactness of the inclusion $H^{s'}\to H^{-N}$, $u_{j_\ell}\to u$
strongly in $H^{-N}$, so $\{u_{j_\ell}\}$ is Cauchy in $H^{-N}$, hence from \eqref{eq:exist-est},
it is Cauchy in $H^{s'}$, so it converges to $u$
strongly in $H^{s'}$ and hence $\|u\|_{H^{s'}}=1$. This
contradicts $u=0$, completing the proof of
\begin{equation}\label{eq:exist-est-mod}
\|u\|_{H^{s'}}\leq C'\|(P_\sigma^*+\imath
Q_\sigma^*)u\|_{H^{s'-\diffordm}},\ u\in V\cap\cW_{s'}.
\end{equation}

Thus, with $s'=1-s$,
and for $f$ in the annihilator (in $H^{s-1}$, via the $L^2$-pairing)
of $\Ker(P_\sigma^*+\imath Q_\sigma^*)\subset H^{s'}$, and for $v\in V\cap\cW_{s'}$,
$$
|\langle f,v\rangle|\leq \|f\|_{H^{s-1}}\|v\|_{H^{s'}}\leq C'
\|f\|_{H^{s-1}} \|(P_\sigma^*+\imath Q_\sigma^*)v\|_{H^{s'-1}}.
$$
As adding
an element of $\Ker(P_\sigma^*+\imath Q_\sigma^*)$ to $v$ does not
change either side, the inequality holds for all $v\in\cW_{s'}\subset H^{s'}$. Thus,
the conjugate-linear map $(P_\sigma^*+\imath Q_\sigma^*)v\mapsto \langle
f,v\rangle$, $v\in \cW_{s'}$, which is well-defined, is continuous from
$\Ran_{\cW_{s'}}(P_\sigma^*+\imath Q_\sigma^*)\subset H^{s'-1}$ to $\Cx$,
and by the Hahn-Banach theorem can be extended to a continuous
conjugate linear functional $\ell$ on $H^{s'-1}=(H^s)^*$, so there exists
$u\in H^s$ such that $\langle u,\phi\rangle=\ell(\phi)$ for $\phi\in
H^{s'-1}$.
In particular, when $\phi=(P_\sigma^*+\imath Q_\sigma^*)\psi$,
$\psi\in\CI(X)\subset\cW_{s'}$,
$$
\langle u,(P_\sigma^*+\imath Q_\sigma^*)\psi\rangle=\ell(\phi)=\langle
f,\psi\rangle,
$$
so $(P_\sigma-\imath Q_\sigma)u=f$ as claimed.

In order to set up Fredholm theory, let $\tilde P$ be any
operator with principal symbol $p-\imath q$; e.g.\ $\tilde P$ is
$P_{\sigma_0}-\imath Q_{\sigma_0}$ for some
$\sigma_0$. Then consider
\begin{equation}\label{eq:XY-def}
\cX^s=\{u\in H^s:\ \tilde Pu\in H^{s-\diffordm}\},\ \cY^s=H^{s-\diffordm},
\end{equation}
with
$$
\|u\|_{\cX^s}^2=\|u\|_{H^s}^2+\|\tilde Pu\|^2_{H^{s-\diffordm}}.
$$
Note that the $\cZ_s$-norm is equivalent to the $\cX^s$-norm, and
$\cZ_s=\cX^s$, by \eqref{eq:unique-est} and the preceding discussion.
Note that $\cX^s$ only depends on the principal symbol of $\tilde P$. Moreover,
$\CI(X)$ is dense in $\cX^s$; this follows by considering $R_\ep\in\Psi^{-\infty}(X)$,
$\ep>0$,
such that $R_\ep\to\Id$ in $\Psi^\delta(X)$ for $\delta>0$, $R_\ep$ uniformly
bounded in $\Psi^0(X)$; thus $R_\ep\to\Id$ strongly (but not in the operator norm
topology) on $H^s$ and $H^{s-\diffordm}$.
Then for $u\in \cX^s$, $R_\ep u\in\CI(X)$ for $\ep>0$, $R_\ep u\to u$
in $H^s$ and $\tilde P R_\ep u=R_\ep\tilde Pu+[\tilde P,R_\ep]u$, so the first
term on the right converges to $\tilde Pu$ in $H^{s-\diffordm}$, while $[\tilde P,R_{\ep}]$
is uniformly bounded in $\Psi^{\diffordm}(X)$, converging to $0$ in $\Psi^{\diffordm+\delta}(X)$ for
$\delta>0$, so converging to $0$ strongly as a map $H^s\to H^{s-\diffordm}$. Thus,
$[\tilde P,R_\ep]u\to 0$ in $H^{s-\diffordm}$, and
we conclude that $R_\ep u\to u$ in $\cX^s$.
(In fact, $\cX^s$ is a first-order coisotropic space, more general
function spaces of this nature are discussed
by Melrose, Vasy and Wunsch in \cite[Appendix~A]{Melrose-Vasy-Wunsch:Corners}.)

With these preliminaries,
$$
P_\sigma-\imath Q_\sigma:\cX^s\to \cY^s
$$
is bounded for each $\sigma$ with $s\geq m>(\diffordm-\im\sigma)/2$,
and is an analytic family of bounded operators in this half-plane of
$\sigma$'s. Further, it is Fredholm for each $\sigma$: the kernel in
$\cX^s$ is finite dimensional, and it surjects onto the annihilator in $H^{s-1}$ of
the (finite dimensional) kernel of
$P_\sigma^*+\imath Q_\sigma^*$ in $H^{1-s}$, which thus has finite
codimension, and is closed, since for $f$ in this space there exists
$u\in H^s$ with $(P_\sigma-\imath Q_\sigma)u=f$, and thus $u\in \cX^s$.
Restating this as a theorem:

\begin{thm}\label{thm:classical-absorb}
Let $P_\sigma$, $Q_\sigma$ be as above,
and $\cX^s$, $\cY^s$ as in \eqref{eq:XY-def}.
Then
$$
P_\sigma-\imath Q_\sigma:\cX^s\to\cY^s
$$
is an analytic family of Fredholm operators on
\begin{equation}\label{eq:Cx-s-def}
\Cx_s=\{\sigma\in\Cx:\ \im\sigma>\diffordm-2s\}.
\end{equation}
\end{thm}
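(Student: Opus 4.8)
The plan is to read the Fredholm property off the two one-sided estimates already established — \eqref{eq:unique-est} for $P_\sigma-\imath Q_\sigma$ and \eqref{eq:exist-est}--\eqref{eq:exist-est-mod} for its formal adjoint — combined with the standard functional-analytic machinery of \cite[Proof of Theorem~26.1.7]{Hor}. Essentially all of the microlocal work (elliptic regularity, propagation of singularities with the sign conditions on $q$, and the radial-point estimates) is already done, so what remains is bookkeeping plus the analyticity statement.

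First I would check boundedness and analyticity of the family. Since $P_\sigma-\imath Q_\sigma$ and $\tilde P$ have the same principal symbol $p-\imath q$, their difference lies in $\Psi^{\diffordm}(X)$ and hence maps $H^s\to H^{s-\diffordm}$; as $\tilde Pu\in H^{s-\diffordm}$ for $u\in\cX^s$ by the definition \eqref{eq:XY-def}, the operator $P_\sigma-\imath Q_\sigma\colon\cX^s\to\cY^s$ is bounded. Holomorphy of $\sigma\mapsto P_\sigma-\imath Q_\sigma$ as an $\cL(\cX^s,\cY^s)$-valued map follows from the polynomial dependence of the coefficients of $P_\sigma$ on $\sigma$ (cf.\ \eqref{eq:ah-final-conj-form}) and from the choice of $Q_\sigma$ holomorphic in $\sigma$.

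Next, fix $\sigma\in\Cx_s$, i.e.\ $s>(\diffordm-\im\sigma)/2$, and pick $m$ with $(\diffordm-\im\sigma)/2<m<s$ (possible precisely because $\sigma\in\Cx_s$). For the \emph{kernel}: estimate \eqref{eq:unique-est}, valid exactly in this range, gives $\|u\|_{H^s}\le C\|u\|_{H^m}$ for $u\in\Ker(P_\sigma-\imath Q_\sigma)\subset\cX^s$; since this kernel is closed in $H^s$ and the inclusion $H^s\hookrightarrow H^m$ is compact, its unit ball is compact, so the kernel is finite dimensional (and by elliptic regularity, propagation of singularities, and Proposition~\ref{prop:micro-out} it consists of $\CI$ functions, independent of $s$). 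For the \emph{range}: the formal adjoint $P_\sigma^*+\imath Q_\sigma^*$ has symbol $p+\imath q$ with the sign of $\sigma_1(\frac{1}{2\imath}(P_\sigma-P_\sigma^*))$ reversed, so its radial-point threshold is $s'<(\diffordm+\im\sigma)/2$, which for $s'=\diffordm-s$ is again equivalent to $s>(\diffordm-\im\sigma)/2$; thus \eqref{eq:exist-est} applies, $\Ker(P_\sigma^*+\imath Q_\sigma^*)\subset H^{s'}$ is finite dimensional with wave front set in $\Lambda_+\cup\Lambda_-$, and the dualization argument of \cite[Proof of Theorem~26.1.7]{Hor} (using compactness of $H^{s'}\hookrightarrow H^{-N}$, the improved estimate \eqref{eq:exist-est-mod}, and Hahn--Banach) produces, for every $f$ in the $L^2$-annihilator in $H^{s-\diffordm}$ of $\Ker(P_\sigma^*+\imath Q_\sigma^*)$, a solution $u\in H^s$ of $(P_\sigma-\imath Q_\sigma)u=f$; such $u$ automatically lies in $\cX^s$, since $\tilde Pu$ differs from $f$ by an element of $H^{s-\diffordm}$. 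Conversely, the identity $\langle(P_\sigma-\imath Q_\sigma)u,v\rangle=\langle u,(P_\sigma^*+\imath Q_\sigma^*)v\rangle$ shows the range is contained in that annihilator, so the range equals it: closed, of finite codimension. Together with the first step, $P_\sigma-\imath Q_\sigma\colon\cX^s\to\cY^s$ is then an analytic family of Fredholm operators on $\Cx_s$.

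I do not expect a genuine obstacle. The delicate inputs — the radial-point estimates of Propositions~\ref{prop:micro-out}--\ref{prop:micro-in} and the global bicharacteristic picture \eqref{eq:classical-non-trap} that, via propagation of singularities and the sign conditions on $q$, carries regularity off $\Lambda_\pm$ through $\Sigma_\pm$ into $\elliptic(Q_\sigma)$ — are already in place. The one thing to track is that the single condition $\im\sigma>\diffordm-2s$ must simultaneously allow an intermediate order $m$ in \eqref{eq:unique-est} and push the dual order $s'=\diffordm-s$ below the adjoint threshold $(\diffordm+\im\sigma)/2$; both are immediate arithmetic. The closed graph theorem, the density of $\CI(X)$ in $\cX^s$, and the Hahn--Banach step are all routine.
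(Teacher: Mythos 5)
Your proposal is correct and follows essentially the same route as the paper: boundedness and analyticity from the definition of $\cX^s$ and the polynomial/holomorphic dependence on $\sigma$, finite-dimensional kernel from \eqref{eq:unique-est} combined with the compact inclusion $H^s\hookrightarrow H^m$ for some $m\in\bigl((\diffordm-\im\sigma)/2,\,s\bigr)$, and closed range of finite codimension from the adjoint estimates \eqref{eq:exist-est}--\eqref{eq:exist-est-mod} via the H\"ormander dualization and Hahn--Banach argument, with the arithmetic matching $s>(\diffordm-\im\sigma)/2$ to the adjoint threshold for $s'=\diffordm-s$. This is precisely the paper's own assembly of the preceding global-estimate material into the Fredholm statement.
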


Thus, analytic Fredholm theory applies, giving meromorphy of the inverse
provided the inverse exists for a particular value of $\sigma$.

\begin{rem}\label{rem:dual-Fredholm}
Note that the Fredholm property means that $P_\sigma^*+\imath Q_\sigma^*$ is
also Fredholm on the dual spaces; this can also be seen directly from the estimates.
The analogue of this remark also applies to the semiclassical discussion below.
\end{rem}

\subsection{Semiclassical estimates}
There are semiclassical estimates completely analogous to those in the
classical setting; we again phrase these as wave front set statements.
Let $\Hh^s$ denote the semiclassical Sobolev space of order $s$, i.e.\
as a function space this is the space of functions $(u_h)_{h\in I}$,
$I\subset (0,1]_h$ with values
in the standard Sobolev space $H^s$, with $A_h u_h$ bounded in $L^2$
for an elliptic, semiclassically elliptic, operator
$A_h\in\Psih^s(X)$.
(Note that $u_h$ need not be defined for all $h\in(0,1]$; we suppress
$I$ from the notation.)
Let
$$\WFh^{s,r}(u)\subset\pa
(\overline{T}^*X\times[0,1)_h)=S^*X\times[0,1)_h\cup T^*X\times\{0\}_h
$$
denote the semiclassical wave front set of a polynomially
bounded family of distributions, i.e.\ $u=(u_h)_{h\in I}$,
$I\subset(0,1]$, satisfying $u_h$ is uniformly
bounded in $h^{-N}\Hh^{-N}$ for some $N$.
This is defined as follows: we say that
$\alpha\notin\WFh^{s,r}(u)$ if there exists
$A\in\Psih^0(X)$ elliptic at $\alpha$ such that $Au\in
h^r\Hh^s$. Note that, in view of the description of the symbols in
Section~\ref{sec:notation},
ellipticity at $\alpha$ means the ellipticity of
$\sigma_\semi(A_h)$ if $\alpha\in T^*X\times\{0\}$, that of
$\sigma_0(A_h)$ if $\alpha\in S^*X\times(0,1)$, and that of either
(and thus both, in view of the compatibility of these symbols)
of these when $\alpha\in S^*X\times\{0\}$.
The
semiclassical wave front set captures global estimates: if $u$ is
polynomially bounded and $\WFh^{s,r}(u)=\emptyset$, then $u\in
h^r\Hh^s$. 

Elliptic
regularity states that
$$
P_{h,z}-\imath Q_{h,z}\ \text{elliptic at}\ \alpha,\
\alpha\notin\WFh^{s-2,0}((P_{h,z}-\imath Q_{h,z})u)\Rightarrow
\alpha\notin\WFh^{s,0}(u).
$$
Thus, $(P_{h,z}-\imath Q_{h,z})\in \Hh^{s-2}$ and
$p_{\semi,z}-\imath q_{\semi,z}$ is elliptic at $\alpha$ then $\alpha\notin\WFh^s(u)$.

We also have real principal type propagation:
$$
\WFh^{s,-1}(u)\setminus(\WFh^{s-1,0}((P_{h,z}-\imath Q_{h,z})u)\cup\supp q_{\semi,z})
$$
is a union of maximally extended bicharacteristics of $\sH_p$ in the
characteristic set $\Sigma_{\semi,z}=\{p_{\semi,z}=0\}$ of $P_{h,z}$. Put it differently,
$$
\alpha\notin\WF^{s,-1}(u)\cup \WF^{s-1,0}((P_{h,z}-\imath Q_{h,z})u)\cup\supp
q_{\semi,z}\Rightarrow \tilde\gamma(\alpha)\cap\WF^{s,-1}(u)=\emptyset,
$$
where $\tilde\gamma(\alpha)$ is the component of the bicharacteristic $\gamma(\alpha)$
of $p_{\semi,z}$ in the complement of $\WF^{s-1,0}((P_{h,z}-\imath Q_{h,z})u)\cup\supp
q_{\semi,z}$. If $(P_{h,z}-\imath Q_{h,z})u\in H^{s-1}$, then $\WF^{s-1,0}((P_{h,z}-\imath
Q_{h,z})u)=\emptyset$ can be dropped from all statements above; if
$q_{\semi,z}=0$ one can thus replace $\tilde\gamma$ by $\gamma$.

In general, the result does not hold for non-zero $q_{\semi,z}$. However, it
holds in one direction (backward/forward) of propagation along $\sH_{p_{\semi,z}}$
if $q_{\semi_z}$ has the correct sign. Thus, with
$\tilde\gamma_\pm(\alpha)$ a forward ($+$) or backward ($-$) bicharacteristic from
$\alpha$ defined on an interval,
if $\pm q_{\semi,z}\geq 0$ on a neighborhood of
$\tilde\gamma_\pm(\alpha)$ then
\begin{equation*}\begin{split}
\alpha\notin\WFh^{s,-1}(u)&\Mand\tilde\gamma_\pm(\alpha)\cap\WFh^{s-1,0}((P_{\semi,z}-\imath
Q_{\semi,z})u)=\emptyset\\
&\Rightarrow \tilde\gamma_\pm(\alpha)\cap\WFh^{s,-1}(u)=\emptyset,
\end{split}\end{equation*}
i.e.\ one can propagate regularity forward if $q_{\semi,z}\geq 0$, backward if
$q_{\semi,z}\leq 0$; 
see \cite{Nonnenmacher-Zworski:Quantum}
and \cite{Datchev-Vasy:Gluing-prop}. Again, for $P_{\semi,z}^*+\imath
Q_{\semi,z}^*$ the directions are reversed,
i.e.\ one can propagate regularity forward if $q_{\semi,z}\leq 0$, backward if
$q_{\semi,z}\geq 0$.

A semiclassical version of Melrose's regularity result was proved by
Zworski and the author in the asymptotically Euclidean setting, \cite{Vasy-Zworski:Semiclassical}.
We need a more general result, which is an easy adaptation:

\begin{prop}\label{prop:micro-out-semi}
Suppose $s\geq m>(\diffordm-\im\sigma)/2$, and
$\WFh^{m,-N}(u)\cap L_\pm=\emptyset$ for some $N$. Then
$$
L_\pm\cap\WF^{s-1,0}(P_{\semi,z} u)=\emptyset\Rightarrow L_\pm\cap\WF^{s,-1}(u)=\emptyset.
$$
\end{prop}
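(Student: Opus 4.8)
The plan is to prove this by a semiclassical positive commutator estimate, in direct parallel with the proof of Proposition~\ref{prop:micro-out} and with the asymptotically Euclidean argument of Vasy and Zworski \cite{Vasy-Zworski:Semiclassical}. It suffices to treat $L_+$, the case of $L_-$ being identical after reversing the signs of $\xi$ and of the direction of propagation; since $Q_{h,z}$, hence $q_{\semi,z}$, vanishes near $L_\pm$, the complex absorption plays no role and may be ignored. In the projective coordinates \eqref{eq:oT-coords} near $L_+$ in $\overline{T}^*X$, I would take the commutant with semiclassical principal symbol
$$
c_\ep=\phi(\rho_0)\,\rhot^{-s+1/2}(1+\ep\rhot^{-1})^{-\delta},
$$
where $\rho_0$ is the quadratic defining function of $L_+$ from \eqref{eq:rho-0-property}, $\phi\in\CI_c(\RR)$ is identically $1$ near $0$ with $\phi'\leq 0$ and $\sqrt{-\phi\phi'}$ smooth, and $\supp d\phi$ is small enough that \eqref{eq:rad-localizer-est} holds; quantize to $C_\ep=CS_\ep\in\Psih^{s-1/2}(X)$ as in the classical proof, with $S_\ep$ the usual $\ep$-regularizer, and use in addition a semiclassical regularizer to justify the integrations by parts.

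The core step is the commutator computation, in the semiclassical normalization $\imath h^{-1}(P_{\semi,z}^*C_\ep^*C_\ep-C_\ep^*C_\ep P_{\semi,z})$. By \eqref{eq:Ham-vf-p-h-oT}, near $S^*X$ the rescaled semiclassical Hamilton vector field is $W_\semi=W+\nu W^\sharp$ with $W=\nu\sH_p$ the classical one and $W^\sharp$ smooth and tangent to $S^*X$, so near $L_+$ the semiclassical principal symbol of the commutator reduces, to leading order, to the classical expression \eqref{eq:rad-comm-expand}: a main term $-8(s-\frac12+\im\sigma-\delta)\,\rhot^{-2s}(1+\ep\rhot^{-1})^{-\delta}\phi^2$, whose sign is governed by the threshold $s>(\diffordm-\im\sigma)/2$ exactly as in the classical case (the subprincipal contribution entering via \eqref{eq:subpr-symbol-at-Lag}), together with the $\phi'\phi$ term of the favorable sign from \eqref{eq:rad-localizer-est}. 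The remaining pieces --- the $\nu W^\sharp$ correction on $c_\ep$, the genuinely semiclassical part of $\im p_{\semi,z}$, which by \eqref{eq:im-p-h-symbol-dS} carries an extra factor of $\nu$ near fiber infinity, and the subprincipal symbol of $P_{\semi,z}$ --- are all one semiclassical order lower near $L_+$ and go into an error term $F_\ep$. This yields an identity
$$
\pm\imath h^{-1}(P_{\semi,z}^*C_\ep^*C_\ep-C_\ep^*C_\ep P_{\semi,z})=-S_\ep^*(B^*B+B_1^*B_1+B_{2,\ep}^*B_{2,\ep})S_\ep+F_\ep,
$$
with $B,B_1,B_{2,\ep}\in\Psih^s(X)$ and $\sigma_{\semi,s}(B)$ an elliptic multiple of $\phi(\rho_0)\rhot^{-s}$.

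Pairing this with $u$ and applying Cauchy--Schwarz to $\langle C_\ep^*C_\ep u,P_{\semi,z}u\rangle$ (finite by the hypotheses $\WFh^{m,-N}(u)\cap L_+=\emptyset$ and $L_+\cap\WF^{s-1,0}(P_{\semi,z}u)=\emptyset$, with the $F_\ep$-term handled by microlocal $\Hh^{s-\delta}$-regularity near $L_+$), the standard functional-analytic argument of \cite[Proof of Proposition~7 and Section~9]{RBMSpec} bounds $\|Bu\|_{L^2}$ and places $u$ in $h^{-1}\Hh^s$ microlocally on $\elliptic_\semi(B)\ni L_+$. The loss of $h^{-1}$ (rather than $h^0$) is exactly the one forced by the $\imath h^{-1}$ in the commutator against the $\cO(1)$ size of the $P_{\semi,z}u$-pairing: Cauchy--Schwarz gives $\|u\|_{\Hh^s}^2\lesssim h^{-2}\|P_{\semi,z}u\|_{\Hh^{s-1}}^2$ plus lower-order terms near $L_+$. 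A standard induction --- improving the Sobolev order by at most $\frac12$ per step, starting from the hypothesis at level $m$ --- then propagates this up to $h^{-1}\Hh^s$, i.e.\ $L_+\cap\WF^{s,-1}(u)=\emptyset$.

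I expect the main obstacle to be bookkeeping rather than anything conceptual: keeping precise track of the powers of $h$, both in the commutator identity and through the $\ep$- and $h$-regularizations, so that the conclusion lands exactly at $\WF^{s,-1}$ --- the characteristic ``semiclassically outgoing with a loss of $h^{-1}$'' feature --- and not at $h^0$ or $h^{-2}$; and verifying that the genuinely semiclassical and subprincipal corrections near $L_\pm$ are indeed one semiclassical order lower than the main term, so that they cannot spoil the favorable sign coming from the threshold $m>(\diffordm-\im\sigma)/2$. This is precisely where the structure \eqref{eq:Ham-vf-p-h-oT} of $W_\semi$, the vanishing of $\im p_{\semi,z}$ at fiber infinity through \eqref{eq:im-p-h-symbol-dS}, and the classical radial-point analysis of Proposition~\ref{prop:micro-out} are brought to bear.
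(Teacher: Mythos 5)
Your commutant uses the \emph{classical} localizer $\phi(\rho_0)$; the paper's proof hinges on replacing it with $\phi(\rho_0+\rhot^2)$, and this change is not bookkeeping but the whole point. The paper opens its proof with exactly this: ``We just need to localize in $\rhot$ in addition to $\rho_0$; such a localization in the classical setting is implied by working on $S^*X$ or with homogeneous symbols.'' In the classical argument of Proposition~\ref{prop:micro-out}, everything is homogeneous, so $\phi(\rho_0)$ localizes near $L_\pm$ \emph{within} $S^*X$ automatically. Semiclassically you are on $\overline{T}^*X$, and $L_\pm$ is the set $\{\rho_0=0\}\cap\{\rhot=0\}$; the set $\{\rho_0<\delta\}$ alone is a tube that extends in the $\nu=\rhot$-direction to all finite $\xi$, well away from fiber infinity. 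Your key claim that ``the remaining pieces --- the $\nu W^\sharp$ correction on $c_\ep$, the genuinely semiclassical part of $\im p_{\semi,z}$ \ldots --- are all one semiclassical order lower near $L_+$'' is precisely what fails to be justified on $\supp c_\ep$ without the extra localization: $\nu W^\sharp$ is smaller than $W$ only where $\nu$ is small, and $\phi(\rho_0)$ does nothing to keep $\nu$ small. Likewise the sign of the $\phi'$-term relies on the dynamical inequality \eqref{eq:rho-0-property}, which is an estimate for the classical $W$; turning it into a favorable inequality for $W_\semi$ on $\supp d\phi$ requires knowing one is close to $S^*X$.

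Concretely, the paper chooses $\phi$ supported near $0$ so that both $\rhot^2+\rho_0\in\supp d\phi\Rightarrow\pm\rhot(\sH_p\rho_0+2\rhot\sH_p\rhot)>0$ and $\rhot^2+\rho_0\in\supp\phi\Rightarrow\pm\rhot\sH_p\rhot>0$ hold; these follow from \eqref{eq:weight-definite} and \eqref{eq:rho-0-property} since $\pm\rhot(\sH_p\rho_0+2\rhot\sH_p\rhot)\geq 8(\rho_0+\rhot^2)-\cO((\rho_0+\rhot^2)^{3/2})$. The second condition is needed to control the sign of the $W_\semi$-derivative of the weight $\rhot^{-s+1/2}$, which your localizer does not pin down away from $\rhot=0$; the first ensures that the $\phi'$-contribution has a definite sign across $\supp d\phi$. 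With the localizer $c=\phi(\rho_0+\rhot^2)\rhot^{-s+\diffordmpar/2}$, $c_\ep=c(1+\ep\rhot^{-1})^{-\delta}$, the remainder of the argument does indeed go exactly as in Propositions~\ref{prop:micro-out}--\ref{prop:micro-in}, as you say; but without it, the argument as you have written it has a genuine gap at the sign analysis. Fix: replace $\phi(\rho_0)$ by $\phi(\rho_0+\rhot^2)$ and add the support conditions above.
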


Again, at the cost of changing regularity, one can
propagate estimate {\em towards} $L_\pm$.

\begin{prop}\label{prop:micro-in-semi}
For $s<(\diffordm+\im\sigma)/2$, and $O$ a neighborhood of
$L_\pm$,
$$
\WFh^{s,-1}(u)\cap (O\setminus L_\pm)=\emptyset,\ \WFh^{s-1,0}(P_{\semi,z}^*
u)\cap L_\pm=\emptyset\Rightarrow \WFh^{s,-1}(u)\cap L_\pm=\emptyset.
$$
\end{prop}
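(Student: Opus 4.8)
The plan is to prove Proposition~\ref{prop:micro-in-semi} (and, by the same argument with the opposite sign choice, Proposition~\ref{prop:micro-out-semi}) by a semiclassical positive commutator argument running in exact parallel to the proof of Propositions~\ref{prop:micro-out}--\ref{prop:micro-in}, with the ordinary calculus $\Psi^\bullet(X)$ replaced throughout by $\Psih^\bullet(X)$ and with $\sH_p$ replaced near $L_\pm$ by its fiber-radial rescaling $W_\semi=\nu\sH_{p_{\semi,z}}$ of \eqref{eq:Ham-vf-p-h-oT}; this is the adaptation of \cite{Vasy-Zworski:Semiclassical} to the present geometry alluded to above. The geometric input needed near $L_\pm$ is already recorded: $W_\semi$ is smooth on $\overline{T}^*X$ and tangent to $S^*X=\pa\overline{T}^*X$, $W_\semi-W=\nu W^\sharp$ with $W^\sharp$ smooth and tangent to $S^*X$, and by \eqref{eq:weight-definite}, \eqref{eq:rho-0-property}, \eqref{eq:dS-Ham-2} one has $(\sgn\xi)W_\semi(\rhot^2+\rho_0)\ge 8(\rhot^2+\rho_0)-\cO((\rhot^2+\rho_0)^{3/2})$; all of this holds uniformly in the regime $\im\sigma\ge -C$, i.e.\ $\im z\ge -Ch$, relevant here.

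First I would fix the commutant exactly as in the classical proof, but --- as there, in the passage from Proposition~\ref{prop:micro-out} to Proposition~\ref{prop:micro-in} --- demanding the \emph{opposite} inequality on $\supp d\phi$: take $c=\phi(\rho_0)\tilde\rho^{-s+\diffordmpar/2}$ with $\phi\in\CI_c(\RR)$ identically $1$ near $0$, $\phi'\le 0$, $\sqrt{-\phi\phi'}$ smooth, and with $\phi$ of sufficiently small support that $\supp d\phi\subset O\setminus L_\pm$; regularize as $c_\ep=c(1+\ep\tilde\rho^{-1})^{-\delta}$, let $C_\ep=\Oph(c_\ep)$ be uniformly bounded in $\Psih^{s-\diffordmpar/2}(X)$, and insert a further regularizer $\Lambda_r\in\Psih^{-1}(X)$ to justify the integration by parts in the analogue of \eqref{eq:reg-of-pairing}. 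Writing $\imath(P^*_{\semi,z}C_\ep^*C_\ep-C_\ep^*C_\ep P_{\semi,z})=\imath(P^*_{\semi,z}-P_{\semi,z})C_\ep^*C_\ep+\imath[P_{\semi,z},C_\ep^*C_\ep]$, the first term has semiclassical symbol $-2(\im p_{\semi,z})c_\ep^2$ while the second produces a term built from $W_\semi(c_\ep^2)$ with an extra factor of $h$; since $\im p_{\semi,z}=-2\im z\bigl(2(1+a_2)\xi+(1+a_3)\re z\bigr)$ by \eqref{eq:im-p-h-symbol-dS}, in the regime $\im z\sim h$ the first term also carries a factor of $h$, so after dividing by the common factor $h$ and rehomogenizing near $L_\pm$ by $\tilde\rho$ (recalling that the size of the subprincipal contribution relative to $W_\semi\tilde\rho$ is what matters, by \eqref{eq:weight-definite}) the two contributions combine into the exact semiclassical analogue of \eqref{eq:rad-comm-expand}: a term proportional to $-8\bigl(s-\diffordmpar/2-\im\sigma-\delta\bigr)\phi^2\tilde\rho^{-2s}$ --- with $\im\sigma$ entering with this sign because, working with $P^*_{\semi,z}$, the subprincipal contribution has the sign opposite to the $P_{\semi,z}$ case --- together with a term proportional to $2(\pm\tilde\rho W_\semi\rho_0)\phi'\phi\,\tilde\rho^{-2s}$ and errors lower order in $\tilde\rho$.

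The assumption $s<(\diffordm+\im\sigma)/2$ forces $s-\diffordmpar/2-\im\sigma-\delta<0$ for $\delta>0$ small, so regularization costs nothing and the first term above is \emph{positive}, i.e.\ of the sign opposite to that of the $\phi'\phi$ term. One then absorbs the $\phi'\phi$ term into the a priori hypothesis $\WFh^{s,-1}(u)\cap(O\setminus L_\pm)=\emptyset$, using $\supp d\phi\subset O\setminus L_\pm$, exactly as in \cite[Section~9]{RBMSpec}; pairing the resulting operator identity with $u$, applying Cauchy--Schwarz together with the hypothesis $\WFh^{s-1,0}(P^*_{\semi,z}u)\cap L_\pm=\emptyset$, and letting $r,\ep\to0$, one concludes $\WFh^{s,-1}(u)\cap L_\pm=\emptyset$. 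The single factor of $h$ pulled out of the commutator is exactly what produces the $h^{-1}$ weight in the conclusion against the weight-$h^0$ hypothesis on $P^*_{\semi,z}u$, matching Proposition~\ref{prop:micro-out-semi}; for the latter one instead has $s\ge m>(\diffordm-\im\sigma)/2$, so the first term is negative and of the same sign as the $\phi'\phi$ term (no constraint on $\supp d\phi$), and a standard induction gaining at most $\tfrac12$ in each step, starting from $s-\delta=m$, closes the argument. The main obstacle I anticipate is purely bookkeeping: one has to carry the classical order in $\tilde\rho=|\xi|^{-1}$ and the semiclassical order in $h$ simultaneously, working on $\overline{T}^*X\times[0,1)_h$ with symbols smooth up to all boundary faces as in Section~\ref{sec:notation} and \cite{Vasy-Zworski:Semiclassical}, so that the errors are genuinely controlled and the power of $h$ in the conclusion is sharp; everything else is a routine adaptation of the classical argument.
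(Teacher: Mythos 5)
The key structural adaptation that the paper singles out is precisely the one your proposal skips. In the classical proof the commutant is
$c=\phi(\rho_0)\tilde\rho^{-s+\diffordmpar/2}$,
and localizing only in $\rho_0$ suffices because one works on $S^*X$, which is already at fiber infinity $\tilde\rho=0$. In the semiclassical setting one works on the full fiber-compactified cotangent bundle $\overline{T}^*X$, where $L_\pm$ is cut out by $\tilde\rho=0$ \emph{and} $\rho_0=0$, while $\{\rho_0=0\}$ alone is (essentially) all of the closure of $N^*S$, reaching deep into $T^*X$. So the commutant $\phi(\rho_0)\tilde\rho^{-s+\diffordmpar/2}$ you propose is \emph{not} microlocalized near $L_\pm$: $\rho_0^{-1}(\supp d\phi)$ contains points with $\tilde\rho$ bounded away from $0$, and these are not contained in $O\setminus L_\pm$ for any neighborhood $O$ of $L_\pm$, so the a priori hypothesis $\WFh^{s,-1}(u)\cap(O\setminus L_\pm)=\emptyset$ cannot be used to absorb the $\phi'\phi$ term as you claim. (Also $\tilde\rho^{-s+\diffordmpar/2}$ without a cutoff in $\tilde\rho$ is not a well-behaved semiclassical symbol as $\tilde\rho\to\infty$, i.e.\ near the zero section.)

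The fix, which is what the paper's proof does, is to replace $\phi(\rho_0)$ by $\phi(\rho_0+\tilde\rho^2)$, where $\rho_0+\tilde\rho^2$ is a quadratic defining function of $L_\pm$ in $\overline{T}^*X$. This gives genuine localization near $L_\pm$ and makes $\supp d\phi$ lie in $O\setminus L_\pm$ for $\phi$ with small enough support. It also forces the two sign conditions one actually needs: on $\supp d\phi$ one needs
$\pm\tilde\rho(\sH_p\rho_0+2\tilde\rho\sH_p\tilde\rho)>0$ (the combined term replaces your $\tilde\rho\sH_p\rho_0$), and on all of $\supp\phi$ one needs $\pm\tilde\rho\sH_p\tilde\rho>0$, which controls the sign of the weight term $\sH_p(\tilde\rho^{-s+\diffordmpar/2})$; neither of these is automatic once you leave $S^*X$, but both follow from \eqref{eq:weight-definite}, \eqref{eq:rho-0-property} on a small enough support. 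You even quote the estimate $(\sgn\xi)W_\semi(\rhot^2+\rho_0)\ge 8(\rhot^2+\rho_0)-\cO((\rhot^2+\rho_0)^{3/2})$, which is exactly the statement that $\rho_0+\tilde\rho^2$ is the correct localizing quantity, but you do not put it into the commutant. Your framing of this as ``purely bookkeeping'' is where the argument goes astray; this is a genuine modification of the commutant, and it is the whole content of the semiclassical version of the proof. The rest of your outline (signs of the subprincipal term under $P^*_{\semi,z}$, why $s<(\diffordm+\im\sigma)/2$ gives the favorable sign, regularization, the extraction of the $h^{-1}$ loss) agrees with the paper.
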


\begin{proof}
We just need to localize in $\tilde\rho$ in addition to $\rho_0$;
such a localization in the
classical setting is implied by working on $S^*X$ or with homogeneous
symbols.
We achieve this by modifying the localizer $\phi$ in the commutant
constructed in the proof of Propositions~\ref{prop:micro-out}-\ref{prop:micro-in}.
As already remarked, the proof is
much like at radial points in semiclassical scattering on asymptotically
Euclidean spaces, studied by Vasy and Zworski \cite{Vasy-Zworski:Semiclassical}, but
we need to be more careful about localization in $\rho_0$ and $\tilde\rho$ as we
are assuming less about the structure.

First, note that $L_\pm$ is defined by $\tilde\rho=0$, $\rho_0=0$, so $\tilde\rho^2+
\rho_0$ is a quadratic defining function of $L_\pm$.
Thus, let $\phi\in\CI_c(\RR)$ be identically $1$ near $0$,
$\phi'\leq 0$ and
$\phi$ supported sufficiently close to $0$ so that
\begin{equation*}\begin{split}
&\tilde\rho^2+\rho_0\in\supp d\phi\Rightarrow
\pm \tilde\rho(\sH_p\rho_0+2\tilde\rho\sH_p\tilde\rho)>0
\end{split}\end{equation*}
and
\begin{equation*}\begin{split}
&\tilde\rho^2+\rho_0\in\supp \phi\Rightarrow
\pm \tilde\rho\sH_p\tilde\rho>0.
\end{split}\end{equation*}
Such $\phi$ exists by
\eqref{eq:weight-definite} and \eqref{eq:rho-0-property} as
$$
\pm\tilde\rho(\sH_p\rho_0+2\tilde\rho\sH_p\tilde\rho)\geq 8\rho_0+
8\tilde\rho^2-\cO((\tilde\rho^2+
\rho_0)^{3/2}).
$$
Then let $c$ be given by
$$
c=\phi(\rho_0+\tilde\rho^2)\tilde\rho^{-s+\diffordmpar/2},\qquad
c_\ep=c(1+\ep \tilde\rho^{-1})^{-\delta}.
$$
The rest of the proof proceeds exactly as for
Propositions~\ref{prop:micro-out}-\ref{prop:micro-in}.
\end{proof}

Suppose now that $p_{\semi,z}$ is semiclassically non-trapping, as
discussed at the end of Section~\ref{sec:spaces}.
Suppose
again that $s\geq
m>(\diffordm-\im\sigma)/2$,
$h^N u_h$ is bounded in $\Hh^m$ and $(P_{h,z}-\imath Q_{h,z})u_h\in \Hh^{s-1}$.
The above results give that, first,
$\WFh^{s,-1}(u)$ (indeed, $\WFh^{s+1,0}(u)$) is disjoint from the elliptic set
of $P_{h,z}-\imath Q_{h,z}$. Next we see that $L_\pm$ is disjoint from
$\WFh^{s,-1}(u)$, hence so is a neighborhood of $L_\pm$.
Thus by propagation of singularities and the semiclassically
non-trapping property, taking
into account the sign of $q$ along $\Sigma_{\semi,\pm}$,
$\WFh^{s,-1}(u)\cap\Sigma_{\semi,\pm}=\emptyset$.
In summary, $\WFh^{s,-1}(u)=\emptyset$, i.e.\ $hu_h$ is bounded in
$\Hh^s$, i.e.
\begin{equation}\label{eq:glob-semi-reg}
h^N u_h\ \text{bounded in}\ \Hh^m,\ (P_{h,z}-\imath Q_{h,z})u_h\in
\Hh^{s-1}\Rightarrow
hu_h\in \Hh^s.
\end{equation}

Now suppose that for a decreasing sequence $h_j\to 0$,
$w_h\in\Ker(P_{h,z}-\imath Q_{h,z})$ and $\|w_h\|_{\Hh^s}=1$. Then for
any $N$, $u_h=h^{-N}w_h$ satisfies the above hypotheses, and we deduce
that $hu_h$ is uniformly bounded in $\Hh^s$, i.e.\ $h^{-N+1}w_h$ is
uniformly bounded in $\Hh^s$. But for $N>1$ this contradicts that
$\|w_h\|_{\Hh^s}=1$, so such a sequence $h_j$ does not
exist. Therefore $\Ker(P_{h,z}-\imath Q_{h,z})=\{0\}$ for sufficiently
small $h$.

Using semiclassical propagation of singularities in the reverse direction, much as
we did in the previous section, we deduce that $\Ker(P_{h,z}^*+\imath
Q_{h,z}^*)=\{0\}$ for $h$ sufficiently small. Since $P_{h,z}-\imath
Q_{h,z}:\cX^s\to\cY^s$ is Fredholm, we deduce immediately that there
exists $h_0$ such that it is
invertible for $h<h_0$.

In order to obtain uniform estimates for $(P_{h,z}-\imath
Q_{h,z})^{-1}$ as $h\to 0$, it is convenient to `renormalize' the
problem to make the function spaces (and their norms) independent of
$h$
so that one can use
the uniform boundedness principle. (Again, this could have been
avoided if we had just stated the estimates uniformly in $u$ as well,
much like the closed graph theorem could have been avoided in the
previous section.) So for $r\in\RR$ let $\Lambda^r_h\in\Psi^r_h$ be
elliptic and invertible, and let
$$
P^s_{h,z}-\imath Q^s_{h,z}=\Lambda^{s-1}_h (P_{h,z}-\imath Q_{h,z})\Lambda^s_h.
$$
Then, with $\tilde P=P^s_{h_0,z_0}\in\Psi^1(X)$, for instance,
independent of $h$,
$$
\cX=\{u\in L^2:\ \tilde Pu\in L^2\},\ \cY=L^2,
$$
$P^s_{h,z}-\imath Q^s_{h,z}:\cX\to\cY$ is invertible for $h<h_0$ by
the above observations. Let $j:\cX\to \cZ=L^2$ be the inclusion map. Then
$$
j\circ h(P^s_{h,z}-\imath Q^s_{h,z})^{-1}:\cY\to \cZ
$$
is continuous for each $h<h_0$.

We claim that for each (non-zero) $f\in\cY$, $\{\|h(P^s_{h,z}-\imath
Q^s_{h,z})^{-1}f\|_{L^2}:\ h<h_0\}$ is bounded. Indeed, let $v_h=h(P^s_{h,z}-\imath
Q^s_{h,z})^{-1}f$, so we need to show that $v_h$ is bounded. Suppose first that $h v_h$ is not bounded,
so consider a sequence $h_j$ with $h_j \|v_{h_j}\|_{L^2}\geq
1$.
Then let $u_h=h^{-2}v_h/\|v_h\|_{L^2}$,  $h\in\{h_j:\ j\in\Nat\}$,
so $h^2u_h$ is bounded in
$L^2$, so $u_h$ is in particular polynomially bounded in $L^2$.
Also, $(P^s_{h,z}-\imath Q^s_{h,z})u_h=h^{-1}f/\|v_h\|_{L^2}$ is
bounded in $L^2$ as $\|v_h\|\geq h^{-1}$.
Thus, by \eqref{eq:glob-semi-reg},
$hu_h$ is bounded in $L^2$, i.e.\ $h^{-1}v_h/\|v_h\|_{L^2}$ is
bounded, which is a contradiction, showing that $hv_h$ is bounded.
Thus, introducing a new $u_h$, namely $u_h=h^{-1}v_h$, $u_h$ is polynomially bounded, and
$(P^s_{h,z}-\imath Q^s_{h,z})u_h=f$ is bounded, so, by
\eqref{eq:glob-semi-reg},
$hu_h=v_h$ is
bounded as claimed.

Thus, by the uniform boundedness principle, $j\circ h(P^s_{h,z}-\imath
Q^s_{h,z})^{-1}$ is equicontinuous. Undoing the transformation, we
deduce that
$$
\|(P_{h,z}-\imath Q_{h,z})^{-1}f\|_{\Hh^s}\leq
Ch^{-1}\|f\|_{\Hh^{s-1}},
$$
which is exactly the high energy estimate we were after.

Our arguments were under the assumption of semiclassical
non-trapping. As discussed in Subsections~\ref{subsec:global-semi} and
\ref{subsec:complex-absorb-dS},
this always holds in sectors $\delta|\re\sigma|<\im\sigma<\delta_0|\re
\sigma|$ (with $Q_\sigma$ supported in $\mu<0$!)
since $P_{h,z}-\imath Q_{h,z}$ is actually semiclassically
elliptic then. In particular this gives the meromorphy of
$P_\sigma-\imath Q_\sigma$ by giving invertibility of large $\sigma$
in such a sector. Rephrasing in the large parameter notation, using
$\sigma$ instead of $h$,

\begin{thm}\label{thm:classical-absorb-sector}
Let $P_\sigma$, $Q_\sigma$, $\Cx_s$ be as above,
and $\cX^s$, $\cY^s$ as in \eqref{eq:XY-def}.
Then, for $\sigma\in\Cx_s$,
$$
P_\sigma-\imath Q_\sigma:\cX^s\to\cY^s
$$
has a meromorphic inverse
$$
R(\sigma):\cY^s\to\cX^s.
$$
Moreover, there is $\delta_0>0$ such that for all $\delta\in(0,\delta_0)$ there is $\sigma_0>0$
such that $R(\sigma)$ is invertible in
$$
\{\sigma:\ \delta|\re\sigma|<\im\sigma<\delta_0|\re \sigma|,\
|\re\sigma|>\sigma_0\},
$$
and
{\em non-trapping estimates} hold:
$$
\|R(\sigma)f\|_{H^s_{|\sigma|^{-1}}}
\leq C'|\sigma|^{-\diffordm}\|f\|_{H^{s-1}_{|\sigma|^{-1}}}.
$$
\end{thm}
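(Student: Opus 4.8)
The plan is to repackage the microlocal machinery of Section~\ref{sec:microlocal} into the two asserted conclusions: meromorphy of $R(\sigma)$ on all of $\Cx_s$, and invertibility together with the non-trapping bound in the sector $\cS=\{\delta|\re\sigma|<\im\sigma<\delta_0|\re\sigma|\}$ for $|\re\sigma|$ large. For the meromorphy I would argue as follows. By Theorem~\ref{thm:classical-absorb} the map $\sigma\mapsto P_\sigma-\imath Q_\sigma:\cX^s\to\cY^s$ is an analytic family of Fredholm operators on the connected half-plane $\Cx_s$, so by the analytic Fredholm theorem the inverse $R(\sigma)$ is meromorphic on $\Cx_s$ as soon as $P_\sigma-\imath Q_\sigma$ is invertible at a single point of $\Cx_s$. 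Such a point will be delivered by the sector analysis, since $\cS$ meets $\Cx_s$ (indeed $\im\sigma\to+\infty$ along $\cS$).

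For invertibility in $\cS$ I would rescale semiclassically, setting $h=|\sigma|^{-1}$ and $z=h\sigma$, so $z$ ranges over a compact arc of the unit circle with $\im z$ bounded away from $0$ and, for $\delta_0$ small, below $\delta_0$, and $P_{h,z}-\imath Q_{h,z}=h^2(P_\sigma-\imath Q_\sigma)$. As recalled in Subsections~\ref{subsec:global-semi}--\ref{subsec:complex-absorb-dS}, in this regime $P_{h,z}-\imath Q_{h,z}$ is semiclassically elliptic at every finite point of $T^*\Xext$ --- near $\mu=0$ by \eqref{eq:re-p-h-symbol-dS}, in $\mu>0$ by the choice of $\phi$ following \eqref{eq:semicl-pr-hyp}, near $\pa X_{\mu_0}$ by the construction of $Q_\sigma$ --- while at fiber infinity its structure coincides with the classical one (characteristic set $\Sigma$, radial points $L_\pm$, and $q_{\semi,z}$ of the arranged sign along $\Sigma_{\semi,\pm}$). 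In particular the problem is semiclassically non-trapping, so the argument of the Semiclassical estimates subsection applies verbatim: semiclassical elliptic regularity at finite points, Proposition~\ref{prop:micro-out-semi} at $L_\pm$ (whose threshold hypothesis is automatic, since $\im\sigma=h^{-1}\im z$ is large and positive), and semiclassical propagation along $\Sigma_{\semi,\pm}$ into $\elliptic(Q_\sigma)$ together yield the global regularity statement \eqref{eq:glob-semi-reg}, and the reversed version (Proposition~\ref{prop:micro-in-semi} together with propagation in the opposite direction) handles $P_{h,z}^*+\imath Q_{h,z}^*$. These force $\Ker(P_{h,z}-\imath Q_{h,z})=\{0\}$ and $\Ker(P_{h,z}^*+\imath Q_{h,z}^*)=\{0\}$ for $h$ small; combined with the Fredholm property this gives invertibility of $P_\sigma-\imath Q_\sigma$ for $\sigma\in\cS$ with $|\re\sigma|>\sigma_0$ for a suitable $\sigma_0=\sigma_0(\delta)$, which simultaneously furnishes the point of $\Cx_s$ needed for the meromorphy step.

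For the non-trapping estimate I would run the uniform boundedness argument of the Semiclassical estimates subsection in the sector. Conjugating by elliptic invertible $\Lambda^r_h\in\Psih^r(X)$ renormalizes the problem to $P^s_{h,z}-\imath Q^s_{h,z}:\cX\to\cY$ with $\cX,\cY$ fixed, $h$-independent Hilbert spaces; the inverses are bounded operators for each admissible $(h,z)$, \eqref{eq:glob-semi-reg} shows the family $\{h(P^s_{h,z}-\imath Q^s_{h,z})^{-1}f\}$ is bounded for each fixed $f$, and the uniform boundedness principle then gives equicontinuity of $h(P^s_{h,z}-\imath Q^s_{h,z})^{-1}$. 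Undoing the conjugation yields
\[
\|R(\sigma)f\|_{H^s_{|\sigma|^{-1}}}\le C'|\sigma|^{-1}\|f\|_{H^{s-1}_{|\sigma|^{-1}}}.
\]

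I do not expect a genuinely new difficulty: everything is a repackaging of Section~\ref{sec:microlocal}. The point I would be most careful about is verifying that throughout $\cS$ the rescaled operator is uniformly semiclassically elliptic at \emph{all} finite points of $T^*\Xext$ (so that semiclassical non-trapping, hence the hypotheses of the Semiclassical estimates subsection, genuinely holds) and that the classical structure at fiber infinity --- the radial points $L_\pm$ and the signs of $q$ along $\Sigma_\pm$ --- is untouched by the rescaling, so that Propositions~\ref{prop:micro-out-semi}--\ref{prop:micro-in-semi} and the propagation estimates apply with constants uniform as $h\to0$. This is exactly where the two cutoffs enter: the lower bound $\delta$ keeps $\im z$ away from $0$ (needed since for real $z$ the operator fails to be semiclassically elliptic at the zero section), while the upper bound $\delta_0$ keeps the simple model symbol $q_{\semi,z}$ semiclassically elliptic near $\mu=\mu_0$.
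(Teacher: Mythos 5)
Your proposal is correct and follows essentially the same route as the paper: the paper likewise obtains the theorem by observing that in the sector $\delta|\re\sigma|<\im\sigma<\delta_0|\re\sigma|$ the rescaled operator $P_{h,z}-\imath Q_{h,z}$ is semiclassically \emph{elliptic} in $T^*\Xext$ (so non-trapping trivially), making the kernel-triviality and uniform-boundedness arguments of the semiclassical subsection apply and yielding invertibility with the stated bound, and then deduces meromorphy on $\Cx_s$ from Theorem~\ref{thm:classical-absorb} together with the analytic Fredholm theorem. The only point worth keeping in mind (which you flag) is that the $\delta_0$ upper bound is tied to the particular explicit $q_{h,z}$ built in Subsection~\ref{subsec:complex-absorb-dS}; the paper notes a sharper choice would enlarge the sector.
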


If the metric $g_0$ is non-trapping then $p_{\semi,z}-\imath
q_{\semi,z}$ and its complex conjugate are semiclassically
non-trapping by Subsection~\ref{subsec:global-semi}, so
the high energy estimates
are then applicable in half-planes $\im\sigma<-C$, i.e.\ half-planes
$\im z\geq -Ch$. The same holds for trapping $g_0$ provided that we
add a complex absorbing operator near the trapping, as discussed in
Subsection~\ref{subsec:complex-absorb-dS}.

Translated into the classical setting this gives

\begin{thm}\label{thm:classical-absorb-strip}
Let $P_\sigma$, $Q_\sigma$, $\Cx_s$, $\delta_0>0$ be as above, in particular
semiclassically non-trapping,
and $\cX^s$, $\cY^s$ as in \eqref{eq:XY-def}. Let $C>0$. Then there exists $\sigma_0$
such that
$$
R(\sigma):\cY^s\to\cX^s,
$$
is holomorphic
in $\{\sigma:\ -C<\im\sigma<\delta_0|\re\sigma|,\ |\re\sigma|>\sigma_0\}$, assumed to be a subset
of $\Cx_s$,
and {\em non-trapping estimates}
$$
\|R(\sigma)f\|_{H^s_{|\sigma|^{-1}}}
\leq C'|\sigma|^{-\diffordm}\|f\|_{H^{s-\diffordm}_{|\sigma|^{-1}}}
$$
hold.
For $s=1$ this states that for $|\re\sigma|>\sigma_0$, $\im\sigma>-C$,
$$
\|R(\sigma)f\|_{L^2}^2+|\sigma|^{-2}\|dR(\sigma)\|_{L^2}^2
\leq C''|\sigma|^{-2}\|f\|_{L^2}^2.
$$
\end{thm}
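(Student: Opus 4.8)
The plan is to obtain this theorem purely by translating the semiclassical analysis of Section~\ref{sec:microlocal} into large-parameter language, with no new microlocal work. The meromorphic family $R(\sigma):\cY^s\to\cX^s$ on $\Cx_s$ is already provided by Theorem~\ref{thm:classical-absorb-sector}, so the only remaining tasks are: (a) to rule out poles of $R(\sigma)$ in the region $\Omega=\{-C<\im\sigma<\delta_0|\re\sigma|,\ |\re\sigma|>\sigma_0\}$ for a suitable $\sigma_0$; and (b) to prove the quantitative high-energy bound there, from which the $s=1$ statement is immediate.

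First I would rewrite $\Omega$ in semiclassical variables: set $h=|\sigma|^{-1}$ and $z=h\sigma$, so $|z|=1$, and the constraints become $\im z>-Ch$ and $\im z<\delta_0|\re z|$. On the unit circle $\im z<\delta_0|\re z|$ forces $\im z\leq\delta_0/\sqrt{1+\delta_0^2}<\delta_0$, while $\im z>-Ch>-\delta_0$ once $h<h_0$ with $h_0<\delta_0/C$; hence for $h<h_0$ and $|\re\sigma|>\sigma_0=h_0^{-1}$ the admissible $z$ all lie in $\{|z|=1,\ |\im z|<\delta_0\}$, and the closure $K$ of the union of these $z$-sets over $h\in(0,h_0)$ is a compact neighbourhood of the two real points $z=\pm1$ inside that set. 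By the standing semiclassical non-trapping hypothesis, together with Subsections~\ref{subsec:global-semi} and \ref{subsec:complex-absorb-dS}, $p_{\semi,z}-\imath q_{\semi,z}$ and its complex conjugate are semiclassically non-trapping for every $z$ with $|\im z|<\delta_0$, hence uniformly for $z\in K$.

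Next I would invoke the semiclassical estimates verbatim. For $z\in K$, the global semiclassical regularity statement \eqref{eq:glob-semi-reg} holds (its ingredients --- semiclassical elliptic regularity, propagation of singularities with the complex absorption, and the radial-point Propositions~\ref{prop:micro-out-semi}--\ref{prop:micro-in-semi} --- being valid for such $z$), and the argument following \eqref{eq:glob-semi-reg} then shows $\Ker(P_{h,z}-\imath Q_{h,z})=\{0\}$ and, running propagation in the reverse direction, $\Ker(P_{h,z}^*+\imath Q_{h,z}^*)=\{0\}$ for $h$ small; with the index-zero Fredholm property from Theorem~\ref{thm:classical-absorb} and Remark~\ref{rem:dual-Fredholm} this gives invertibility of $P_{h,z}-\imath Q_{h,z}:\cX^s\to\cY^s$ for $h<h_0$, and the uniform boundedness argument there yields $\|(P_{h,z}-\imath Q_{h,z})^{-1}f\|_{\Hh^s}\leq Ch^{-1}\|f\|_{\Hh^{s-1}}$. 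Since $P_{h,z}=h^2P_\sigma$ and $Q_{h,z}=h^2Q_\sigma$, we have $P_{h,z}-\imath Q_{h,z}=h^2(P_\sigma-\imath Q_\sigma)$ and $R(\sigma)=h^2(P_{h,z}-\imath Q_{h,z})^{-1}$; thus invertibility for $h<h_0$ means $R(\sigma)$ has no poles in $\Omega$, hence is holomorphic there, and, using $\Hh^s=H^s_{|\sigma|^{-1}}$ with $h=|\sigma|^{-1}$,
$$
\|R(\sigma)f\|_{H^s_{|\sigma|^{-1}}}=h^2\|(P_{h,z}-\imath Q_{h,z})^{-1}f\|_{\Hh^s}\leq Ch\|f\|_{\Hh^{s-1}}=C|\sigma|^{-\diffordm}\|f\|_{H^{s-\diffordm}_{|\sigma|^{-1}}},
$$
using $\diffordm=1$. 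The $s=1$ case follows by substituting $\|u\|_{H^1_{|\sigma|^{-1}}}^2=\|u\|_{L^2}^2+|\sigma|^{-2}\|du\|_{L^2}^2$ and $\|f\|_{H^0_{|\sigma|^{-1}}}=\|f\|_{L^2}$.

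The main obstacle is the \emph{uniformity in $z$} of all of the above as $z$ ranges over the $h$-dependent admissible set. The individual microlocal estimates (elliptic regularity, propagation, radial points) must hold with constants locally uniform in $z$, which they do because the rescaled Hamilton flow and the relevant symbols depend smoothly on $z$ and the source/sink-plus-absorption structure is an open condition; and the ``extract a subsequence $h_j\to0$'' contradiction arguments proving invertibility and the renormalized estimate must be run with $z_j$ also allowed to vary in $K$, where compactness of $K$ together with its containment in the \emph{open} semiclassically non-trapping region $\{|\im z|<\delta_0\}$ is exactly what makes any limiting value of $z$ harmless. Everything else is bookkeeping: matching the Sobolev and semiclassical-Sobolev conventions, tracking the $h^2=|\sigma|^{-2}$ rescaling between $P_{h,z}$ and $P_\sigma$, and noting that $\Omega\subset\Cx_s$ --- the standing assumption of the theorem, equivalent to $s\geq(\diffordm+C)/2$ --- is precisely what allows a choice of $m$ with $s\geq m>(\diffordm-\im\sigma)/2$ throughout $\Omega$.
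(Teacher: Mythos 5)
Your proposal is correct and follows the same path the paper takes, namely reducing Theorem~\ref{thm:classical-absorb-strip} to the semiclassical non-trapping estimate $\|(P_{h,z}-\imath Q_{h,z})^{-1}f\|_{\Hh^s}\leq Ch^{-1}\|f\|_{\Hh^{s-1}}$ proved in Section~\ref{sec:microlocal} and then rescaling by $h^2=|\sigma|^{-2}$; this is precisely what the paper compresses into the phrase ``translated into the classical setting this gives [the theorem].'' You go slightly beyond the paper's exposition by making explicit the compactness argument in $z$ needed to get a single constant $C'$ as $\sigma$ ranges over the whole region $\Omega$ (the paper's sequence and uniform-boundedness arguments are written for fixed $z$ and leave this uniformity implicit), which is a worthwhile clarification rather than a deviation.
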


While we stated just the global results here, one also has microlocal estimates
for the solution. In particular we have the following, stated in the semiclassical
language, as immediate from the estimates used to derive from the Fredholm
property:

\begin{thm}\label{thm:semicl-outg}
Let $P_\sigma$, $Q_\sigma$, $\Cx_s$ be as above, in particular
semiclassically non-trapping,
and $\cX^s$, $\cY^s$ as in \eqref{eq:XY-def}. 

For $\re z>0$ and $s'>s$, the resolvent $R_{h,z}$ is
{\em semiclassically outgoing with a loss of $h^{-1}$} in
the sense that if $\alpha\in \overline{T}^*X\cap\Sigma_{\semi,\pm}$, and if for the
backward ($-$), resp.\ forward ($+$), bicharacteristic $\gamma_\mp$, from $\alpha$,
$\WFh^{s'-\diffordm,0}(f)\cap\overline{\gamma_\mp}=\emptyset$ then $\alpha\notin\WFh^{s',-1}(R_{h,z}f)$.

In fact, for any $s'\in\RR$, the resolvent $R_{h,z}$ extends to $f\in H^{s'}_h(X)$,
with non-trapping bounds, provided that $\WFh^{s,0}(f)\cap(L_+\cup L_-)=\emptyset$.
The semiclassically outgoing with a loss of $h^{-1}$ result holds for such $f$ and
$s'$ as well.
\end{thm}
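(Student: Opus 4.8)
The plan is to read off both statements from the microlocal estimates already assembled --- elliptic regularity, real principal type propagation with the sign constraint from $q_{\semi,z}$, the radial point estimate Proposition~\ref{prop:micro-out-semi}, and the semiclassically non-trapping dynamics of Subsection~\ref{subsec:complex-absorb-dS} --- following the bicharacteristic flow exactly as in the derivation of the non-trapping resolvent bound.

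For the first statement I would set $u=R_{h,z}f\in\cX^s\subset\Hh^s$, so that $(P_{h,z}-\imath Q_{h,z})u=f$ and $\WFh^{s,-1}(u)=\emptyset$, and fix $\alpha\in\overline T^*X\cap\Sigma_{\semi,+}$ (the case $\Sigma_{\semi,-}$ being symmetric under interchanging forward and backward, $L_+$ and $L_-$, and the two signs of $q_{\semi,z}$). The backward bicharacteristic $\gamma_-(\alpha)$ tends to $L_+$ or reaches $\elliptic(Q_{h,z})$, and $\WFh^{s'-\diffordm,0}(f)$ is empty on its closure by hypothesis. At $L_+$, since $s'>s>(\diffordm-\im\sigma)/2$ and $u\in\Hh^s$ there a priori, Proposition~\ref{prop:micro-out-semi} applied with $m=s$ (using that $Q_{h,z}$ vanishes microlocally near $L_\pm$, so $(P_{h,z}-\imath Q_{h,z})u$ and $P_{h,z}u$ agree there) gives $L_+\cap\WFh^{s',-1}(u)=\emptyset$; at $\elliptic(Q_{h,z})$, elliptic regularity gives even $u\in\Hh^{s'+1}$. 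Since $q_{\semi,z}\geq0$ on $\Sigma_{\semi,+}$, I would then propagate $\Hh^{s',-1}$-regularity forward from $L_+$ (or from $\elliptic(Q_{h,z})$) along $\gamma_-(\alpha)$ to $\alpha$, using $\WFh^{s'-\diffordm,0}(f)\cap\gamma_-(\alpha)=\emptyset$, obtaining $\alpha\notin\WFh^{s',-1}(u)$; the quantitative forms of these estimates give the accompanying bound.

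For the second statement, given $f\in\Hh^{s'}$ with $\WFh^{s,0}(f)\cap(L_+\cup L_-)=\emptyset$, I would choose $B\in\Psih^0(X)$ equal to the identity microlocally near $L_+\cup L_-$ with $\WFh'(B)$ contained in the set where $\WFh^{s,0}(f)=\emptyset$. Then $Bf\in\Hh^s\subset\cY^s$, so $u_1:=R_{h,z}(Bf)\in\cX^s$ is already defined, with a non-trapping bound for $\|u_1\|_{\Hh^s}$ in terms of $\|Bf\|_{\Hh^{s-1}}$, while $g:=(1-B)f\in\Hh^{s'}$ is microsupported away from $L_+\cup L_-$. I would approximate $g$ by $g_j\in\CI(X)$, microsupported in a fixed set away from $L_+\cup L_-$ and with $g_j\to g$ in $\Hh^{s'}$ (a regularizing sequence followed by a cutoff), set $u_2^{(j)}:=R_{h,z}g_j\in\cX^s$, and claim $\|u_2^{(j)}\|_{\Hh^{s'+1}}\leq Ch^{-1}\|g_j\|_{\Hh^{s'}}$ uniformly in $j$. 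Here the key point is that $g_j$ vanishes microlocally near $L_\pm$ while $u_2^{(j)}\in\Hh^s$ there a priori, so iterating Proposition~\ref{prop:micro-out-semi} puts $u_2^{(j)}$ in $\Hh^N$ microlocally near $L_+\cup L_-$ for every $N$; elliptic regularity off $\Sigma_{\semi}$ together with real principal type propagation (forward on $\Sigma_{\semi,+}$ where $q_{\semi,z}\geq0$, backward on $\Sigma_{\semi,-}$ where $q_{\semi,z}\leq0$, started from $L_\pm$ and $\elliptic(Q_{h,z})$) then spreads $\Hh^{s'+1,-1}$-control over all of $\Sigma_{\semi}$, using $\WFh^{s',0}(g_j)=\emptyset$, so $\WFh^{s'+1,-1}(u_2^{(j)})=\emptyset$; the corresponding quantitative estimate, with the lower-order term absorbed by semiclassical non-trapping as in the argument around \eqref{eq:glob-semi-reg}, gives the uniform bound. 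Hence $u_2^{(j)}$ converges in $\Hh^{s'+1}$ to some $u_2$ with $(P_{h,z}-\imath Q_{h,z})u_2=g$ and $\|u_2\|_{\Hh^{s'+1}}\leq Ch^{-1}\|g\|_{\Hh^{s'}}$, and I would define $R_{h,z}f:=u_1+u_2$; this is independent of $B$ and of the $g_j$ since two choices differ by an element of $\Ker(P_{h,z}-\imath Q_{h,z})=\{0\}$ for $h$ small, it agrees with the old resolvent on $\Hh^{s-1}$, and the outgoing property follows from the first statement applied to $u_1$ and from the estimates just established for $u_2$.

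The genuinely new work is in the second statement, and the step I expect to be the main obstacle is the uniform bound $\|u_2^{(j)}\|_{\Hh^{s'+1}}\leq Ch^{-1}\|g_j\|_{\Hh^{s'}}$: one must use the microsupport separation of $g_j$ from $L_\pm$ to decouple the radial-point threshold from the global order $s'$ via Proposition~\ref{prop:micro-out-semi}, and --- rather than merely invoking \eqref{eq:glob-semi-reg} --- re-examine the positive-commutator and propagation estimates to confirm that the lower-order remainders are genuinely absorbed using the semiclassical non-trapping structure, uniformly in $j$ and $h$.
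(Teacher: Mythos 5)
The first part of your argument---applying Proposition~\ref{prop:micro-out-semi} at the radial set (with entry level $s$ and target $s'$, legitimate since $u=R_{h,z}f\in\cX^s\subset\Hh^s$ and $Q_{h,z}$ vanishes microlocally near $L_\pm$ so $(P_{h,z}-\imath Q_{h,z})u=f$ there), elliptic regularity on $\elliptic(Q_\sigma)$, and forward/backward propagation with the sign condition on $q_{\semi,z}$ to reach $\alpha$---is exactly the reasoning the paper describes as ``immediate by what has been discussed,'' and it is correct (modulo the usual microlocalization of the radial estimate to the component of $L_\pm$ actually in $\overline{\gamma_\mp}$).

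For the second statement, your route is genuinely different from the paper's and, as written, incomplete at exactly the step you flag. The paper invokes \emph{microlocal solvability away from the radial points}: one directly constructs a microlocal parametrix for $P_{h,z}-\imath Q_{h,z}$ on $\Hh^{s'}$-based spaces, so that the part of $f$ microsupported away from $L_\pm$ is solved for explicitly, with $\Hh^{s'+1}$ bounds and an $O(h^\infty)$ smoothing error; the already-established resolvent on $\cX^s\to\cY^s$ is then applied only to that smoothing error plus the part of $f$ near $L_\pm$, both of which lie in $\Hh^{s-1}$ with good bounds. Your replacement---split $f=Bf+g$, apply the known $R_{h,z}$ to $Bf$, approximate $g$ by smooth $g_j$ and claim $\|R_{h,z}g_j\|_{\Hh^{s'+1}}\le Ch^{-1}\|g_j\|_{\Hh^{s'}}$ uniformly---is not a routine matter of ``re-examining the estimates.'' When $s'<s-1$, the only a priori control available on $u_2^{(j)}=R_{h,z}g_j$ is the Fredholm bound $\|u_2^{(j)}\|_{\Hh^s}\le Ch^{-1}\|g_j\|_{\Hh^{s-1}}$, and $\|g_j\|_{\Hh^{s-1}}$ is not controlled by $\|g_j\|_{\Hh^{s'}}$ (indeed it is unbounded as $j\to\infty$ when $g\in\Hh^{s'}\setminus\Hh^{s-1}$, which is precisely the case of interest). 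The low-order remainder in the positive-commutator/radial-point estimate (the $h^N\|u_2^{(j)}\|_{\Hh^{-N}}$ term, and the entry-level term that the iteration bootstraps from) therefore inherits this $\|g_j\|_{\Hh^{s-1}}$-dependence; because $j$ is independent of $h$, the smallness in $h$ does not rescue uniformity in $j$, so the absorption argument of \eqref{eq:glob-semi-reg} does not close in the norm you want. To make your strategy rigorous you would have to prove the positive-commutator estimates directly in a variable-order framework (order $s$ near $L_\pm$, order $s'$ elsewhere) with remainders controlled only by the $\Hh^{s'}$ norm of the forcing away from $L_\pm$, or else construct the microlocal parametrix---which is what the paper's one-line argument is actually doing.
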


\begin{proof}
The only part that is not immediate by what has been discussed is the last
claim. This follows immediately, however, by microlocal solvability in arbitrary
ordered Sobolev spaces away from the radial points (i.e.\ solvability modulo
$\CI$, with semiclassical estimates), combined with our preceding
results to deal
with this smooth remainder plus the contribution near $L_+\cup L_-$, which
are assumed to be in $H^s_h(X)$.
\end{proof}

This result is needed for gluing constructions as in \cite{Datchev-Vasy:Gluing-prop},
namely polynomially bounded trapping with appropriate microlocal geometry
can be glued to our resolvent. Furthermore,
it gives non-trapping estimates microlocally away from the trapped set provided
the overall (trapped) resolvent is polynomially bounded as shown by Datchev
and Vasy \cite{Datchev-Vasy:Trapped}.

\section{Results in the conformally compact setting}\label{sec:conf-comp-results}
We now state our results in the original conformally compact setting.
Without the non-trapping estimate,
these are a special case of a result of Mazzeo and Melrose
\cite{Mazzeo-Melrose:Meromorphic}, with improvements by
Guillarmou \cite{Guillarmou:Meromorphic}, with `special' meaning that
evenness is
assumed. If the space is asymptotic to actual hyperbolic space,
the non-trapping estimate is a slightly stronger version of
the estimate of \cite{Melrose-SaBarreto-Vasy:Semiclassical}, where it
is shown by a parametrix construction; here conformal infinity can
have arbitrary geometry.
The point is thus that first, we do not
need the machinery of the zero calculus here, second, we do have
non-trapping high energy estimates in general (and without a
parametrix construction), and third,  we add the semiclassically
outgoing property which is useful for resolvent gluing, including
for proving non-trapping bounds microlocally away from trapping, provided
the latter is mild, as shown by Datchev and Vasy \cite{Datchev-Vasy:Gluing-prop,
Datchev-Vasy:Trapped}.

\begin{thm}\label{thm:conf-compact-high}
Suppose that $(X_0,g_0)$ is an $\dimnpar$-dimensional manifold with boundary with
an even conformally compact metric and boundary defining function $x$. Let
$X_{0,\even}$ denote the even version of $X_0$, i.e.\ with the boundary defining
function replaced by its square with respect to a decomposition in which
$g_0$ is even. Then
the inverse of
$$
\Delta_{g_0}-\left(\frac{\dimnm}{2}\right)^2-\sigma^2,
$$
written as $\cR(\sigma):L^2\to L^2$,
has a meromorphic continuation from
$\im\sigma\gg0$ to $\Cx$,
$$
\cR(\sigma):\dCI(X_0)\to\dist(X_0),
$$
with poles with finite rank residues. If in addition
$(X_0,g_0)$ is non-trapping, then, with $\phi$ as in
Subsection~\ref{subsec:Laplacian-extension}, and for suitable $\delta_0>0$,
non-trapping estimates hold in every region $-C<\im\sigma<\delta_0|\re\sigma|$, $|\re\sigma|\gg 0$: for
$s>\frac{1}{2}+C$,
\begin{equation}\label{eq:non-trap-conf-comp}
\|x^{-(\dimnm)/2}e^{\imath\sigma\phi} \cR(\sigma)f\|_{H^s_{|\sigma|^{-1}}(X_{0,\even})}
\leq \tilde C|\sigma|^{-1}\|x^{-(\dimnppppar)/2}e^{\imath\sigma\phi}f\|_{H^{s-1}_{|\sigma|^{-1}}(X_{0,\even})}.
\end{equation}
If $f$ is supported in $X_0^\circ$,
the $s-1$ norm on $f$ can be replaced by the $s-2$ norm.

Furthermore, for $\re z>0$, $\im z=\cO(h)$,
the resolvent $\cR(h^{-1}z)$ is {\em semiclassically outgoing with a loss
of $h^{-1}$} in the sense that
if $f$ has compact support in $X_0^\circ$, $\alpha\in T^*X$ is in
the semiclassical characteristic set and if $\WFh^{s-1,0}(f)$ is disjoint from
the backward bicharacteristic from $\alpha$, then
$\alpha\notin\WFh^{s,-1}(\cR(h^{-1}z)f)$.
\end{thm}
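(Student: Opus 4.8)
The plan is to transport the results for $P_\sigma-\imath Q_\sigma$ on the compact manifold $X$ (Theorems~\ref{thm:classical-absorb}, \ref{thm:classical-absorb-strip} and \ref{thm:semicl-outg}) back to the original space by undoing the conjugation and weight removal of Subsection~\ref{subsec:Laplacian-extension}. Unwinding the definition of $P_\sigma$ in \eqref{eq:ah-prefinal-conj-form}--\eqref{eq:ah-final-conj-form} (see also \eqref{eq:P-semicl-form}), using that near $\pa X_0$ one has $e^{\imath\sigma\phi}=x^{\imath\sigma}(1+x^2)^{-\imath\sigma/4}$ while $Q_\sigma$ is supported in $\mu<0$ and hence does not affect the operator over $X_{0,\even}$, the identity
$$
\Delta_{g_0}-\Big(\frac{\dimnm}{2}\Big)^2-\sigma^2=x^{\dimnppppar/2}e^{-\imath\sigma\phi}\,P_\sigma\,e^{\imath\sigma\phi}x^{-(\dimnm)/2}
$$
holds on $X_0^\circ\cong X_{0,\even}^\circ$ (with $\phi$ extended to the interior as in Subsection~\ref{subsec:Laplacian-extension}). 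Accordingly I would \emph{define}
$$
\cR(\sigma)f=x^{(\dimnm)/2}e^{-\imath\sigma\phi}\big(R(\sigma)\tilde f\big)\big|_{X_{0,\even}},\qquad\tilde f=x^{-\dimnppppar/2}e^{\imath\sigma\phi}f\ \text{(extended by $0$ to $X$)},
$$
where $R(\sigma)=(P_\sigma-\imath Q_\sigma)^{-1}$ is the family from Theorem~\ref{thm:classical-absorb} (meromorphic once invertibility is known for some $\sigma$, provided by Theorem~\ref{thm:classical-absorb-sector}), and then show that this is the meromorphic continuation of the $L^2_{g_0}$-resolvent with the asserted mapping and estimates. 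Since $\sigma\mapsto x^{\imath\sigma}$ is entire, the conjugating weights contribute no poles, so $\cR(\sigma)$ inherits meromorphy from $R(\sigma)$, and its residues are finite rank because those of $R(\sigma)$ are (analytic Fredholm theory), composition with bounded weight operators and restriction to $X_{0,\even}$ preserving finite rank.

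The first substantive step is to check that $\cR(\sigma)$ as defined really is the resolvent. For $f\in\dCI(X_0)$ the function $\tilde f$ lies in $\dCI(X)$ and is supported in $\{\mu>0\}$, so $u:=x^{(\dimnm)/2}e^{-\imath\sigma\phi}(R(\sigma)\tilde f)|_{X_{0,\even}}$ solves $(\Delta_{g_0}-(\dimnm)^2/4-\sigma^2)u=f$ on $X_0^\circ$; by the regularity theory for $P_\sigma-\imath Q_\sigma$ (elliptic regularity, propagation of singularities, and the radial-point Proposition~\ref{prop:micro-out}), $R(\sigma)\tilde f\in H^s$ for all $s$ in the admissible range, hence is conormal at $\pa X_{0,\even}$, so $u$ has boundary behaviour $x^{(\dimnm)/2-\imath\sigma}$ times a function smooth in $x^2$; in particular $u\in\dist(X_0)$. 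For $\im\sigma\gg0$ the operator $\Delta_{g_0}-(\dimnm)^2/4-\sigma^2$ is invertible on $L^2_{g_0}$, and the decay of $u$ shows $u$ is the $L^2$-solution, so $\cR(\sigma)$ agrees with the genuine resolvent there; meromorphy then propagates the identification, and since $\Cx_s\uparrow\Cx$ as $s\to\infty$ one obtains $\cR(\sigma):\dCI(X_0)\to\dist(X_0)$ on all of $\Cx$. I would also record here that $\cR(\sigma)$ is independent of the auxiliary choices ($Q_\sigma$, and the extensions of $h$, $\gamma$, $\phi$ past $\pa X_0$): by the global dynamics of Subsection~\ref{subsec:global-char}, every bicharacteristic of $p$ in $\Sigma_\pm$ either enters $\elliptic(Q_\sigma)$ or tends to $L_\pm$, so propagation of singularities together with the radial-point estimates controls $R(\sigma)\tilde f$ over $\overline{X_{0,\even}}$ in terms of $\tilde f$ over $X_{0,\even}$ alone; changing the auxiliary data alters $R(\sigma)\tilde f$ only away from $X_{0,\even}$. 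In particular the poles of $\cR(\sigma)$ are precisely those poles of $R(\sigma)$ whose resonant states do not vanish on $X_{0,\even}$.

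The non-trapping estimate is then Theorem~\ref{thm:classical-absorb-strip} fed through this dictionary. With $\diffordm=1$, for $-C<\im\sigma<\delta_0|\re\sigma|$, $|\re\sigma|$ large, and $s>\tfrac12+C$ (so that the relevant $\sigma$ lie in $\Cx_s$ for the radial-point threshold), that theorem gives $\|R(\sigma)\tilde f\|_{H^s_{|\sigma|^{-1}}(X)}\le\tilde C|\sigma|^{-1}\|\tilde f\|_{H^{s-1}_{|\sigma|^{-1}}(X)}$; restricting the left side to $X_{0,\even}$ only decreases the norm, while $\tilde f$ is supported in one of the two copies of $X_{0,\even}$ inside the double $X$, so its $H^{s-1}_{|\sigma|^{-1}}(X)$ norm equals its $H^{s-1}_{|\sigma|^{-1}}(X_{0,\even})$ norm, and \eqref{eq:non-trap-conf-comp} follows on unwinding the weights. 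If $f$ is compactly supported in $X_0^\circ$, then $\tilde f$ is supported in $\{\mu>0\}$, away from $L_\pm$ and where $P_\sigma-\imath Q_\sigma$ is classically elliptic ($\Sigma\subset\{\mu\le0\}$); microlocal elliptic regularity then gains two derivatives on the high-frequency part of $\tilde f$, while at bounded frequencies the $H^{s-1}_{|\sigma|^{-1}}$ and $H^{s-2}_{|\sigma|^{-1}}$ norms are comparable, so the $s-1$ norm of $\tilde f$ may be replaced by the $s-2$ norm. Finally, the semiclassically outgoing claim is Theorem~\ref{thm:semicl-outg} transported back: for $f$ compactly supported in $X_0^\circ$, $\tilde f$ has compact support in $\{\mu>0\}$, disjoint from $L_+\cup L_-$, so the extension of $R_{h,z}$ to such data applies; the multipliers $x^{\pm(\dimnm)/2}e^{\mp\imath\sigma\phi}$ are smooth, elliptic semiclassical operators of order $0$ over compact subsets of $X_0^\circ$, hence do not move semiclassical wavefront sets, and Theorem~\ref{thm:semicl-outg} gives $\alpha\notin\WFh^{s,-1}(\cR(h^{-1}z)f)$ whenever $\WFh^{s-1,0}(f)$ is disjoint from the backward bicharacteristic from $\alpha$.

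I expect the main obstacle to be the identification-and-independence step: checking carefully that the restriction to $X_{0,\even}$ of $R(\sigma)\tilde f$, with the weights removed, is genuinely the analytic continuation of the $L^2_{g_0}$-resolvent — that it has the correct conormal boundary behaviour so that it maps $\dCI(X_0)$ to $\dist(X_0)$, that it does not depend on $Q_\sigma$ or on the extensions across $\pa X_0$, and that only the true resonances survive as poles (the extra resonances of $R(\sigma)$ having resonant states supported outside $X_{0,\even}$, which the restriction kills). The remaining steps amount to bookkeeping with the conjugating weights and direct quotation of the microlocal estimates of Section~\ref{sec:microlocal}.
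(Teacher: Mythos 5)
Your proposal is correct in its essentials and follows the paper's proof closely: you set up the same conjugation identity, define $\cR(\sigma)$ by sandwiching $(P_\sigma-\imath Q_\sigma)^{-1}$ between the weight factors and restricting to $X_{0,\even}$ (the paper's formula \eqref{eq:hyp-res-repr}), identify the result with the genuine $L^2_{g_0}$-resolvent for $\im\sigma\gg 0$ via self-adjointness and uniqueness, and then quote Theorems~\ref{thm:classical-absorb}, \ref{thm:classical-absorb-strip} and \ref{thm:semicl-outg} for meromorphy, the high-energy estimate, and the semiclassically outgoing property. Two details are worth tightening. First, you extend $\tilde f$ across $\pa X_{0,\even}$ by zero, while the paper uses a bounded extension operator $E_{s-1}:H^{s-1}(X_{0,\even})\to H^{s-1}(X)$; your assertion that the $H^{s-1}_{|\sigma|^{-1}}(X)$ norm of the zero extension \emph{equals} the $H^{s-1}_{|\sigma|^{-1}}(X_{0,\even})$ norm is not correct (and, more to the point, the inequality one actually needs --- that zero extension is bounded from $H^{s-1}(X_{0,\even})$ into $H^{s-1}(X)$ --- fails for $s-1\ge\tfrac12$), so \eqref{eq:non-trap-conf-comp} should be derived using $E_{s-1}$; this costs nothing since the restriction $R(\sigma)\tilde f|_{X_{0,\even}}$ is extension-independent. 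Second, your argument that $\cR(\sigma)$ is independent of $Q_\sigma$ and the extensions ``by propagation of singularities together with the radial-point estimates'' does not work as stated: those results control wavefront sets (regularity), not the support or values of $R(\sigma)\tilde f$. The independence is instead a consequence of the $L^2$-identification at $\im\sigma\gg 0$ plus meromorphy --- which you also invoke --- while the finer statement that the extra resonant states of $(P_\sigma-\imath Q_\sigma)^{-1}$ are supported in $\mu<c<0$ is obtained in the paper via a Carleman-type estimate (quoted from \cite[Proposition~5.3]{Vasy:De-Sitter}), not from propagation.
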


We remark that although in order
to go through without changes, our methods require the
evenness property, it is not hard to deduce more restricted results without
this. Essentially one would have operators with coefficients that have a conormal
singularity at the event horizon; as long as this is sufficiently mild relative
to what is required for the analysis, it does not affect the results. The problems
arise for the analytic continuation, when one needs strong function spaces
($H^s$ with $s$ large); these are not preserved when one multiplies by the
singular coefficients.

\begin{proof}
All of the results of Section~\ref{sec:microlocal} apply.

By self-adjointness and positivity of $\Delta_{g_0}$ and as
$\dCI(X_0)$ is in its domain,
$$
\left(\Delta_{g_0}-\sigma^2-\left(\frac{\dimnm}{2}\right)^2\right) u=f\in \dCI(X_0)
$$
has a unique solution $u=\cR(\sigma)f\in L^2(X_0,|dg_0|)$ when
when $\im\sigma\gg0$.
On the other hand, let $\phi$ be as in
Subsection~\ref{subsec:Laplacian-extension},
so $e^\phi=\mu^{1/2}(1+\mu)^{-1/4}$ near $\mu=0$ (so $e^\phi\sim x$ there),
$\tilde f_0=e^{\imath\sigma\phi}x^{-\dimnppar/2}x^{-1} f$
in $\mu\geq 0$, and $\tilde f_0$ still
vanishes to infinite order at $\mu=0$. Let $\tilde f$ be an arbitrary smooth
extension of $\tilde f_0$ to the compact manifold $X$ on which
$P_{\sigman}-\imath Q_{\sigman}$ is defined.
Let $\tilde u=(P_{\sigman}-\imath Q_{\sigman})^{-1}\tilde f$,
with $(P_{\sigman}-\imath Q_{\sigman})^{-1}$ given by our results in
Section~\ref{sec:microlocal}; this satisfies
$(P_{\sigman}-\imath Q_{\sigman})\tilde u=\tilde f$ and $\tilde u\in\CI(X)$.
Thus, $u'=e^{-\imath\sigma\phi} x^{\dimnppar/2}x ^{-1} \tilde u|_{\mu>0}$ satisfies
$u'\in x^{(\dimnm)/2}e^{-\imath\sigma\phi}\CI(X_0)$, and
$$
\left(\Delta_{g_0}-\sigma^2-\left(\frac{\dimnm}{2}\right)^2\right)u'=f
$$
by \eqref{eq:ah-final-conj-form} and \eqref{eq:P-semicl-form}
(as $Q_\sigma$ is supported in $\mu<0$).
Since $u'\in L^2(X_0,|dg_0|)$ for $\im\sigma>0$,
by the aforementioned
uniqueness, $u=u'$.

To make the extension from $X_{0,\even}$ to $X$ more systematic,
let $E_s:H^s(X_{0,\even})\to H^s(X)$ be a continuous extension operator,
$R_s:H^s(X)\to H^s(X_{0,\even})$ the restriction map. Then, as we have just
seen, for $f\in\dCI(X_0)$,
\begin{equation}\label{eq:hyp-res-repr}
\cR(\sigma)f=e^{-\imath\sigma\phi} x^{\dimnppar/2}x ^{-1}
R_s(P_{\sigman}-\imath Q_{\sigman})^{-1}E_{s-1}
e^{\imath\sigma\phi}x^{-\dimnppar/2}x^{-1} f.
\end{equation}
While, for the sake of simplicity, $Q_{\sigman}$ is constructed in
Subsection~\ref{subsec:complex-absorb-dS} in such a manner that it is
not holomorphic in all of $\im\sigma>-C$ due to a cut in the upper
half plane, this cut can be moved outside any fixed compact subset, so
taking into account that $\cR(\sigma)$ is independent of the choice of $Q_{\sigman}$,
the theorem
follows immediately from the
results
of Section~\ref{sec:microlocal}.
\end{proof}

Our argument proves that every pole of
$\cR(\sigma)$ is a pole of $(P_\sigma-\imath Q_\sigma)^{-1}$ (for otherwise
\eqref{eq:hyp-res-repr}
would show $\cR(\sigma)$ does not have a pole either),
but it is possible for $(P_\sigma-\imath Q_\sigma)^{-1}$ to have poles which are not poles of
$\cR(\sigma)$. However, in the latter case, the Laurent coefficients of
$(P_\sigma-\imath Q_\sigma)^{-1}$ would be annihilated by multiplication by $R_s$ from the
left, i.e.\ the resonant states (which are smooth) would be supported in $\mu\leq 0$,
in particular vanish to infinite order at $\mu=0$.

In fact, a stronger statement can be made: by a calculation completely analogous
to what we just performed, we can easily see that in $\mu<0$, $P_\sigma$
is a conjugate (times a power of $\mu$) of a
Klein-Gordon-type operator on $\dimnpar$-dimensional de Sitter space
with $\mu=0$ being the boundary (i.e.\ where time goes to infinity). Thus,
if $\sigma$ is not a pole of $\cR(\sigma)$ and
$(P_\sigma-\imath Q_\sigma)\tilde u=0$ then one would have a
solution $u$ of this Klein-Gordon-type equation near $\mu=0$, i.e.\ infinity,
that rapidly vanishes at infinity. It is shown
in \cite[Proposition~5.3]{Vasy:De-Sitter} by a Carleman-type estimate that this
cannot happen; although there $\sigma^2\in\RR$ is assumed, the argument given
there goes through almost verbatim in general. Thus, if $Q_\sigma$ is supported
in $\mu<c$, $c<0$, then $\tilde u$ is also supported in $\mu<c$. This argument
can be iterated for Laurent coefficients of higher order poles; their range (which
is finite dimensional) contains only functions supported
in $\mu<c$.

\begin{rem}\label{rem:ah-decay}
We now return to our previous remarks regarding the fact that our solution disallows
the conormal singularities $(\mu\pm i0)^{\imath\sigman}$ from the perspective
of conformally compact spaces of dimension $\dimn$. Recalling that
$\mu=x^2$, the
two indicial roots on these spaces
correspond to the asymptotics $\mu^{\pm\imath\sigman/2+(\dimnm)/4}$ in $\mu>0$.
Thus for the operator
$$
\mu^{-1/2}\mu^{\imath\sigman/2-\dimnppar/4}
(\Delta_{g_0}-\frac{(\dimnm)^2}{4}-\sigman^2)
\mu^{-\imath\sigman/2+\dimnppar/4}\mu^{-1/2},
$$
or indeed $P_\sigma$, they correspond to
$$
\left(\mu^{-\imath\sigman/2+\dimnppar/4}\mu^{-1/2}\right)^{-1}\mu^{\pm\imath\sigman/2+(\dimnm)/4}=\mu^{\imath\sigman/2\pm\imath\sigman/2}.
$$
Here the indicial root $\mu^0=1$ corresponds to the smooth solutions
we construct for $P_\sigma$, while $\mu^{\imath\sigman}$ corresponds
to the conormal behavior we rule out. Back to the original Laplacian, thus,
$\mu^{-\imath\sigman/2+(\dimnm)/4}$ is the allowed asymptotics
and $\mu^{\imath\sigman/2+(\dimnm)/4}$ is the disallowed one. Notice that $\re\imath\sigma
=-\im\sigma$, so the disallowed solution is growing at $\mu=0$
relative to the allowed one, as expected in the physical half plane, and the
behavior reverses when $\im\sigma<0$. Thus, in the original asymptotically
hyperbolic picture one has to distinguish two different rates of growths, whose
relative size changes.
On the other hand, in our approach, we rule out the singular solution and allow
the non-singular (smooth one), so there is no change in behavior at all
for the analytic continuation.
\end{rem}

\begin{rem}\label{rem:asymp-dS}
For {\em even} asymptotically de Sitter metrics on an $\dimnpar$-dimensional
manifold $X'_0$ with boundary, the methods
for asymptotically hyperbolic spaces work, except $P_\sigma-\imath Q_\sigma$
and $P_\sigma^*+\imath Q_\sigma^*$ switch roles, which does not affect
Fredholm properties, see Remark~\ref{rem:dual-Fredholm}.
Again, evenness means that
we may choose a product decomposition near the boundary such that
\begin{equation}\label{eq:dS-g-0-prod}
g_0=\frac{dx^2-h}{x^2}
\end{equation}
there,
where $h$ is an even family of Riemannian metrics; as above, we take $x$ to be a
globally defined boundary defining function. Then with $\mut=x^2$, so $\mut>0$
is the Lorentzian region, $\overline{\sigma}$ in place of $\sigma$ (recalling that
our aim is to get to $P_\sigma^*+\imath Q_\sigma^*$) the above calculations for
$\Box_{g_0}-\frac{(\dimnm)^2}{4}-\overline{\sigma}^2$ in place of
$\Delta_{g_0}-\frac{(\dimnm)^2}{4}-\sigma^2$ leading to
\eqref{eq:ah-prefinal-conj-form} all go through with $\mu$ replaced by $\mut$,
$\sigma$ replaced by $\overline{\sigma}$ and
$\Delta_h$ replaced by $-\Delta_h$. Letting $\mu=-\mut$, and conjugating by
$(1+\mu)^{\imath\overline{\sigma}/4}$ as above, yields
\begin{equation}\begin{split}\label{eq:dS-final-conj-form}
&-4\mu D_\mu^2+4\overline{\sigman} D_\mu+\overline{\sigman}^2-\Delta_h
+4\imath D_\mu+2\imath\gamma (\mu D_{\mu}-\overline{\sigman}/2-\imath (\dimnm)/4),
\end{split}\end{equation}
modulo terms that can be absorbed into the error terms in
operators in the class \eqref{eq:ah-final-conj-form}, i.e.\ this is indeed
of the form $P_\sigma^*+\imath Q_\sigma^*$ in the framework of
Subsection~\ref{subsec:complex-absorb-dS}, at least near $\mut=0$. If now $X'_0$
is extended to a manifold without boundary in such a way that in $\mut<0$,
i.e.\ $\mu>0$, one has a classically elliptic, semiclassically
non-trapping problem, then all the results of
Section~\ref{sec:microlocal} are applicable.
\end{rem}



\def\cprime{$'$} \def\cprime{$'$}

\end{document}